\newtheorem{theorem}{Theorem}[section]
\newtheorem{lemma}[theorem]{Lemma}
\newtheorem{proposition}[theorem]{Proposition}
\newtheorem{corollary}[theorem]{Corollary}
\theoremstyle{definition}
\newtheorem{example}[theorem]{Example}
\newtheorem{algorithm}[theorem]{Algorithm}
\theoremstyle{remark}
\newtheorem{remark}[theorem]{Remark}
\numberwithin{equation}{section}
\begin{document}

%\title[$R$-varieties in numerical semigroups]{Frobenius restricted varieties in numerical semigroups}
\title{Frobenius restricted varieties in numerical semigroups}

\author{Aureliano M. Robles-P\'erez\thanks{Both authors are supported by the project MTM2014-55367-P, which is funded by Mi\-nis\-terio de Econom\'{\i}a y Competitividad and Fondo Europeo de Desarrollo Regional FEDER, and by the Junta de Andaluc\'{\i}a Grant Number FQM-343. The second author is also partially supported by Junta de Andaluc\'{\i}a/Feder Grant Number FQM-5849.} \thanks{Departamento de Matem\'atica Aplicada, Universidad de Granada, 18071-Granada, Spain. \newline E-mail: \textbf{arobles@ugr.es}}
	\mbox{ and} Jos\'e Carlos Rosales$^*$\thanks{Departamento de \'Algebra, Universidad de Granada, 18071-Granada, Spain. \newline E-mail: \textbf{jrosales@ugr.es}} }

\date{ }

\maketitle

\begin{abstract}
	The common behaviour of many families of numerical semigroups led up to defining, firstly, the Frobenius varieties and, secondly, the (Frobenius) pseudo-varieties. However, some interesting families are still out of these definitions. To overcome this situation, here we introduce the concept of Frobenius restricted variety (or $R$-variety). We will generalize most of the results for varieties and pseudo-varieties to $R$-varieties. In particular, we will study the tree structure that arise within them.
\end{abstract}
\noindent \textbf{Keywords:} $R$-varieties; Frobenius restricted number; varieties; pseudo-varieties; monoids; numerical semigroups; tree (associated to an $R$-variety).

\medskip

\noindent{\it 2010 AMS Classification:} 20M14

\section{Introduction}
In \cite{variedades}, the concept of (Frobenius) variety was introduced in order to unify several results which have appeared in \cite{patterns}, \cite{systems}, \cite{arf}, and \cite{saturated}. Moreover, the work made in \cite{variedades} has allowed to study other notables families of numerical semigroups, such as those that appear in \cite{frases}, \cite{benefits}, \cite{digitales}, and \cite{brazaletes}.

There exist families of numerical semigroups which are not varieties but have a similar structure. For example, the family of numerical semigroups with maximal embedding dimension and fixed multiplicity (see \cite{med}). The study of this family, in \cite{bagsvo}, led to the concept of $m$-variety.

In order to generalize the concepts of variety and $m$-variety, in \cite{pseudovar} were introduced  the (Frobenius) pseudo-varieties. Moreover, recently, the results obtained in \cite{pseudovar} allowed us to study several interesting families of numerical semigroups (for instance, see \cite{incentivos}).

In this work, our aim will be to introduce and study the concept of $R$-variety (that is, Frobenius restricted variety). We will see how it generalizes the concept of pseudo-variety and we will show that there exist significant families of numerical semigroups which are $R$-varieties but not pseudo-varieties.

Let ${\mathbb N}$ be the set of nonnegative integers. A \emph{numerical semigroup} is a subset $S$ of ${\mathbb N}$ such that it is closed under addition, contains the zero element, and ${\mathbb N} \setminus S$ is finite.

It is well known (see \cite[Lemma~4.5]{springer}) that, if $S$ and $T$ are numerical semigroups such that $S \varsubsetneq T$, then $S \cup \{\max(T\setminus S)\}$ is another numerical semigroup. We will denote by ${\mathrm F}_T(S)=\max(T\setminus S)$ and we will call it as the \emph{Frobenius number of $S$ restricted to $T$}.

An \emph{$R$-variety} is a non-empty family  ${\mathcal R}$ of numerical semigroups that fulfills the following conditions.
\begin{enumerate}
	\item ${\mathcal R}$ has a maximum element with respect to the inclusion order (that we will denote by $\Delta({\mathcal R})$).
	\item If $S,T \in {\mathcal R}$, then $S\cap T\in {\mathcal R}$.
	\item If $ S\in {\mathcal R}$ and $S\neq \Delta({\mathcal R})$, then $S\cup \{{\mathrm F}_{\Delta({\mathcal R})}(S)\} \in {\mathcal R}$.
\end{enumerate}

In Section~\ref{allvarieties} we will see that every pseudo-variety is an $R$-variety. Moreover, we will show that, if ${\mathcal V}$ is a variety and $T$ is a numerical semigroup, then ${\mathcal V}_T=\{S \cap T \mid S \in {\mathcal V} \}$ is an $R$-variety. In fact, we will prove that every $R$-variety is of this form.

Let ${\mathcal R}$ be an $R$-variety and let $M$ be a submonoid of $({\mathbb N},+)$. We will say that $M$ is an \emph{${\mathcal R}$-monoid} if it can be expressed as intersection of elements of ${\mathcal R}$. It is clear that the intersection of ${\mathcal R}$-monoids is another ${\mathcal R}$-monoid and, therefore, we can define the ${\mathcal R}$-monoid generated by a subset of $\Delta({\mathcal R})$. In Section~\ref{r-monoids} we will show that every ${\mathcal R}$-monoid admits a unique minimal ${\mathcal R}$-system of generators. In addition, we will see that, if $M$ is an ${\mathcal R}$-monoid and $x\in M$, then $M\setminus \{x\}$ is another ${\mathcal R}$-monoid if and only if $x$ belongs to the minimal ${\mathcal R}$-system of generators of $M$.

In Section~\ref{tree} we will show that the elements of an $R$-variety, ${\mathcal R}$, can be arranged in a tree with root $\Delta({\mathcal R})$. Moreover, we will prove that the set of children of a vertex $S$, of such a tree, is equal to $\left\{S\setminus\{x\} \mid x \right.$ is an element of the minimal ${\mathcal R}$-system of generators of $S$ and $\left. x>{\mathrm F}_{\Delta({\mathcal R})}(S) \right\}$. This fact will allow us to show an algorithmic process in order to recurrently build the elements of an $R$-variety.

Finally, in Section~\ref{r-var-fns} we will see that, in general and contrary to what happens with varieties and pseudo-varieties, we cannot define the smallest $R$-variety that contains a given family ${\mathcal F}$ of numerical semigroups. Nevertheless, we will show that, if $\Delta$ is a numerical semigroup such that $S\subseteq \Delta$ for all $S\in {\mathcal F}$, then there exists the smallest $R$-variety (denoted by ${\mathcal R}({\mathcal F},\Delta)$) containing ${\mathcal F}$ and having $\Delta$ as maximum (with respect the inclusion order). Moreover, we will prove that ${\mathcal R}({\mathcal F},\Delta)$ is finite if and only if ${\mathcal F}$ is finite. In such a case, that fact will allow us to compute, for a given ${\mathcal R}({\mathcal F},\Delta)$-monoid, its minimal ${\mathcal R}({\mathcal F},\Delta)$-system of generators. In this way, we will obtain an algorithmic process to determine all the elements of ${\mathcal R}({\mathcal F},\Delta)$ by starting from ${\mathcal F}$ and $\Delta$.

Let us observe that the proofs, of some results of this work, are similar to the proofs of the analogous results for varieties and pseudo-varieties. However, in order to get a self-contained paper, we have not omitted several of such proofs.

\section{Varieties, pseudo-varieties, and $R$-varieties}\label{allvarieties}

It is said that $M$ is a \emph{submonoid} of $({\mathbb N},+)$ if $M$ is a subset of ${\mathbb N}$ which is closed for the addition and such that $0\in M$. It particular, if $S$ is a submonoid of $({\mathbb N},+)$ such that ${\mathbb N} \setminus S$ is finite, then $S$ is a numerical semigroup.

Let $A$ be a non-empty subset of ${\mathbb N}$. Then it is denoted by $\langle A \rangle$ the submonoid of $({\mathbb N}, +)$ generated by $A$, that is,
$$\langle A \rangle = \{ \lambda_1 a_1+ \cdots +\lambda_n a_n \mid n \in {\mathbb N} \setminus \{0\}, \; a_1,\ldots,a_n \in A ,\; \lambda_1,\ldots,\lambda_n \in {\mathbb N} \}.$$
It is well known (see for instance \cite[Lemma~2.1]{springer}) that $\langle A \rangle$ is a numerical semigroup if and only if $\gcd(A) =1$.

Let $M$ be a submonoid of $({\mathbb N},+)$ and let $A\subseteq {\mathbb N}$. If $M= \langle A \rangle$, then it is said that $A$ is a \emph{system of generators} of $M$. Moreover, it is said that $A$ is a \emph{minimal system of generators} of $M$ if $M \not= \langle B \rangle$ for all $B \varsubsetneq A$. It is a classical result that every submonoid $M$ of $({\mathbb N},+)$ has a unique minimal system of generators (denoted by $\mathrm{msg}(M)$) which, in addition, is finite (see for instance \cite[Corollary~2.8]{springer}).

Let $S$ be a numerical semigroup. Being that ${\mathbb N} \setminus S$ is finite, it is possible to define several notable invariants of $S$. One of them is the \emph{Frobenius number} of $S$ (denoted by ${\mathrm F}(S)$) which is the greatest integer that does not belong to $S$  (see \cite{alfonsin}). Another one is the \emph{genus} of $S$ (denoted by ${\mathrm g}(S)$) which is the cardinality of ${\mathbb N} \setminus S$.

Let $S$ be a numerical semigroup different from ${\mathbb N}$. Then it is obvious that $S \cup \{{\mathrm F}(S)\}$ is also a numerical semigroup. Moreover, from \cite[Proposition~7.1]{springer}, we have that $T$ is a numerical semigroup with ${\mathrm g}(T)=g+1$ if and only if there exist a numerical semigroup $S$ and $x \in \mathrm{msg}(S)$ such that ${\mathrm g}(S)=g$, $x>{\mathrm F}(S)$, and $T=S \setminus \{x\}$. This result is the key to build the set of all numerical semigroups with genus $g+1$ when we have the set of all numerical semigroups with genus $g$ (see \cite[Proposition~7.4]{springer}).

In \cite{variedades} it was introduced the concept of \emph{(Frobenius) variety} in order to generalize the previous situation to some relevant families of numerical semigroups.

It is said that a non-empty family of numerical semigroups ${\mathcal V}$ is a \emph{(Frobenius) variety} if the following conditions are verified.
\begin{enumerate}
	\item If $S,T \in {\mathcal V}$, then $S\cap T\in {\mathcal V}$.
	\item If $ S\in {\mathcal V}$ and $S\neq {\mathbb N}$, then $S\cup \{{\mathrm F}(S)\} \in {\mathcal V}$.
\end{enumerate}

However, there exist families of numerical semigroups that are not varieties, but have a very similar behavior. By studying these families of numerical semigroups, we introduced in \cite{pseudovar} the concept of \emph{(Frobenius) pseudo-variety}.

It is said that a non-empty family of numerical semigroups ${\mathcal P}$ is a \emph{(Frobenius) pseudo-variety} if the following conditions are verified.
\begin{enumerate}
	\item ${\mathcal P}$ has a maximum element with respect to the inclusion order (that we will denote by $\Delta({\mathcal P})$).
	\item If $S,T \in {\mathcal P}$, then $S\cap T\in {\mathcal P}$.
	\item If $ S\in {\mathcal P}$ and $S\neq \Delta({\mathcal P})$, then $S\cup \{{\mathrm F}(S)\} \in {\mathcal P}$.
\end{enumerate}

From the definitions, it is clear that every variety is a pseudo-variety. Moreover, as a consequence of \cite[Proposition~1]{pseudovar}, we have the next result.

\begin{proposition}\label{prop1}
	Let ${\mathcal P}$ be a pseudo-variety. Then ${\mathcal P}$ is a variety if and only if ${\mathbb N} \in {\mathcal P}$.
\end{proposition}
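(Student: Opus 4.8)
The plan is to prove the two implications separately, relying in both directions on two elementary observations: that ${\mathbb N}$ is the (unique) largest numerical semigroup, being contained in no proper one and containing every numerical semigroup; and that in condition (3) of a pseudo-variety the number adjoined is the ordinary Frobenius number ${\mathrm F}(S)$.

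For the implication ${\mathbb N} \in {\mathcal P} \Rightarrow {\mathcal P}$ is a variety, I would first note that, since $\Delta({\mathcal P})$ is the maximum of ${\mathcal P}$ under inclusion and every numerical semigroup is contained in ${\mathbb N}$, the hypothesis ${\mathbb N} \in {\mathcal P}$ forces $\Delta({\mathcal P}) = {\mathbb N}$. With this identification, the defining condition (3) of a pseudo-variety reads: if $S \in {\mathcal P}$ and $S \neq {\mathbb N}$, then $S \cup \{{\mathrm F}(S)\} \in {\mathcal P}$, which is exactly condition (2) in the definition of a variety. Condition (2) of a pseudo-variety (closure under intersection) is literally condition (1) of a variety. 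Hence ${\mathcal P}$ satisfies both variety axioms and is a variety.

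For the converse, suppose ${\mathcal P}$ is a variety; I want to produce ${\mathbb N} \in {\mathcal P}$. Since ${\mathcal P}$ is non-empty, I fix any $S \in {\mathcal P}$. If $S = {\mathbb N}$ we are done; otherwise $S \neq {\mathbb N}$, and the variety axiom (2) gives $S \cup \{{\mathrm F}(S)\} \in {\mathcal P}$. The key quantitative remark is that adjoining the Frobenius number strictly decreases the genus, since ${\mathrm F}(S) = \max({\mathbb N}\setminus S)$ is removed from the gap set and so ${\mathrm g}(S \cup \{{\mathrm F}(S)\}) = {\mathrm g}(S) - 1$. Iterating this construction produces a chain in ${\mathcal P}$ whose genera strictly decrease; as the genus is a nonnegative integer, after exactly ${\mathrm g}(S)$ steps the chain reaches a numerical semigroup of genus $0$, which can only be ${\mathbb N}$. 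Therefore ${\mathbb N} \in {\mathcal P}$.

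The argument is essentially bookkeeping once the identification $\Delta({\mathcal P}) = {\mathbb N}$ is in place, so I do not expect a genuine obstacle; the only point requiring care is the termination of the iteration in the converse, which is guaranteed by the finiteness of ${\mathbb N} \setminus S$ (equivalently, by the well-ordering of the genus). One could alternatively shortcut one direction by invoking the cited \cite[Proposition~1]{pseudovar}, but the self-contained genus-descent argument above keeps the proof independent of external statements.
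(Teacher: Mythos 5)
Your proof is correct. Note that the paper itself gives no proof of this proposition: it is quoted as a consequence of Proposition~1 of the cited paper on pseudo-varieties, so there is no in-paper argument to compare against. Your self-contained treatment is the standard one and is sound in both directions: the identification $\Delta({\mathcal P})={\mathbb N}$ (forced because ${\mathbb N}$ contains every numerical semigroup) turns the pseudo-variety axioms literally into the variety axioms, and the genus-descent iteration $S\varsubsetneq S\cup\{{\mathrm F}(S)\}\varsubsetneq\cdots$ terminates at ${\mathbb N}$ after exactly ${\mathrm g}(S)$ steps because each adjunction removes one gap. This is in fact the same chain construction the paper later formalizes as ${\mathrm C}(S)$ in Section~2, so your argument is fully consistent with the paper's machinery.
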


The following result asserts that the concept of $R$-variety generalizes the concept of pseudo-variety.

\begin{proposition}\label{prop2}
	Every pseudo-variety is an $R$-variety.
\end{proposition}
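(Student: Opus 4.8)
The plan is to verify the three defining conditions of an $R$-variety directly for a pseudo-variety ${\mathcal P}$, taking $\Delta({\mathcal P})$ as the candidate maximum. Conditions (1) and (2) in the definition of $R$-variety coincide verbatim with conditions (1) and (2) in the definition of pseudo-variety, so they require no work: ${\mathcal P}$ already has a maximum $\Delta({\mathcal P})$ with respect to inclusion and is closed under intersection. The entire content of the proof therefore lies in deducing condition (3) of an $R$-variety, which involves the restricted Frobenius number ${\mathrm F}_{\Delta({\mathcal P})}(S)$, from condition (3) of a pseudo-variety, which involves the ordinary Frobenius number ${\mathrm F}(S)$.

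The key observation I would isolate first is that, for every $S \in {\mathcal P}$ with $S \neq \Delta({\mathcal P})$, the two Frobenius numbers actually agree, i.e.\ ${\mathrm F}(S) = {\mathrm F}_{\Delta({\mathcal P})}(S)$. To see this, I would apply condition (3) of the pseudo-variety to obtain $S \cup \{{\mathrm F}(S)\} \in {\mathcal P}$. Since $\Delta({\mathcal P})$ is the maximum of ${\mathcal P}$ with respect to inclusion, this forces $S \cup \{{\mathrm F}(S)\} \subseteq \Delta({\mathcal P})$, and hence ${\mathrm F}(S) \in \Delta({\mathcal P})$. In particular ${\mathrm F}(S) \in \Delta({\mathcal P}) \setminus S$.

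With this in hand, the equality is immediate from the definitions of the two invariants as maxima over nested sets. On one side, $\Delta({\mathcal P}) \setminus S \subseteq {\mathbb N} \setminus S$ gives ${\mathrm F}_{\Delta({\mathcal P})}(S) = \max(\Delta({\mathcal P}) \setminus S) \leq \max({\mathbb N} \setminus S) = {\mathrm F}(S)$. On the other side, ${\mathrm F}(S) \in \Delta({\mathcal P}) \setminus S$ yields ${\mathrm F}(S) \leq \max(\Delta({\mathcal P}) \setminus S) = {\mathrm F}_{\Delta({\mathcal P})}(S)$. Hence the two coincide, and condition (3) of the pseudo-variety literally becomes condition (3) of an $R$-variety: $S \cup \{{\mathrm F}_{\Delta({\mathcal P})}(S)\} = S \cup \{{\mathrm F}(S)\} \in {\mathcal P}$.

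I expect the main (and essentially the only) subtlety to be this first step: recognizing that the maximality of $\Delta({\mathcal P})$ is exactly what constrains ${\mathrm F}(S)$ to lie inside $\Delta({\mathcal P})$, so that the restricted and unrestricted Frobenius numbers cannot differ on members of ${\mathcal P}$. Everything else is a routine comparison of maxima over the chain $\Delta({\mathcal P}) \setminus S \subseteq {\mathbb N} \setminus S$, after which the three conditions read off with no further computation.
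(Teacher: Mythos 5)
Your proof is correct and follows essentially the same route as the paper: both reduce condition (3) to the equality ${\mathrm F}(S) = {\mathrm F}_{\Delta({\mathcal P})}(S)$, proved by noting that $S \cup \{{\mathrm F}(S)\} \in {\mathcal P}$ forces ${\mathrm F}(S) \in \Delta({\mathcal P}) \setminus S$ and then comparing the two maxima. No gaps.
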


\begin{proof}
	Let ${\mathcal P}$ be a pseudo-variety. In order to prove that ${\mathcal P}$ is an $R$-variety, we have to show that, if $S\in {\mathcal P}$ and $S\not=\Delta({\mathcal P})$, then $S\cup\{{\mathrm F}_{\Delta({\mathcal P})}(S)\} \in {\mathcal P}$. Since ${\mathcal P}$ is a pseudo-variety, we know that $S\cup \{{\mathrm F}(S)\} \in {\mathcal P}$. Thus, to finish the proof, it is enough to see that ${\mathrm F}(S) = {\mathrm F}_{\Delta({\mathcal P})}(S)$. On the one hand, it is clear that ${\mathrm F}_{\Delta({\mathcal P})}(S) \leq {\mathrm F}(S)$. On the other hand, since $S\cup\{{\mathrm F}(S)\} \in {\mathcal P}$, then we have that ${\mathrm F}(S) \in \Delta({\mathcal P})$. Therefore, ${\mathrm F}(S) \in \Delta({\mathcal P})\setminus S$ and, consequently, ${\mathrm F}(S) \leq {\mathrm F}_{\Delta({\mathcal P})}(S)$.
\end{proof}

In the next example we see that there exist $R$-varieties that are not pseudo-varieties.

\begin{example}\label{exmp3}
	Let ${\mathcal R}$ be the set formed by all numerical semigroups which are contained in the numerical semigroup $\langle 5,7,9 \rangle$. It is clear that ${\mathcal R}$ is an $R$-variety. However, since $S=\langle 5,7,9 \rangle \setminus \{5\} \in {\mathcal R}$, $S\not= \Delta({\mathcal R}) = \langle 5,7,9 \rangle$, ${\mathrm F}(S)=13$, and $S\cup \{13\} \notin {\mathcal R}$, we have that ${\mathcal R}$ is not a pseudo-variety.
\end{example}

Generalizing the above example, we can obtain several $R$-varieties, most of which are not pseudo-varieties.

\begin{enumerate}
	\item Let $T$ be a numerical semigroup. Then ${\mathcal L}_T = \{S \mid S$ is a numerical semigroup and $S \subseteq T\}$ is an $R$-variety. Observe that ${\mathcal L}_T$ is the set formed by all numerical subsemigroups of $T$.
	\item Let $S_1$ and $S_2$ be two numerical semigroups such that $S_1 \subseteq S_2$. Then $[S_1,S_2]=\{S \mid S$ is a numerical semigroup and $S_1 \subseteq S \subseteq S_2\}$ is an $R$-variety.
	\item Let $T$ be a numerical semigroup and let $A \subseteq T$. Then ${\mathcal R}(A,T) = \{ S \mid S$ is a numerical semigroup and $A \subseteq S \subseteq T \}$ is an $R$-variety. Observe that both of the previous examples are particular cases of this one.
\end{enumerate}

\begin{remark}
	Let $p,q$ be relatively prime integers such that $1<p<q$. Let us take the numerical semigroups $S_1=\langle p,q \rangle$ and $S_2=\frac{S_1}{2}=\left\{ s\in{\mathbb N} \mid 2s \in S_1 \right\}$. In \cite{kunz, kunz-waldi}, Kunz and Waldi study the family of numerical semigroups $[S_1,S_2]$, which is an $R$-variety but not a pseudo-variety.
\end{remark}

The next result establishes when an $R$-variety is a pseudo-variety.

\begin{proposition}\label{prop4}
	Let ${\mathcal R}$ be an $R$-variety. Then ${\mathcal R}$ is a pseudo-variety if and only if ${\mathrm F}(S) \in \Delta({\mathcal R})$ for all $S \in {\mathcal R}$ such that $S \not= \Delta({\mathcal R})$.
\end{proposition}

\begin{proof}
	\emph{(Necessity.)} If ${\mathcal R}$ is a pseudo-variety and $S \in {\mathcal R}$ with $S \not= \Delta({\mathcal R})$, then $S \cup \{{\mathrm F}(S)\} \in {\mathcal R}$. Therefore, ${\mathrm F}(S) \in \Delta({\mathcal R})$.
	
	\emph{(Sufficiency.)} In order to show that ${\mathcal R}$ is a pseudo-variety, it will be enough to see that $S \cup \{{\mathrm F}(S)\} \in {\mathcal R}$ for all $S \in {\mathcal R}$ such that $S \not= \Delta({\mathcal R})$. For that, since ${\mathrm F}(S) \in \Delta({\mathcal R})$, then it is clear that ${\mathrm F}_{\Delta({\mathcal R})}(S) = {\mathrm F}(S)$ and, therefore, $S \cup \{{\mathrm F}(S)\} = S \cup \{{\mathrm F}_{\Delta({\mathcal R})}(S)\} \in {\mathcal R}$.
\end{proof}

An immediate consequence of Propositions~\ref{prop1} and \ref{prop4} is the following result.

\begin{corollary}\label{cor5}
	Let ${\mathcal R}$ be an $R$-variety. Then ${\mathcal R}$ is a variety if and only if ${\mathbb N} \in {\mathcal R}$.
\end{corollary}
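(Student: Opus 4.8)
The plan is to chain together Propositions~\ref{prop1} and~\ref{prop4}, passing from the notion of $R$-variety to that of pseudo-variety and then to that of variety. Throughout I will use two facts already recorded in the text: every variety is a pseudo-variety, and ${\mathbb N}$ is the largest numerical semigroup under inclusion.

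For the necessity I would start from the assumption that ${\mathcal R}$ is a variety. Since every variety is in particular a pseudo-variety, ${\mathcal R}$ is a pseudo-variety, and I may invoke the ``only if'' implication of Proposition~\ref{prop1} (a pseudo-variety that is a variety must contain ${\mathbb N}$) to conclude directly that ${\mathbb N}\in{\mathcal R}$.

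For the sufficiency I would assume ${\mathbb N}\in{\mathcal R}$. The first step is to note that, because every numerical semigroup is contained in ${\mathbb N}$, the element ${\mathbb N}$ is the maximum of ${\mathcal R}$ with respect to inclusion, that is $\Delta({\mathcal R})={\mathbb N}$. The second step is then almost automatic: for every $S\in{\mathcal R}$ with $S\neq\Delta({\mathcal R})={\mathbb N}$ the Frobenius number ${\mathrm F}(S)$ is a positive integer, hence ${\mathrm F}(S)\in{\mathbb N}=\Delta({\mathcal R})$. Thus the hypothesis of Proposition~\ref{prop4} is satisfied, so ${\mathcal R}$ is a pseudo-variety; and since ${\mathbb N}\in{\mathcal R}$, the ``if'' implication of Proposition~\ref{prop1} upgrades this to the conclusion that ${\mathcal R}$ is a variety.

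The argument is essentially bookkeeping, so I do not expect a genuine obstacle. The only point requiring a moment's care is the identification $\Delta({\mathcal R})={\mathbb N}$ in the sufficiency direction: one must observe that membership of ${\mathbb N}$ in ${\mathcal R}$ forces it to be the inclusion-maximum, so that the restricted Frobenius condition of Proposition~\ref{prop4} collapses to the trivially true statement ${\mathrm F}(S)\in{\mathbb N}$. Once this is in place the two propositions apply verbatim.
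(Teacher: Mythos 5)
Your proof is correct and follows exactly the route the paper intends: the corollary is stated there as an immediate consequence of Propositions~\ref{prop1} and~\ref{prop4}, and your chaining of the two (via the observation that ${\mathbb N}\in{\mathcal R}$ forces $\Delta({\mathcal R})={\mathbb N}$, which trivializes the hypothesis of Proposition~\ref{prop4}) is precisely the omitted bookkeeping. No issues.
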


Our next purpose, in this section, will be to show that to give an $R$-variety is equivalent to give a pair $({\mathcal V},T)$ where ${\mathcal V}$ is a variety and $T$ is a numerical semigroup. Before that we need to introduce some concepts and results.

Let $S$ be a numerical semigroup. Then we define recurrently the following sequence of numerical semigroups.
\begin{itemize}
	\item $S_0=S$,
	\item if $S_i\neq {\mathbb N}$, then $S_{i+1}=S_i\cup\{{\mathrm F}(S_i)\}$.
\end{itemize}

Since ${\mathbb N} \setminus S$ is a finite set with cardinality equal to ${\mathrm g}(S)$, then we get a finite chain of numerical semigroups $S=S_0 \varsubsetneq S_1\varsubsetneq \cdots \varsubsetneq S_{{\mathrm g}(S)}={\mathbb N}$. We will denote by ${\mathrm C}(S)$ the set $\{S_0,S_1,\ldots, S_{{\mathrm g}(S)} \}$ and will say that it is the \emph{chain of numerical semigroups associated to $S$}. If ${\mathcal F}$ is a non-empty family of numerical semigroups, then we will denote by ${\mathrm C}({\mathcal F})$ the set $\bigcup_{S \in {\mathcal F}}{\mathrm C}(S)$.

Let ${\mathcal F}$ be a non-empty family of numerical semigroups. We know that there exists the smallest variety containing ${\mathcal F}$ (see \cite{variedades}). Moreover, by \cite[Thoerem~4]{variedades}, we have the next result.

\begin{proposition}\label{prop6}
	Let ${\mathcal F}$ be a non-empty family of numerical semigroups. Then the smallest variety containing ${\mathcal F}$ is the set formed by all finite intersections of elements of ${\mathrm C}({\mathcal F})$.
\end{proposition}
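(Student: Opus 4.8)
The plan is to establish the two inclusions between the smallest variety ${\mathcal V}$ containing ${\mathcal F}$ and the family $\mathcal{A}$ consisting of all finite intersections of elements of ${\mathrm C}({\mathcal F})$. First I would prove that every variety containing ${\mathcal F}$ must contain $\mathcal{A}$, and then that $\mathcal{A}$ is itself a variety containing ${\mathcal F}$; together these force ${\mathcal V}=\mathcal{A}$.

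For the first inclusion, let $\mathcal{W}$ be any variety with ${\mathcal F}\subseteq\mathcal{W}$. Given $S\in{\mathcal F}$, the second variety axiom applied repeatedly yields $S_1=S\cup\{{\mathrm F}(S)\}\in\mathcal{W}$, then $S_2\in\mathcal{W}$, and so on, so the whole chain ${\mathrm C}(S)$ lies in $\mathcal{W}$. Hence ${\mathrm C}({\mathcal F})\subseteq\mathcal{W}$, and closure under finite intersections (first axiom) gives $\mathcal{A}\subseteq\mathcal{W}$. Taking $\mathcal{W}={\mathcal V}$ shows $\mathcal{A}\subseteq{\mathcal V}$.

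For the second inclusion it suffices to verify that $\mathcal{A}$ is a variety containing ${\mathcal F}$, since then minimality of ${\mathcal V}$ gives ${\mathcal V}\subseteq\mathcal{A}$. That ${\mathcal F}\subseteq\mathcal{A}$ is immediate, as each $S=S_0$ is a one-term intersection (this also makes $\mathcal{A}$ non-empty); each member of $\mathcal{A}$ is a numerical semigroup because a finite intersection of numerical semigroups is one; and closure of $\mathcal{A}$ under intersection is clear, since an intersection of two finite intersections of elements of ${\mathrm C}({\mathcal F})$ is again such an intersection. The main obstacle is the Frobenius axiom: given $M=T_1\cap\cdots\cap T_n$ with each $T_i\in{\mathrm C}({\mathcal F})$ and $M\neq{\mathbb N}$, I must write $M\cup\{{\mathrm F}(M)\}$ as a finite intersection of elements of ${\mathrm C}({\mathcal F})$.

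The key observation that unlocks this step is the following. Write $f={\mathrm F}(M)$, so that every integer larger than $f$ lies in $M$ and hence in each $T_i$. Fix an index $i$ with $f\notin T_i$. Then $f\in{\mathbb N}\setminus T_i$ gives ${\mathrm F}(T_i)\geq f$, while every integer exceeding $f$ already lies in $T_i$, forcing ${\mathrm F}(T_i)\leq f$; therefore ${\mathrm F}(T_i)=f$. Consequently $T_i\neq{\mathbb N}$, and the successor of $T_i$ in its defining chain is exactly $T_i\cup\{{\mathrm F}(T_i)\}=T_i\cup\{f\}$, which thus also belongs to ${\mathrm C}({\mathcal F})$. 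Now set $\tilde{T_i}=T_i$ when $f\in T_i$ and $\tilde{T_i}=T_i\cup\{f\}$ when $f\notin T_i$; in either case $\tilde{T_i}\in{\mathrm C}({\mathcal F})$, $f\in\tilde{T_i}$, and $T_i\subseteq\tilde{T_i}\subseteq T_i\cup\{f\}$. From $T_i\subseteq\tilde{T_i}$ together with $f\in\tilde{T_i}$ for all $i$ we obtain $\bigcap_i\tilde{T_i}\supseteq M\cup\{f\}$, while from $\tilde{T_i}\subseteq T_i\cup\{f\}$ and the elementary identity $\bigcap_i(T_i\cup\{f\})=(\bigcap_i T_i)\cup\{f\}=M\cup\{f\}$ we obtain the reverse inclusion. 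Hence $M\cup\{{\mathrm F}(M)\}=\bigcap_i\tilde{T_i}\in\mathcal{A}$, which completes the verification that $\mathcal{A}$ is a variety and therefore finishes the proof.
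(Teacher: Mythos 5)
Your proof is correct. The paper does not actually prove Proposition~\ref{prop6}; it imports it from \cite{variedades}. Your argument is essentially the standard one and, in particular, runs parallel to the paper's own proof of the analogous Theorem~\ref{thm43}: there the authors use Lemma~\ref{lem42} to see that ${\mathrm F}_\Delta(S_i)\leq {\mathrm F}_\Delta(S)$ and then split into the cases ${\mathrm F}_\Delta(S_i)<{\mathrm F}_\Delta(S)$ (where adjoining ${\mathrm F}_\Delta(S)$ changes nothing) and ${\mathrm F}_\Delta(S_i)={\mathrm F}_\Delta(S)$ (where the enlarged semigroup is the successor in the chain), exactly as your dichotomy $f\in T_i$ versus $f\notin T_i$ with the observation that ${\mathrm F}(T_i)=f$ in the second case. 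So nothing is missing; your verification of the Frobenius axiom is the correct key step, and the two inclusions are handled as they should be.
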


Now, let ${\mathcal R}$ be an $R$-variety. By applying repeatedly that, if $S \in {\mathcal R}$ and $S \not= \Delta({\mathcal R})$, then $S \cup \{{\mathrm F}_{\Delta({\mathcal R})}(S)\} \in {\mathcal R}$, we get the following result.

\begin{lemma}\label{lem7}
	Let ${\mathcal R}$ be an $R$-variety. If $S \in {\mathcal R}$ and $n \in {\mathbb N}$, then $S \cup \{ x\in \Delta({\mathcal R}) \mid x \geq n \} \in {\mathcal R}$.
\end{lemma}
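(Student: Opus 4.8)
The plan is to build the set $S \cup \{x \in \Delta(\mathcal{R}) \mid x \geq n\}$ out of $S$ by repeatedly invoking condition (3) in the definition of an $R$-variety, each application of which adjoins the largest element of $\Delta(\mathcal{R})$ not yet present. Writing $\Delta = \Delta(\mathcal{R})$ and $U = \{x \in \Delta \mid x \geq n\}$ for brevity, the key observation is that condition (3) adds the elements of $\Delta \setminus S$ in strictly decreasing order, while the elements of $U$ that are missing from $S$ are exactly the largest ones (an upper segment of $\Delta \setminus S$); hence the target set is reached after finitely many steps. Note that $\Delta \setminus S \subseteq \mathbb{N} \setminus S$ is finite, which is what guarantees the process terminates.

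Concretely, I would argue by induction on $k = |\{x \in \Delta \setminus S \mid x \geq n\}|$, the number of elements of $U$ absent from $S$. If $k = 0$, then every element of $U$ already lies in $S$, so $S \cup U = S \in \mathcal{R}$ and there is nothing to prove; this also covers the case $S = \Delta$. For the inductive step, suppose $k \geq 1$. Then $\Delta \setminus S \neq \emptyset$, so $S \neq \Delta$ and $a := {\mathrm F}_\Delta(S) = \max(\Delta \setminus S)$ is defined; moreover $a \geq n$, since some element of $\Delta \setminus S$ is $\geq n$ and $a$ is the largest of them. By condition (3) we obtain $S' := S \cup \{a\} \in \mathcal{R}$.

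It then remains to relate $S'$ to $S \cup U$. Since $a \in \Delta$ and $a \geq n$, we have $a \in U$, whence $S' \cup U = S \cup U$; and $\{x \in \Delta \setminus S' \mid x \geq n\}$ has exactly $k - 1$ elements, being the previous set with $a$ removed. Applying the induction hypothesis to $S'$ yields $S' \cup U \in \mathcal{R}$, and therefore $S \cup U \in \mathcal{R}$, which closes the induction. I do not anticipate a genuine obstacle here: the only points that need care are checking that the top element $a$ is indeed $\geq n$ (so that adjoining it is a legitimate step toward $U$) and that the intermediate sets remain numerical semigroups lying in $\mathcal{R}$ — but the latter is automatic, since each $S'$ is produced by condition (3) itself.
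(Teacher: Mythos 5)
Your proof is correct and follows exactly the route the paper intends: the paper states Lemma~\ref{lem7} as an immediate consequence of ``applying repeatedly'' condition (3) of the definition of an $R$-variety, and your induction on the number of elements of $\{x\in\Delta({\mathcal R})\setminus S \mid x\geq n\}$ is just a careful formalization of that repeated application. The two points you flag as needing care (that ${\mathrm F}_{\Delta({\mathcal R})}(S)\geq n$ at each step, and that the process terminates because $\Delta({\mathcal R})\setminus S$ is finite) are indeed the only details, and you handle both correctly.
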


We are ready to show the announced result.

\begin{theorem}\label{thm8}
	Let ${\mathcal V}$ be a variety and let $T$ be a numerical semigroup. Then ${\mathcal V}_T = \{ S \cap T \mid S \in {\mathcal V}\}$ is an $R$-variety. Moreover, every $R$-variety is of this form.
\end{theorem}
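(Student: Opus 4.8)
The plan is to prove the two halves separately. For the first half, I would verify the three defining conditions of an $R$-variety for ${\mathcal V}_T$. Condition~1 is immediate once one observes that every variety contains ${\mathbb N}$ (apply the variety closure $S\mapsto S\cup\{{\mathrm F}(S)\}$ repeatedly to any element), so $T={\mathbb N}\cap T\in{\mathcal V}_T$; since $S\cap T\subseteq T$ for every $S\in{\mathcal V}$, this $T$ is the maximum, i.e. $\Delta({\mathcal V}_T)=T$. Condition~2 follows from $(S_1\cap T)\cap(S_2\cap T)=(S_1\cap S_2)\cap T$ together with closure of ${\mathcal V}$ under intersection.

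The only substantive point in the first half is Condition~3. Given $A=S\cap T\in{\mathcal V}_T$ with $A\neq T$, set $f={\mathrm F}_T(A)=\max(T\setminus A)$. Since a variety is an $R$-variety with maximum ${\mathbb N}$, Lemma~\ref{lem7} applied to ${\mathcal V}$ gives $S'=S\cup\{x\in{\mathbb N}\mid x\geq f\}\in{\mathcal V}$. I would then compute
$$S'\cap T=(S\cap T)\cup\{x\in T\mid x\geq f\}=A\cup\{x\in T\mid x\geq f\}.$$
Because $f=\max(T\setminus A)$, every element of $T$ strictly greater than $f$ already lies in $A$, so $\{x\in T\mid x\geq f\}=A\cup\{f\}$ up to the already-present elements, whence $S'\cap T=A\cup\{f\}=A\cup\{{\mathrm F}_T(A)\}\in{\mathcal V}_T$, as required.

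For the converse (every $R$-variety has this form), let ${\mathcal R}$ be an $R$-variety, put $T=\Delta({\mathcal R})$, and let ${\mathcal V}$ be the smallest variety containing ${\mathcal R}$, which by Proposition~\ref{prop6} is the set of all finite intersections of elements of ${\mathrm C}({\mathcal R})$. The inclusion ${\mathcal R}\subseteq{\mathcal V}_T$ is easy: any $R\in{\mathcal R}$ satisfies $R\subseteq\Delta({\mathcal R})=T$ and $R\in{\mathcal V}$, so $R=R\cap T\in{\mathcal V}_T$. For ${\mathcal V}_T\subseteq{\mathcal R}$, writing a generic element of ${\mathcal V}$ as $C_1\cap\cdots\cap C_k$ with $C_i\in{\mathrm C}({\mathcal R})$, one has $(C_1\cap\cdots\cap C_k)\cap T=\bigcap_i(C_i\cap T)$; since ${\mathcal R}$ is closed under intersection, it suffices to prove that $C\cap T\in{\mathcal R}$ for every $C\in{\mathrm C}({\mathcal R})$. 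Such a $C$ equals $S_j$ for some $S\in{\mathcal R}$ in the chain $S_0=S$, $S_{i+1}=S_i\cup\{{\mathrm F}(S_i)\}$, so I would induct on $j$. The base case is $S_0\cap T=S\in{\mathcal R}$. For the step, with $A=S_j\cap T\in{\mathcal R}$, I split on whether ${\mathrm F}(S_j)\in\Delta({\mathcal R})$: if not, then $S_{j+1}\cap T=A\in{\mathcal R}$; if so, then $S_{j+1}\cap T=A\cup\{{\mathrm F}(S_j)\}$ and $A\neq\Delta({\mathcal R})$.

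The crux, and the step I expect to be the main obstacle, is the identity ${\mathrm F}(S_j)={\mathrm F}_{\Delta({\mathcal R})}(A)$ in this last case, which lets me invoke Condition~3 for ${\mathcal R}$ to conclude $A\cup\{{\mathrm F}(S_j)\}=A\cup\{{\mathrm F}_{\Delta({\mathcal R})}(A)\}\in{\mathcal R}$. To see it, note ${\mathrm F}(S_j)\in\Delta({\mathcal R})$ and ${\mathrm F}(S_j)\notin S_j$ give ${\mathrm F}(S_j)\in\Delta({\mathcal R})\setminus A$, so ${\mathrm F}(S_j)\leq{\mathrm F}_{\Delta({\mathcal R})}(A)$; conversely ${\mathrm F}_{\Delta({\mathcal R})}(A)\in\Delta({\mathcal R})$ with ${\mathrm F}_{\Delta({\mathcal R})}(A)\notin S_j\cap\Delta({\mathcal R})$ forces ${\mathrm F}_{\Delta({\mathcal R})}(A)\notin S_j$, hence ${\mathrm F}_{\Delta({\mathcal R})}(A)\leq{\mathrm F}(S_j)$. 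Completing this induction finishes the converse and the theorem.
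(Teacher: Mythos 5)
Your proof is correct and follows essentially the same route as the paper's: the first half intersects $S\cup\{x\in{\mathbb N}\mid x\geq {\mathrm F}_T(S\cap T)\}\in{\mathcal V}$ with $T$, and the converse takes ${\mathcal V}$ to be the smallest variety containing ${\mathcal R}$ and reduces via Proposition~\ref{prop6} to showing $C\cap\Delta({\mathcal R})\in{\mathcal R}$ for $C\in{\mathrm C}({\mathcal R})$. The only difference is cosmetic: where the paper writes $C=S'\cup\{n,\to\}$ and cites Lemma~\ref{lem7}, you re-derive that instance by induction along the chain, proving the identity ${\mathrm F}(S_j)={\mathrm F}_{\Delta({\mathcal R})}(S_j\cap\Delta({\mathcal R}))$ that underlies Lemma~\ref{lem7} anyway.
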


\begin{proof}
	By Proposition~\ref{prop1}, we know that, if ${\mathcal V}$ is a variety, then ${\mathbb N} \in {\mathcal V}$ and, therefore, $T$ is the maximum of ${\mathcal V}_T$ (that is, $T=\Delta({\mathcal V}_T)$). On the other hand, it is clear that, if $S_1,S_2 \in {\mathcal V}_T$, then $S_1 \cap S_2 \in {\mathcal V}_T$.
	
	Now, let $S \in {\mathcal V}$ such that $S \cap T \not= T$ and let us have $t={\mathrm F}_T(S\cap T)$. In order to conclude that ${\mathcal V}_T$ is an $R$-variety, we will see that $(S\cap T) \cup \{t\} \in {\mathcal V}_T$. First, let us observe that $t=\max(T \setminus (S\cap T)) = \max(T\setminus S)$. Then, because $S \in {\mathcal V}$ and ${\mathcal V}$ is a variety, we can easily deduce that $\bar{S} = S \cup \{t,\to \} \in {\mathcal V}$. Moreover, $(S\cap T) \cup \{t\} \subseteq (S\cap T) \cup (\{t,\to \} \cap T) = \bar{S} \cap T$. Let us see now that $\bar{S} \cap T \subseteq (S\cap T) \cup \{t\}$. In other case, there exists $t'>t$ such that $t' \in T$ and $t'\notin S$, in contradiction with the maximality of $t$. Therefore, $(S\cap T) \cup \{t\} = \bar{S} \cap T$ and $\bar{S} \in {\mathcal V}$. Consequently, $(S \cap T)\cup \{t\} \in {\mathcal V}_T$.
	
	Let ${\mathcal R}$ be an $R$-variety and let ${\mathcal V}$ be the smallest variety containing ${\mathcal R}$. To conclude the proof of the theorem, we will see that ${\mathcal R} = {\mathcal V}_{\Delta({\mathcal R})}$. It is clear that ${\mathcal R} \subseteq {\mathcal V}_{\Delta({\mathcal R})}$. Thus, let us see the reverse one. For that, we will prove that, if $S \in {\mathcal V}$, then $S \cap \Delta({\mathcal R}) \in {\mathcal R}$. In effect, by Proposition~\ref{prop6} we have that, if $S \in {\mathcal V}$, then there exist $S_1,\ldots,S_k \in {\mathrm C}({\mathcal R})$ such that $S=S_1\cap \cdots \cap S_k$. Therefore, $S\cap \Delta({\mathcal R})=(S_1\cap \Delta({\mathcal R}))\cap \cdots \cap (S_k\cap \Delta({\mathcal R}))$. Since ${\mathcal R}$ is an $R$-variety, then ${\mathcal R}$ is closed under finite intersections. Thereby, to see that $S \cap \Delta({\mathcal R}) \in {\mathcal R}$, it is enough to show that $S_i \cap \Delta({\mathcal R}) \in {\mathcal R}$ for all $i\in \{1,\ldots,k\}$. Since $S_i \in {\mathrm C}({\mathcal R})$, then it is clear that there exist $S'_i \in {\mathcal R}$ and $n_i \in {\mathbb N}$ such that $S_i=S'_i \cup \{n_i,\to\}$. Therefore, $S_i \cap \Delta({\mathcal R}) = S'_i \cup \{ x\in \Delta({\mathcal R}) \mid x \geq n_i\} \in {\mathcal R}$, by applying Lemma~\ref{lem7}.
\end{proof}

The above theorem allows us to give many examples of $R$-varieties starting from already known varieties.
\begin{enumerate}
	\item Let us observe that, if ${\mathcal V}$ is a variety and $T\in {\mathcal V}$, then ${\mathcal V}_T = \{ S \cap T \mid S \in {\mathcal V}\} = \{ S \in {\mathcal V} \mid S \subseteq T\}$ is an $R$-variety contained in ${\mathcal V}$. Thus, for instance, we have that the set formed by all Arf numerical semigroups, which are contained in a certain Arf numerical semigroup, is an $R$-variety.
	\item Observe also that, if ${\mathcal V}$ is a variety and $T$ is a numerical semigroup such that $T \notin {\mathcal V}$, then ${\mathcal V}_T = \{ S \cap T \mid S \in {\mathcal V}\}$ is an $R$-variety not contained in ${\mathcal V}$ (because $T\in {\mathcal V}_T$ and $T \notin {\mathcal V}$). Let us take, for example, the variety ${\mathcal V}$ of all Arf numerical semigroups and $T=\langle 5, 8 \rangle \notin {\mathcal V}$. In such a case, ${\mathcal V}_T$ is the $R$-variety formed by the numerical semigroups which are the intersection of an Arf numerical semigroup and $T$.
\end{enumerate}

\begin{corollary}\label{cor9}
	Let ${\mathcal R}$ be an $R$-variety and let $U$ be a numerical semigroup. Then ${\mathcal R}_U = \{S \cap U \mid S \in {\mathcal R} \}$ is an $R$-variety.
\end{corollary}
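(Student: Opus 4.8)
The plan is to reduce this to Theorem~\ref{thm8} rather than to verify the three defining conditions of an $R$-variety directly. The key observation is that restricting twice, first to $\Delta({\mathcal R})$ and then to $U$, amounts to a single restriction to the intersection $\Delta({\mathcal R}) \cap U$.

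First I would invoke Theorem~\ref{thm8} to express ${\mathcal R}$ as ${\mathcal V}_T$ for a suitable variety ${\mathcal V}$ and numerical semigroup $T$; by the construction in the proof of that theorem one may take $T = \Delta({\mathcal R})$ and ${\mathcal V}$ to be the smallest variety containing ${\mathcal R}$. Thus every element of ${\mathcal R}$ has the form $S \cap T$ with $S \in {\mathcal V}$. Then I would compute ${\mathcal R}_U$ directly from this description, using associativity and commutativity of intersection:
$${\mathcal R}_U = \{(S \cap T) \cap U \mid S \in {\mathcal V}\} = \{S \cap (T \cap U) \mid S \in {\mathcal V}\} = {\mathcal V}_{T \cap U}.$$

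Finally, I would observe that $T \cap U$ is again a numerical semigroup: it contains $0$, is closed under addition, and its complement ${\mathbb N} \setminus (T \cap U) = ({\mathbb N}\setminus T) \cup ({\mathbb N}\setminus U)$ is finite because both $T$ and $U$ have finite complement in ${\mathbb N}$. Hence ${\mathcal R}_U = {\mathcal V}_{T \cap U}$ is the restriction of the variety ${\mathcal V}$ to a numerical semigroup, so by the forward implication of Theorem~\ref{thm8} it is an $R$-variety.

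There is no real obstacle here once Theorem~\ref{thm8} is available; the argument is essentially the one-line computation above. The only point requiring a moment of care is checking that $T \cap U$ is a genuine numerical semigroup (in particular that its complement remains finite), which is immediate. Should one wish to avoid Theorem~\ref{thm8}, the same conclusion can be reached by verifying the three conditions of an $R$-variety for ${\mathcal R}_U$ directly, with maximum element $\Delta({\mathcal R}) \cap U$, but this is longer and the reduction above is cleaner.
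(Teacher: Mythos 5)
Your argument is exactly the paper's proof: write ${\mathcal R}={\mathcal V}_T$ via Theorem~\ref{thm8}, observe ${\mathcal R}_U={\mathcal V}_{T\cap U}$, and apply Theorem~\ref{thm8} again. The only addition is your explicit check that $T\cap U$ is a numerical semigroup, which the paper leaves implicit; the proof is correct.
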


\begin{proof}
	By applying Theorem~\ref{thm8}, we have that there exist a variety ${\mathcal V}$ and a numerical semigroup $T$ such that ${\mathcal R} = {\mathcal V}_T = \{ S \cap T \mid S \in {\mathcal V}\}$. Therefore, ${\mathcal R}_U = \{ S \cap T \cap U \mid S \in {\mathcal V}\} = {\mathcal V}_{T \cap U}$, which is clearly an $R$-variety (by Theorem~\ref{thm8} again). 
\end{proof}
\label{}
The next result says us that Theorem~\ref{thm8} remains true when variety is replaced with pseudo-variety.

\begin{corollary}\label{cor10}
	Let ${\mathcal P}$ be a pseudo-variety and let $T$ be a numerical semigroup. Then ${\mathcal P}_T = \{ S \cap T \mid S \in {\mathcal P}\}$ is an $R$-variety. Moreover, every $R$-variety is of this form.
\end{corollary}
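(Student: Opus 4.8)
The plan is to deduce both halves of the statement directly from results already established, since the substantive work has been done in Theorem~\ref{thm8}, Corollary~\ref{cor9}, and Proposition~\ref{prop2}. I expect no serious obstacle here; the corollary is essentially a bookkeeping consequence of the relationship between varieties, pseudo-varieties, and $R$-varieties, and the main point is simply to route through the correct earlier result for each direction.

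For the first assertion, that $\mathcal{P}_T$ is an $R$-variety, I would begin by invoking Proposition~\ref{prop2}, which tells us that the pseudo-variety $\mathcal{P}$ is in particular an $R$-variety. Having recognized $\mathcal{P}$ as an $R$-variety, I would then apply Corollary~\ref{cor9} with $\mathcal{R}=\mathcal{P}$ and $U=T$: that corollary asserts precisely that $\mathcal{R}_U=\{S\cap U \mid S\in\mathcal{R}\}$ is an $R$-variety for any $R$-variety $\mathcal{R}$ and any numerical semigroup $U$. Since $\mathcal{P}_T=\{S\cap T \mid S\in\mathcal{P}\}=\mathcal{P}_U$ with this choice, the conclusion follows immediately.

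For the ``moreover'' part, that every $R$-variety arises as $\mathcal{P}_T$ for some pseudo-variety $\mathcal{P}$ and some numerical semigroup $T$, I would appeal to Theorem~\ref{thm8}. By that theorem, an arbitrary $R$-variety $\mathcal{R}$ can be written as $\mathcal{R}=\mathcal{V}_T$ for a suitable variety $\mathcal{V}$ and a suitable numerical semigroup $T$. The only remaining observation is that, as noted in the discussion preceding Proposition~\ref{prop1}, every variety is a pseudo-variety; hence $\mathcal{V}$ is itself a pseudo-variety, and writing $\mathcal{P}=\mathcal{V}$ exhibits $\mathcal{R}=\mathcal{P}_T$ in the desired form. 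This completes both directions, and the step I would flag as the only one requiring care is simply making explicit the inclusion of varieties among pseudo-varieties, so that Theorem~\ref{thm8} can be reinterpreted in terms of pseudo-varieties rather than varieties.
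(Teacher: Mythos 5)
Your proof is correct and follows exactly the same route as the paper: Proposition~\ref{prop2} plus Corollary~\ref{cor9} for the first assertion, and Theorem~\ref{thm8} together with the fact that every variety is a pseudo-variety for the converse. Nothing to add.
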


\begin{proof}
	By Proposition~\ref{prop2}, we know that, if ${\mathcal P}$ is a pseudo-variety, then ${\mathcal P}$ is an $R$-variety. Thereby, by applying Corollary~\ref{cor9}, we conclude that ${\mathcal P}_T$ is an $R$-variety.
	
	Now, by Theorem~\ref{thm8}, we know that, if ${\mathcal R}$ is an $R$-variety, then there exist a variety ${\mathcal V}$ and a numerical semigroup $T$ such that ${\mathcal R} = {\mathcal V}_T$. To finish the proof, it is enough to observe that all varieties are pseudo-varieties.
\end{proof}    

Let us see an illustrative example of the above corollary.

\begin{example}\label{exmp11}
	From \cite[Example~7]{pseudovar}, we have the pseudo-variety
		$${\mathcal P} = \left\{ \langle 5,6,8,9 \rangle, \langle 5,6,9,13 \rangle, \langle 5,6,8 \rangle, \langle 5,6,13,14 \rangle, \right.$$ $$\left. \langle 5,6,9 \rangle, \langle 5,6,14 \rangle, \langle 5,6,13 \rangle, \langle 5,6,19 \rangle, \langle 5,6 \rangle \right\}.$$
	Thereby, we have that ${\mathcal P}_T$ is an $R$-variety for each numerical semigroup $T$.
\end{example}

\section{Monoids associated to an $R$-variety}\label{r-monoids}

In this section, ${\mathcal R}$ we will be an $R$-variety. Now, let $M$ be a submonoid of $({\mathbb N},+)$. We will say that $M$ is an ${\mathcal R}$-monoid if it is the intersection of elements of ${\mathcal R}$. The next result is easy to proof.

\begin{lemma}\label{lem12}
	The intersection of ${\mathcal R}$-monoids is an ${\mathcal R}$-monoid.
\end{lemma}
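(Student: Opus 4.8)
The plan is to reduce the statement to the associativity of arbitrary intersections, so that it follows by pure bookkeeping from the definition of ${\mathcal R}$-monoid. Let $\{M_k\}_{k\in\Lambda}$ be any family of ${\mathcal R}$-monoids. By definition, for each $k\in\Lambda$ there is a subfamily ${\mathcal S}_k\subseteq{\mathcal R}$ with $M_k=\bigcap_{S\in{\mathcal S}_k}S$. First I would normalise these representations: since every element of ${\mathcal R}$ is contained in $\Delta({\mathcal R})$, each ${\mathcal R}$-monoid satisfies $M_k\subseteq\Delta({\mathcal R})$, so we may adjoin $\Delta({\mathcal R})$ to every ${\mathcal S}_k$ without altering the corresponding intersection; this lets us assume each ${\mathcal S}_k$ is non-empty.

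The key step is the identity
$$\bigcap_{k\in\Lambda}M_k=\bigcap_{k\in\Lambda}\;\bigcap_{S\in{\mathcal S}_k}S=\bigcap_{S\in\bigcup_{k\in\Lambda}{\mathcal S}_k}S.$$
The set on the right is an intersection of elements of the subfamily $\bigcup_{k\in\Lambda}{\mathcal S}_k\subseteq{\mathcal R}$, and it is a submonoid of $({\mathbb N},+)$ because an intersection of submonoids is again a submonoid (it contains $0$ and is closed under addition). Hence, by the very definition, $\bigcap_{k\in\Lambda}M_k$ is an ${\mathcal R}$-monoid, which is what we had to prove.

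The only point deserving a moment's attention is the empty-family convention. If $\Lambda=\emptyset$, the intersection is read as the maximum $\Delta({\mathcal R})$, which is itself an ${\mathcal R}$-monoid, being the intersection of the one-element subfamily $\{\Delta({\mathcal R})\}\subseteq{\mathcal R}$. For non-empty $\Lambda$, the normalisation above guarantees that $\bigcup_{k\in\Lambda}{\mathcal S}_k$ is non-empty, so the displayed intersection is genuine and no degenerate case arises. Beyond this minor bookkeeping there is no real obstacle: the assertion is simply the flattening of a double intersection, which explains why it is recorded as easy to prove.
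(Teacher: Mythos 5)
Your proof is correct, and it is precisely the flattening-of-a-double-intersection argument that the paper has in mind when it declares this lemma ``easy to proof'' and omits the details. The normalisation with $\Delta({\mathcal R})$ and the empty-family convention are sensible bookkeeping touches, but nothing essential differs from the intended argument.
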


From the above lemma we have the following definition: let $A \subseteq \Delta({\mathcal R})$. We will say that ${\mathcal R}(A)$ is the \emph{${\mathcal R}$-monoid generated by $A$} if ${\mathcal R}(A)$ is equal to the intersection of all the ${\mathcal R}$-monoids which contain the set $A$. Observe that ${\mathcal R}(A)$ is the smallest ${\mathcal R}$-monoid which contains the set $A$ (with respect to the inclusion order). The next result has an easy proof too.

\begin{lemma}\label{lem13}
	If $A \subseteq \Delta({\mathcal R})$, then ${\mathcal R}(A)$ is equal to the intersection of all the elements of ${\mathcal R}$ which contain the set $A$.
\end{lemma}

Let us take $A \subseteq \Delta({\mathcal R})$. If $M={\mathcal R}(A)$, then we will say that $A$ is an \emph{${\mathcal R}$-system of generators} of $M$. Moreover,we will say that $A$ is a \emph{minimal ${\mathcal R}$-system of generators} of $M$ if $M\not={\mathcal R}(B)$ for all $B \varsubsetneq A$. The next purpose in this section will be to show that every ${\mathcal R}$-monoid has a unique minimal ${\mathcal R}$-system of generators. For that, we will give some previous lemmas. We can easily deduced the first one from Lemma~\ref{lem13}.

\begin{lemma}\label{lem14}
	Let $A,B$ be two subsets of $\Delta({\mathcal R})$ and let $M$ be an ${\mathcal R}$-monoid. We have that
	\begin{enumerate}
		\item if $A\subseteq B$, then ${\mathcal R}(A)\subseteq {\mathcal R}(B)$;
		\item ${\mathcal R}(A) = {\mathcal R}(\langle A\rangle)$;
		\item ${\mathcal R}(M)=M$.
	\end{enumerate}
\end{lemma}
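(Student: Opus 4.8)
The plan is to prove the three statements of Lemma~\ref{lem14} in order, relying primarily on Lemma~\ref{lem13}, which characterizes ${\mathcal R}(A)$ as the intersection of all elements of ${\mathcal R}$ containing $A$, and on the defining minimality property of ${\mathcal R}(A)$ as the smallest ${\mathcal R}$-monoid containing $A$.

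For part (1), suppose $A \subseteq B$. Every element of ${\mathcal R}$ that contains $B$ must also contain $A$, so the collection of elements of ${\mathcal R}$ containing $B$ is a subcollection of those containing $A$. Intersecting over a smaller collection yields a larger (or equal) set, so by Lemma~\ref{lem13} we get ${\mathcal R}(A) \subseteq {\mathcal R}(B)$. Alternatively, and perhaps more cleanly, I would argue directly: ${\mathcal R}(B)$ is an ${\mathcal R}$-monoid containing $B \supseteq A$, hence containing $A$; since ${\mathcal R}(A)$ is the smallest ${\mathcal R}$-monoid containing $A$, we conclude ${\mathcal R}(A) \subseteq {\mathcal R}(B)$.

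For part (2), I would show the two inclusions between ${\mathcal R}(A)$ and ${\mathcal R}(\langle A \rangle)$. Since $A \subseteq \langle A \rangle$, part (1) gives ${\mathcal R}(A) \subseteq {\mathcal R}(\langle A \rangle)$. For the reverse inclusion, observe that ${\mathcal R}(A)$ is a submonoid of $({\mathbb N},+)$ containing $A$; being closed under addition and containing $0$, it must contain every element of $\langle A \rangle$, so $\langle A \rangle \subseteq {\mathcal R}(A)$. Applying part (1) again yields ${\mathcal R}(\langle A \rangle) \subseteq {\mathcal R}({\mathcal R}(A))$, and since ${\mathcal R}(A)$ is itself an ${\mathcal R}$-monoid, part (3) (which I would prove first, or note is immediate) gives ${\mathcal R}({\mathcal R}(A)) = {\mathcal R}(A)$. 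Combining, ${\mathcal R}(\langle A \rangle) \subseteq {\mathcal R}(A)$, and equality follows.

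For part (3), if $M$ is an ${\mathcal R}$-monoid then $M$ is itself an ${\mathcal R}$-monoid containing $M$, so the smallest ${\mathcal R}$-monoid containing $M$ can be no larger than $M$; thus ${\mathcal R}(M) \subseteq M$. The reverse inclusion $M \subseteq {\mathcal R}(M)$ is immediate from the fact that ${\mathcal R}(M)$ contains the generating set $M$ by definition. I expect no real obstacle here; the only subtlety worth flagging is the logical ordering, since my argument for part (2) invokes part (3), so I would establish (3) before completing (2). All three reduce to the universal property of ${\mathcal R}(\cdot)$ as a smallest containing ${\mathcal R}$-monoid together with the monoid closure of ${\mathcal R}$-monoids, with essentially no computation required.
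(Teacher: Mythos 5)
Your proof is correct and follows exactly the route the paper intends: the paper omits the proof of this lemma, remarking only that it is easily deduced from Lemma~\ref{lem13} and the minimality property of ${\mathcal R}(\cdot)$, which is precisely what you use. Your care about the logical ordering (establishing part (3) before finishing part (2)) is sound and introduces no circularity.
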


If $M$ is an ${\mathcal R}$-monoid, then $M$ is a submonoid of $({\mathbb N},+)$. Moreover, as we commented in Section~\ref{allvarieties}, we know that there exists a finite subset $A$ of $M$ such that $M=\langle A \rangle$. Thereby, by applying Lemma~\ref{lem14}, we have that $M={\mathcal R}(M)={\mathcal R}(\langle A\rangle)={\mathcal R}(A)$. Consequently, $A$ is a finite ${\mathcal R}$-system of generators of $M$. Thus, we can establish the next result.

\begin{lemma}\label{lem15}
	Every ${\mathcal R}$-monoid has a finite ${\mathcal R}$-system of generators.
\end{lemma}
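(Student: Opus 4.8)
The plan is to reduce the statement to the already-established fact that every ordinary submonoid of $({\mathbb N},+)$ admits a finite system of generators, and then to lift that finiteness through the closure operator ${\mathcal R}(\cdot)$. The key observation is that an ${\mathcal R}$-monoid $M$ is, by definition, an intersection of elements of ${\mathcal R}$, and each element of ${\mathcal R}$ is a numerical semigroup; hence $M$ is in particular a submonoid of $({\mathbb N},+)$, so the classical result quoted in Section~\ref{allvarieties} (see \cite[Corollary~2.8]{springer}) guarantees a finite subset $A \subseteq M$ with $M = \langle A \rangle$.

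First I would fix an ${\mathcal R}$-monoid $M$ and record that $M$ is a submonoid of $({\mathbb N},+)$, so that it has a finite (ordinary) minimal system of generators $A = \mathrm{msg}(M)$ with $M = \langle A \rangle$. Since $M \subseteq \Delta({\mathcal R})$ (because every ${\mathcal R}$-monoid is an intersection of elements of ${\mathcal R}$, all of which are contained in the maximum $\Delta({\mathcal R})$), we have $A \subseteq \Delta({\mathcal R})$, so it is legitimate to apply the closure operator ${\mathcal R}(\cdot)$ to $A$. The next step is the chain of equalities
$$
M = {\mathcal R}(M) = {\mathcal R}(\langle A \rangle) = {\mathcal R}(A),
$$
where the first equality is part~(3) of Lemma~\ref{lem14}, the second uses $M = \langle A\rangle$, and the third is part~(2) of Lemma~\ref{lem14}. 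This exhibits $A$ as a finite ${\mathcal R}$-system of generators of $M$, which is exactly the claim.

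I do not anticipate a genuine obstacle here: the content is entirely in the interplay of Lemma~\ref{lem14}(2)--(3) with the classical finiteness of ordinary systems of generators, and indeed the excerpt already sketches this argument in the paragraph preceding the statement. The only point requiring a moment's care is confirming that $A \subseteq \Delta({\mathcal R})$ so that ${\mathcal R}(A)$ is defined in the first place; this is immediate from $M \subseteq \Delta({\mathcal R})$, which itself follows from $M$ being an intersection of elements of ${\mathcal R}$ together with condition~(1) in the definition of an $R$-variety. Everything else is a direct substitution into the two cited parts of Lemma~\ref{lem14}.
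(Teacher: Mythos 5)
Your argument is correct and is essentially identical to the paper's own proof, which also takes the finite ordinary system of generators $A$ of $M$ and applies Lemma~\ref{lem14} to get $M={\mathcal R}(M)={\mathcal R}(\langle A\rangle)={\mathcal R}(A)$. Your extra remark that $A\subseteq\Delta({\mathcal R})$ (so that ${\mathcal R}(A)$ is well defined) is a small point of care the paper leaves implicit.
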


In the following result, we characterize the minimal ${\mathcal R}$-systems of generators.

\begin{lemma}\label{lem16}
	Let $A \subseteq \Delta({\mathcal R})$ and $M={\mathcal R}(A)$. Then $A$ is a minimal ${\mathcal R}$-system of generators of $M$ if and only if $a\notin {\mathcal R}(A\setminus\{a\})$ for all $a\in A$.
\end{lemma}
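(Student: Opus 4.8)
The plan is to prove Lemma~\ref{lem16} by establishing both implications via their contrapositives, using the monotonicity and idempotence facts packaged in Lemma~\ref{lem14}.

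\medskip

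\noindent\textbf{Sufficiency (via contrapositive).} Suppose $A$ is \emph{not} a minimal ${\mathcal R}$-system of generators of $M$. Then there exists $B\varsubsetneq A$ with ${\mathcal R}(B)=M$. Pick any $a\in A\setminus B$; such an element exists since the containment is strict. Then $B\subseteq A\setminus\{a\}$, so by part~(1) of Lemma~\ref{lem14} we get $M={\mathcal R}(B)\subseteq {\mathcal R}(A\setminus\{a\})$. On the other hand $A\setminus\{a\}\subseteq A$ forces ${\mathcal R}(A\setminus\{a\})\subseteq {\mathcal R}(A)=M$, whence ${\mathcal R}(A\setminus\{a\})=M$. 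Since $a\in A\subseteq M={\mathcal R}(A\setminus\{a\})$, we have found an $a\in A$ with $a\in {\mathcal R}(A\setminus\{a\})$, which is the negation of the right-hand condition. This proves the sufficiency direction.

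\medskip

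\noindent\textbf{Necessity (via contrapositive).} Conversely, suppose that the right-hand condition fails, i.e.\ there exists some $a\in A$ with $a\in {\mathcal R}(A\setminus\{a\})$. Set $B=A\setminus\{a\}$, so $B\varsubsetneq A$. The goal is to show ${\mathcal R}(B)=M$, which contradicts minimality. The inclusion ${\mathcal R}(B)\subseteq {\mathcal R}(A)=M$ is immediate from $B\subseteq A$ and Lemma~\ref{lem14}(1). For the reverse inclusion, the key observation is that $a\in {\mathcal R}(B)$ by hypothesis, and $B\subseteq {\mathcal R}(B)$, so the whole set $A=B\cup\{a\}$ sits inside the ${\mathcal R}$-monoid ${\mathcal R}(B)$. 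Applying monotonicity and idempotence from Lemma~\ref{lem14}, together with the fact that ${\mathcal R}(B)$ is itself an ${\mathcal R}$-monoid, we get ${\mathcal R}(A)\subseteq {\mathcal R}({\mathcal R}(B))={\mathcal R}(B)$, using part~(3) of Lemma~\ref{lem14} on the ${\mathcal R}$-monoid ${\mathcal R}(B)$. Hence $M={\mathcal R}(A)\subseteq {\mathcal R}(B)$, giving equality.

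\medskip

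\noindent The main subtlety to get right is the reverse inclusion in the necessity argument: one must pass from ``$A$ is contained in the ${\mathcal R}$-monoid ${\mathcal R}(B)$'' to ``${\mathcal R}(A)\subseteq {\mathcal R}(B)$''. This is exactly where the idempotence identity ${\mathcal R}(N)=N$ for an ${\mathcal R}$-monoid $N$ (Lemma~\ref{lem14}(3)) is needed, since ${\mathcal R}(A)$ is the \emph{smallest} ${\mathcal R}$-monoid containing $A$ and ${\mathcal R}(B)$ is an ${\mathcal R}$-monoid containing $A$. I expect no genuine obstacle here: every step reduces to the three formal properties of the closure operator ${\mathcal R}(-)$ already recorded in Lemma~\ref{lem14}, and the argument is a routine instance of the standard ``minimal generating set iff no element is redundant'' pattern for closure operators.
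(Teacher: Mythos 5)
Your proof is correct and follows essentially the same route as the paper's: both directions are handled by contradiction/contrapositive using the monotonicity and idempotence properties of ${\mathcal R}(\cdot)$ from Lemma~\ref{lem14}, with the key step $M={\mathcal R}(A)\subseteq{\mathcal R}({\mathcal R}(A\setminus\{a\}))={\mathcal R}(A\setminus\{a\})$ appearing in both. No substantive differences.
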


\begin{proof}
	\emph{(Necessity.)} If $a \in {\mathcal R}(A\setminus\{a\})$, then $A\subseteq {\mathcal R}(A\setminus\{a\})$. Thus, by Lemma~\ref{lem14}, we get that $M={\mathcal R}(A)\subseteq{\mathcal R}({\mathcal R}(A\setminus\{a\})) = {\mathcal R}(A\setminus\{a\}) \subseteq {\mathcal R}(A)=M$. Therefore, $M={\mathcal R}(A\setminus\{a\})$, in contradiction with the minimality of $A$.
	
	\emph{(Sufficiency.)} If $A$ is not a minimal ${\mathcal R}$-system of generators of $M$, then there exists $B \varsubsetneq A$ such that ${\mathcal R}(B)=M$. Then, by Lemma~\ref{lem14}, if $a\in A\setminus B$, then $a\in M={\mathcal R}(B)\subseteq {\mathcal R}(A\setminus\{a\})$, in contradiction with the hypothesis.
\end{proof}

The next result generalizes an evident property of submonoids of $({\mathbb N},+)$. More concretely, every element $x$ of a submonoid $M$ of $({\mathbb N},+)$ is expressible as a non-negative integer linear combination of the generators of $M$ that are smaller than or equal to $x$.

\begin{lemma}\label{lem17}
	Let $A \subseteq \Delta({\mathcal R})$ and $x\in {\mathcal R}(A)$. Then $x\in {\mathcal R}(\{a \in A \mid a\leq x\})$.
\end{lemma}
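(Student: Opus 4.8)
The plan is to use the characterization of $\mathcal{R}(B)$ as an intersection (Lemma~\ref{lem13}) together with the ``filling in from above'' operation provided by Lemma~\ref{lem7}. Write $B = \{a \in A \mid a \leq x\}$. By Lemma~\ref{lem13}, $\mathcal{R}(B)$ is the intersection of all elements of $\mathcal{R}$ containing $B$, so it suffices to show that $x$ belongs to every $S \in \mathcal{R}$ with $B \subseteq S$; for then $x$ lies in that intersection, which is exactly $\mathcal{R}(B)$.

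First I would fix an arbitrary $S \in \mathcal{R}$ with $B \subseteq S$ and aim to prove $x \in S$. The key idea is to enlarge $S$ to an element of $\mathcal{R}$ that contains all of $A$, without disturbing whether $x$ itself lies in it. Applying Lemma~\ref{lem7} with $n = x+1$, the set $S' = S \cup \{y \in \Delta(\mathcal{R}) \mid y \geq x+1\}$ is again in $\mathcal{R}$. Now $S'$ contains $B$ (since $S \subseteq S'$) and it contains every element of $A \setminus B$: each such element is an $a \in A \subseteq \Delta(\mathcal{R})$ with $a > x$, hence $a \geq x+1$, so it lies in the adjoined tail. Therefore $A = B \cup (A \setminus B) \subseteq S'$.

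Having $A \subseteq S' \in \mathcal{R}$, Lemma~\ref{lem13} gives $\mathcal{R}(A) \subseteq S'$, and since $x \in \mathcal{R}(A)$ by hypothesis we obtain $x \in S'$. But $x < x+1$, so $x \notin \{y \in \Delta(\mathcal{R}) \mid y \geq x+1\}$; as $S' = S \cup \{y \in \Delta(\mathcal{R}) \mid y \geq x+1\}$, membership of $x$ in $S'$ forces $x \in S$. Since $S$ was an arbitrary element of $\mathcal{R}$ containing $B$, we conclude $x \in \mathcal{R}(B) = \mathcal{R}(\{a \in A \mid a \leq x\})$.

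The only delicate point is the choice of threshold $n = x+1$ in Lemma~\ref{lem7}: it must be large enough that the adjoined tail absorbs every generator of $A$ exceeding $x$, yet strictly above $x$ so that adjoining the tail cannot by itself place $x$ into $S'$. With that choice the reasoning is otherwise routine, so I do not anticipate a substantial obstacle beyond pinning down this bookkeeping.
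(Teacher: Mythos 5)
Your proof is correct and follows essentially the same route as the paper's: both hinge on Lemma~\ref{lem13} together with the enlargement $S \cup \{y\in \Delta({\mathcal R}) \mid y\geq x+1\}$ provided by Lemma~\ref{lem7}. The only difference is presentational — you argue directly that $x$ lies in every $S\in{\mathcal R}$ containing $\{a\in A\mid a\leq x\}$, whereas the paper phrases the identical argument as a proof by contradiction.
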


\begin{proof}
	Let us suppose that $x\not \in {\mathcal R}(\{a\in A\mid a\leq x\})$. Then, from Lemma~\ref{lem13}, we know that there exists $S\in {\mathcal R}$ such that $\{a\in A \mid a\leq x\}\subseteq S$ and $x\not \in S$. By applying now Lemma~\ref{lem7}, we have that $\bar{S}= S \cup \{m\in \Delta({\mathcal R}) \mid m \geq x+1 \} \in {\mathcal R}$. Observe that, obviously, $A\subseteq \bar{S}$ and $x\notin \bar{S}$. Therefore, by applying once again Lemma~\ref{lem13}, we get that $x\notin {\mathcal R}(A)$, in contradiction with the hypothesis.
\end{proof}

We are now ready to show the above announced result.

\begin{theorem}\label{thm18}
	Every ${\mathcal R}$-monoid admits a unique minimal ${\mathcal R}$-system of generators. In addition, such a ${\mathcal R}$-system is finite.
\end{theorem}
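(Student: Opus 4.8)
The plan is to treat existence together with finiteness first, and then to establish uniqueness through an \emph{intrinsic} description of the minimal system, that is, one that does not refer to any chosen generating set.

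First I would settle existence and finiteness. By Lemma~\ref{lem15}, $M$ has a finite $\mathcal{R}$-system of generators $A$. If $A$ is not already minimal, then by Lemma~\ref{lem16} there is some $a\in A$ with $a\in{\mathcal R}(A\setminus\{a\})$, and the argument in the necessity part of Lemma~\ref{lem16} then gives ${\mathcal R}(A\setminus\{a\})=M$; hence $A\setminus\{a\}$ is again an $\mathcal{R}$-system of generators of $M$ with strictly fewer elements. Since $A$ is finite, iterating this removal must terminate, and it terminates precisely at a minimal $\mathcal{R}$-system, which is therefore finite.

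The main work is uniqueness, and the key idea is to prove that \emph{any} minimal $\mathcal{R}$-system of generators $A$ of $M$ coincides with the set
\[ C=\left\{\, x\in M \mid x\notin {\mathcal R}(\{m\in M \mid m<x\}) \,\right\}. \]
Since $C$ depends only on $M$ and on $\mathcal{R}$, and not on $A$, the equality $A=C$ forces any two minimal $\mathcal{R}$-systems of generators of $M$ to be equal. For the inclusion $A\subseteq C$, I would take $a\in A$ and suppose, toward a contradiction, that $a\in{\mathcal R}(\{m\in M \mid m<a\})$. Each $m\in M$ with $m<a$ lies in $M={\mathcal R}(A)$, so Lemma~\ref{lem17} places it in ${\mathcal R}(\{a'\in A \mid a'\leq m\})$; as $m<a$ this set omits $a$, whence $m\in{\mathcal R}(A\setminus\{a\})$ by Lemma~\ref{lem14}. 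Thus $\{m\in M \mid m<a\}\subseteq{\mathcal R}(A\setminus\{a\})$, and since ${\mathcal R}(A\setminus\{a\})$ is an $\mathcal{R}$-monoid, Lemma~\ref{lem14} yields $a\in{\mathcal R}(\{m\in M \mid m<a\})\subseteq{\mathcal R}(A\setminus\{a\})$, contradicting the minimality of $A$ (Lemma~\ref{lem16}). For the reverse inclusion $C\subseteq A$, I would take $x\in C$ and suppose $x\notin A$; then Lemma~\ref{lem17} gives $x\in{\mathcal R}(\{a\in A \mid a\leq x\})$, and because $x\notin A$ this set equals $\{a\in A \mid a<x\}\subseteq\{m\in M \mid m<x\}$, so monotonicity (Lemma~\ref{lem14}) forces $x\in{\mathcal R}(\{m\in M \mid m<x\})$, contradicting $x\in C$.

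I expect the delicate step to be the inclusion $A\subseteq C$, where Lemma~\ref{lem17} is used to \emph{localize} membership below a given element: the argument hinges on replacing ${\mathcal R}(A)$ by ${\mathcal R}$ of those generators not exceeding the relevant bound, so that the distinguished generator $a$ can be excluded from the generating set. Once the equality $A=C$ is in hand, uniqueness is immediate, and finiteness of the (now unique) minimal $\mathcal{R}$-system follows from the existence argument of the second paragraph.
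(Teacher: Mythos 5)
Your proof is correct, but the uniqueness argument is organized differently from the paper's. The paper takes two minimal ${\mathcal R}$-systems $A=\{a_1<a_2<\cdots\}$ and $B=\{b_1<b_2<\cdots\}$, looks at the first index $i$ where they differ, and applies Lemma~\ref{lem17} to the smaller of $a_i,b_i$ to contradict Lemma~\ref{lem16}; uniqueness thus comes from a direct pairwise comparison. You instead prove that \emph{every} minimal ${\mathcal R}$-system equals the intrinsic set $C=\{x\in M \mid x\notin {\mathcal R}(\{m\in M\mid m<x\})\}$, which depends only on $M$ and ${\mathcal R}$, so uniqueness is automatic. The engine is the same in both cases --- Lemma~\ref{lem17} localizing membership to generators below a given bound, combined with the monotonicity and idempotence statements of Lemma~\ref{lem14} and the characterization in Lemma~\ref{lem16} --- but your route buys something extra: an explicit, generator-free description of the minimal ${\mathcal R}$-system, in the spirit of the classical formula for $\mathrm{msg}(M)$ of an ordinary submonoid of $({\mathbb N},+)$. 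You are also more explicit than the paper about existence (the paper only invokes Lemma~\ref{lem15} for finiteness and leaves the removal-until-minimal step implicit); your termination argument via iterated deletion from a finite system is the right way to fill that in. All the lemma applications check out, including the point that $\{m\in M\mid m<a\}\subseteq\Delta({\mathcal R})$ so that Lemma~\ref{lem14} applies.
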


\begin{proof}
	Let $M$ be an ${\mathcal R}$-monoid and let $A,B$ be two minimal ${\mathcal R}$-systems of generators of $M$. We are going to see that $A=B$. For that, let us suppose that $A=\{a_1<a_2<\cdots\}$ and $B=\{b_1<b_2<\cdots\}$. If $A\neq B$, then there exists $i=\min\{k\mid a_k\neq b_k\}$. Let us assume, without loss of generality, that $a_i<b_i$. Since $a_i\in M={\mathcal R}(A)={\mathcal R}(B)$, by Lemma~\ref{lem17}, we have that $a_i\in {\mathcal R}(\{b_1,\ldots,b_{i-1}\})$. Because $\{b_1,\ldots,b_{i-1}\}=\{a_1,\ldots,a_{i-1}\}$, then $a_i\in {\mathcal R}(\{a_1,\ldots,a_{i-1}\})$, in contradiction with Lemma~\ref{lem16}. Finally, by Lemma~\ref{lem15}, we have that the minimal ${\mathcal R}$-system of generators is finite.
\end{proof}

If $M$ is a ${\mathcal R}$-monoid, then the cardinality of the minimal ${\mathcal R}$-system of generators of $M$ will be called the \emph{${\mathcal R}$-range} of $M$.

\begin{example}\label{exmp19}
	Let $S,T$ be two numerical semigroups such that $S \subseteq T$. We define recurrently the following sequence of numerical semigroups.
	\begin{itemize}
		\item $S_0=S$,
		\item if $S_i\neq T$, then $S_{i+1}=S_i\cup\{{\mathrm F}_T(S_i)\}$.
	\end{itemize}
	Since $T \setminus S$ is a finite set, then we get a finite chain of numerical semigroups $S=S_0 \varsubsetneq S_1\varsubsetneq \cdots \varsubsetneq S_n=T$. We will denote by ${\mathrm C}(S,T)$ the set $\{S_0,S_1,\ldots,S_n\}$ and will say that it is the \emph{chain of $S$ restricted to $T$}. It is clear that ${\mathrm C}(S,T)$ is an $R$-variety. Moreover, it is also clear that, for each $i\in\{1,\ldots,n\}$, $S_i$ is the smallest element of ${\mathrm C}(S,T)$ containing ${\mathrm F}_T(S_{i-1})$. Therefore, $\{{\mathrm F}_T(S_{i-1})\}$ is the minimal ${\mathrm C}(S,T)$-system of generators of $S_i$ for all $i\in\{1,\ldots,n\}$. Let us also observe that the empty set, $\emptyset$, is the minimal ${\mathrm C}(S,T)$-system of generators of $S_0$. Thereby, the ${\mathrm C}(S,T)$-range of $S_i$ is equal to $1$, if $i\in\{1,\ldots,n\}$, and $0$, if $i=0$.
\end{example}

It is well known that, if $M$ is a submonoid of $({\mathbb N},+)$ and $x\in M$, then $M\setminus \{x\}$ is another submonoid of $({\mathbb N},+)$ if and only if $x\in \mathrm{msg}(M)$. In the next result we generalize this property to ${\mathcal R}$-monoids.

\begin{proposition}\label{prop20}
	Let $M$ be an ${\mathcal R}$-monoid and let $x \in M$. Then $M \setminus \{x\}$ is an ${\mathcal R}$-monoid if and only if $x$ belongs to the minimal ${\mathcal R}$-system of generators of $M$.
\end{proposition}

\begin{proof}
	Let $A$ be the minimal ${\mathcal R}$-system of generators of $M$. If $x\not \in A$, then $A \subseteq M \setminus \{x\}$. Therefore, $M \setminus \{x\}$ is a ${\mathcal R}$-monoid containing $A$ and, consequently, $M={\mathcal R}(A) \subseteq M \setminus \{x\}$, which is a contradiction.
	
	Conversely, by Theorem~\ref{thm18}, we have that, if $x \in A$, then ${\mathcal R}(M \setminus \{x\}) \neq {\mathcal R}(A) = M$. Thereby, ${\mathcal R}(M \setminus \{x\})=M \setminus \{x\}$. Consequently, $M \setminus \{x\}$ is a ${\mathcal R}$-monoid.
\end{proof}

Let us illustrate the above proposition with an example.

\begin{example}\label{exmp21}
	Let $T$ be a numerical semigroup and let $A\subseteq T$. Then we know that ${\mathcal R}(A,T)=\{S \mid S \mbox{ is a numerical semigroup and } A\subseteq S\subseteq T \}$ is an $R$-variety. By applying Proposition~\ref{prop20}, we easily deduce that, if $S\in {\mathcal R}(A,T)$, then the minimal ${\mathcal R}(A,T)$-system of generators of $S$ is $\{x\in \mathrm{msg} \mid x\notin A \}$.
\end{example}

From Theorem~\ref{thm8} we know that every $R$-variety is of the form ${\mathcal V}_T =  \{S\cap T \mid S\in {\mathcal V} \}$, where ${\mathcal V}$ is a variety and $T$ is a numerical semigroup. Now, our purpose is to study the relation between ${\mathcal V}$-monoids and ${\mathcal V}_T$-monoids.

\begin{proposition}\label{prop22}
	Let $M$ be a submonoid of $({\mathbb N},+)$ and let $T$ be a numerical semigroup. Then $M$ is a ${\mathcal V}_T$-monoid if and only if there exists a ${\mathcal V}$-monoid $M'$ such that $M=M'\cap T$.
\end{proposition}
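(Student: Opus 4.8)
The plan is to prove both implications directly from the definition of ${\mathcal V}_T$ together with the elementary distributive identity $\bigcap_{i}(S_i\cap T)=\left(\bigcap_{i}S_i\right)\cap T$. First I would recall the relevant vocabulary: since ${\mathcal V}$ is a variety we have ${\mathbb N}\in{\mathcal V}$ (by Proposition~\ref{prop1}), so ${\mathcal V}$ is itself an $R$-variety with $\Delta({\mathcal V})={\mathbb N}$; hence the notion of a \emph{${\mathcal V}$-monoid} is meaningful and means exactly a submonoid of $({\mathbb N},+)$ expressible as an intersection of members of ${\mathcal V}$. Dually, by the very definition of ${\mathcal V}_T$, every element of ${\mathcal V}_T$ has the shape $S\cap T$ with $S\in{\mathcal V}$, and a ${\mathcal V}_T$-monoid is an intersection of such elements.

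For the sufficiency, I would start from $M=M'\cap T$ with $M'$ a ${\mathcal V}$-monoid and write $M'=\bigcap_{i\in I}S_i$ where $S_i\in{\mathcal V}$. Intersecting with $T$ and using the identity above yields $M=\bigcap_{i\in I}(S_i\cap T)$, and each $S_i\cap T$ lies in ${\mathcal V}_T$. Thus $M$ is an intersection of elements of ${\mathcal V}_T$, i.e.\ a ${\mathcal V}_T$-monoid.

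For the necessity, I would run the same computation in reverse: assuming $M$ is a ${\mathcal V}_T$-monoid, write $M=\bigcap_{j\in J}W_j$ with $W_j\in{\mathcal V}_T$, choose for each $j$ some $S_j\in{\mathcal V}$ with $W_j=S_j\cap T$, and set $M'=\bigcap_{j\in J}S_j$. Then $M'$ is a ${\mathcal V}$-monoid and $M=\bigcap_{j}(S_j\cap T)=M'\cap T$, as required.

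I do not expect any substantial obstacle; the argument is essentially a translation between the two monoid notions through the distributive law for intersection. The only points needing care are bookkeeping ones: one should note that an intersection of submonoids of $({\mathbb N},+)$ is again a submonoid (each contains $0$ and is closed under addition), so $M$ and $M'$ are genuinely submonoids, and that the indexing families may be taken nonempty without loss of generality, since one may always adjoin $\Delta({\mathcal V})={\mathbb N}$ to the family defining $M'$ and, correspondingly, $\Delta({\mathcal V}_T)=T$ to that defining $M$, which alters nothing. Beyond these trivial checks the statement follows at once.
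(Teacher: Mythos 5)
Your proof is correct and follows essentially the same route as the paper's: both directions reduce to the distributive identity $\bigcap_i (S_i\cap T)=\bigl(\bigcap_i S_i\bigr)\cap T$ applied to a family of elements of ${\mathcal V}$ witnessing the monoid in question. The only cosmetic difference is that the paper collects \emph{all} $S'\in{\mathcal V}$ with $S'\cap T\in{\mathcal F}$ rather than choosing one representative per element, which changes nothing.
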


\begin{proof}
	\emph{(Necessity.)} If $M$ is a ${\mathcal V}_T$-monoid, then there exists ${\mathcal F} \subseteq {\mathcal V}_T$ such that $M=\bigcap_{S\in {\mathcal F}} S$. But, if $S\in {\mathcal F}$, then $S\in {\mathcal V}_T$ and, consequently, there exists $S' \in {\mathcal V}$ such that $S=S'\cap T$. Now, let ${\mathcal F}'=\{S'\in{\mathcal V} \mid S'\cap T \in {\mathcal F} \}$ and let $M'=\bigcap_{S'\in {\mathcal F}'} S'$. Then it is clear that $M'$ is a ${\mathcal V}$-monoid and that $M=M'\cap T$.
	
	\emph{(Sufficiency.)} If $M'$ is a ${\mathcal V}$-monoid, then there exists  ${\mathcal F}' \in {\mathcal V}$ such that $M'=\bigcap_{S'\in {\mathcal F}'} S'$. Let ${\mathcal F}=\{S'\cap T\mid S'\in {\mathcal F}' \}$. Then it is clear that ${\mathcal F} \subseteq {\mathcal V}_T$ and that $\bigcap_{S\in {\mathcal F}} S = M'\cap T$. Therefore, $M'\cap T$ is a ${\mathcal V}_T$-monoid.
\end{proof}

Observe that, as a consequence of the above proposition, we have that the set of ${\mathcal V}_T$-monoids is precisely given by $\left\{M\cap T \mid M \mbox{ is a } {\mathcal V}\mbox{-monoid} \right\}$.

\begin{corollary}\label{cor23}
	Let $T$ be a numerical semigroup. If $A\subseteq T$, then ${\mathcal V}_T(A)={\mathcal V}(A) \cap T$.
\end{corollary}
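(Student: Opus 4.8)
The plan is to prove the two inclusions separately, exploiting the characterization of ${\mathcal V}_T$-monoids given by Proposition~\ref{prop22} together with the fact that both ${\mathcal V}(A)$ and ${\mathcal V}_T(A)$ are, by definition, the \emph{smallest} monoids (in their respective classes) containing $A$. Throughout I would implicitly use that $\Delta({\mathcal V})={\mathbb N}$ (so ${\mathcal V}(A)$ is defined) and that $\Delta({\mathcal V}_T)=T$, as established in the proof of Theorem~\ref{thm8}, so that ${\mathcal V}_T(A)$ is defined for $A\subseteq T$.

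First I would verify that ${\mathcal V}(A)\cap T$ is a legitimate competitor for ${\mathcal V}_T(A)$. Since ${\mathcal V}(A)$ is a ${\mathcal V}$-monoid, the sufficiency direction of Proposition~\ref{prop22} shows that ${\mathcal V}(A)\cap T$ is a ${\mathcal V}_T$-monoid. Moreover $A\subseteq {\mathcal V}(A)$ because $A$ generates ${\mathcal V}(A)$, and $A\subseteq T$ by hypothesis, so $A\subseteq {\mathcal V}(A)\cap T$. Hence ${\mathcal V}(A)\cap T$ is a ${\mathcal V}_T$-monoid containing $A$, and since ${\mathcal V}_T(A)$ is the smallest such monoid, I obtain ${\mathcal V}_T(A)\subseteq {\mathcal V}(A)\cap T$.

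For the reverse inclusion I would start from ${\mathcal V}_T(A)$ itself. Being a ${\mathcal V}_T$-monoid, the necessity direction of Proposition~\ref{prop22} provides a ${\mathcal V}$-monoid $M'$ with ${\mathcal V}_T(A)=M'\cap T$. Then $A\subseteq {\mathcal V}_T(A)=M'\cap T\subseteq M'$, so $M'$ is a ${\mathcal V}$-monoid containing $A$; by minimality of ${\mathcal V}(A)$ this forces ${\mathcal V}(A)\subseteq M'$, and intersecting with $T$ gives ${\mathcal V}(A)\cap T\subseteq M'\cap T={\mathcal V}_T(A)$. Combining the two inclusions yields the claimed equality ${\mathcal V}_T(A)={\mathcal V}(A)\cap T$.

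There is no genuine obstacle here: the argument is a symmetric comparison of two ``smallest'' objects, and the only points requiring care are invoking the hypothesis $A\subseteq T$ at the right moment (to guarantee $A\subseteq {\mathcal V}(A)\cap T$) and applying the correct direction of Proposition~\ref{prop22} in each of the two inclusions. If anything counts as the delicate step, it is the reverse inclusion, where one must resist trying to describe ${\mathcal V}_T(A)$ explicitly and instead simply extract \emph{some} witnessing ${\mathcal V}$-monoid $M'$ from Proposition~\ref{prop22} and let minimality of ${\mathcal V}(A)$ do the work.
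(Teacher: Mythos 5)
Your proof is correct and follows essentially the same route as the paper: the first inclusion comes from recognizing ${\mathcal V}(A)\cap T$ as a ${\mathcal V}_T$-monoid containing $A$ via Proposition~\ref{prop22}, and the reverse one from extracting a witnessing ${\mathcal V}$-monoid $M'$ with ${\mathcal V}_T(A)=M'\cap T$ and invoking minimality of ${\mathcal V}(A)$. Your version is just slightly more explicit about where $A\subseteq T$ is used.
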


\begin{proof}
	By Proposition~\ref{prop22}, we know that ${\mathcal V}(A) \cap T$ is a${\mathcal V}_T$-monoid containing $A$. Therefore, ${\mathcal V}_T(A) \subseteq {\mathcal V}(A) \cap T$.
	
	Let us see now the opposite inclusion. By applying once more  Proposition~\ref{prop22}, we deduce that there exists a ${\mathcal V}$-monoid $M$ such that ${\mathcal V}_T(A)=M\cap T$. Thus, it is clear that $A\subseteq M$ and, thereby, ${\mathcal V}(A)\subseteq M$. Consequently, ${\mathcal V}(A) \cap T \subseteq M\cap T = {\mathcal V}_T(A)$.
\end{proof}

From Corollary~\ref{cor23}, we have that the set formed by the ${\mathcal V}_T$-monoids is $\left\{{\mathcal V}(A) \cap T \mid A \subseteq T \right\} = \left\{M\cap T \mid M\right.$ is a ${\mathcal V}$-monoid and its minimal ${\mathcal V}$-system of generators is including in $\left. T \right\}$. Moreover, observe that, if $T\in {\mathcal V}$, then ${\mathcal V}_T(A)={\mathcal V}(A)$ and, therefore, in such a case the set formed by all the ${\mathcal V}_T$-monoids coincides with the set formed by all the ${\mathcal V}$-monoids that are contained in $T$.

For some varieties there exist algorithms that allow us to compute ${\mathcal V}(A)$ by starting from $A$. Thereby, we can use such results in order to compute ${\mathcal V}_T (A)$. Let us see two examples of this fact.

\begin{example}\label{exmp24a}
	An $\mathrm{LD}$-semigroup (see \cite{digitales}) is a numerical semigroup $S$ fulfilling that $a+b-1\in S$ for all $a,b\in S\setminus \{0\}$. Let ${\mathcal V}$ the set formed by all $\mathrm{LD}$-semigroups. In \cite{digitales} it is shown that ${\mathcal V}$ is a variety. Let $T=\langle 5,7,9 \rangle$ (observe that $T\notin {\mathcal V}$). By Theorem~\ref{thm8}, we know that ${\mathcal V}_T=\{S\cap T \mid S\in{\mathcal V} \}$ is an $R$-variety. Let us suppose that we can compute ${\mathcal V}_T(\{5\})$.
	
	In \cite{digitales} we have an algorithm to compute ${\mathcal V}(A)$ by starting from $A$. By using such algorithm, in \cite[Example~33]{digitales} it is shown that ${\mathcal V}(\{5\})=\langle 5,9,13,17,21\rangle$. Therefore, by applying Corollary~\ref{cor23}, we have that ${\mathcal V}_T(\{5\})=\langle 5,9,13,17,21\rangle \cap \langle 5,7,9 \rangle = \langle 5,9,17,21 \rangle$.
\end{example}

\begin{example}\label{exmp24b}
	An $\mathrm{PL}$-semigroup (see \cite{frases}) is a numerical semigroup $S$ fulfilling that $a+b+1\in S$ for all $a,b\in S\setminus \{0\}$. Let ${\mathcal V}$ the set formed by all $\mathrm{PL}$-semigroups. In \cite{frases} it is shown that ${\mathcal V}$ is a variety and it is given an  algorithm to compute ${\mathcal V}(A)$ by starting from $A$. Let $T=\langle 4,7,13 \rangle$ (observe that $T\in {\mathcal V}$). By Theorem~\ref{thm8}, we know that ${\mathcal V}_T=\{S\cap T \mid S\in{\mathcal V} \}$ is an $R$-variety. Let us suppose that we can compute ${\mathcal V}_T(\{4,7\})$.
	
	From \cite[Example~48]{frases}, we know that ${\mathcal V}(\{4,7\})=\langle 4,7,9\rangle$. Thus, by applying Corollary~\ref{cor23}, we have that ${\mathcal V}_T(\{4,7\})=\langle 4,7,9\rangle \cap \langle 4,7,13 \rangle = \langle 4,7,13 \rangle$.
\end{example}

Let $T$ be a numerical semigroup. We know that, if $M$ is a ${\mathcal V}_T$-monoid, then there exists a ${\mathcal V}$-monoid, $M'$, with minimal ${\mathcal V}$-system of generators contained in $T$, such that $M=M'\cap T$. The next result says us that, in this situation, the minimal ${\mathcal V}$-system of generators of $M'$ is just the minimal ${\mathcal V}_T$-system of generators of $M$.

\begin{proposition}\label{prop25}
	Let $A\subseteq T$. Then $A$ is the minimal ${\mathcal V}_T$-system of generators of ${\mathcal V}_T(A)$ if and only if $A$ is the minimal ${\mathcal V}$-system of generators of ${\mathcal V}(A)$.
\end{proposition}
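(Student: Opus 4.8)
The plan is to reduce both sides to the irredundancy criterion of Lemma~\ref{lem16} and then transport that criterion across the intersection with $T$ by means of Corollary~\ref{cor23}. Recall first that a variety is in particular a pseudo-variety with maximum ${\mathbb N}$, hence an $R$-variety by Proposition~\ref{prop2}; thus Lemma~\ref{lem16} applies verbatim to both ${\mathcal V}$ and ${\mathcal V}_T$. By that lemma, $A$ is the minimal ${\mathcal V}$-system of generators of ${\mathcal V}(A)$ if and only if $a \notin {\mathcal V}(A \setminus \{a\})$ for every $a \in A$, and likewise $A$ is the minimal ${\mathcal V}_T$-system of generators of ${\mathcal V}_T(A)$ if and only if $a \notin {\mathcal V}_T(A \setminus \{a\})$ for every $a \in A$.

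So it suffices to show that, for each fixed $a \in A$, the condition $a \notin {\mathcal V}(A \setminus \{a\})$ is equivalent to $a \notin {\mathcal V}_T(A \setminus \{a\})$. Since $A \subseteq T$, also $A \setminus \{a\} \subseteq T$, so Corollary~\ref{cor23} gives ${\mathcal V}_T(A \setminus \{a\}) = {\mathcal V}(A \setminus \{a\}) \cap T$. Now I would invoke the hypothesis $a \in A \subseteq T$: because $a \in T$, membership of $a$ in ${\mathcal V}(A \setminus \{a\}) \cap T$ is equivalent to membership of $a$ in ${\mathcal V}(A \setminus \{a\})$ alone. Hence $a \in {\mathcal V}_T(A \setminus \{a\})$ if and only if $a \in {\mathcal V}(A \setminus \{a\})$, and taking negations gives exactly the desired equivalence for that $a$.

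Quantifying this equivalence over all $a \in A$ shows that the two irredundancy conditions hold simultaneously, and by Lemma~\ref{lem16} this is precisely the assertion that $A$ is the minimal ${\mathcal V}_T$-system of generators of ${\mathcal V}_T(A)$ if and only if $A$ is the minimal ${\mathcal V}$-system of generators of ${\mathcal V}(A)$. There is no serious obstacle here: the only point that needs care is remembering to use $a \in T$ when stripping off the intersection with $T$, which is exactly what makes the two criteria coincide. Indeed, the hypothesis $A \subseteq T$ is what allows Corollary~\ref{cor23} to be applied to $A \setminus \{a\}$ in the first place, so it is both the natural and the essential assumption for the argument.
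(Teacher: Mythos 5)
Your proof is correct and rests on the same two ingredients as the paper's, namely Corollary~\ref{cor23} to transport generation across the intersection with $T$ and Lemma~\ref{lem16} as the irredundancy criterion. The only difference is organizational: you reduce both minimality statements to the elementwise condition $a\notin{\mathcal V}(A\setminus\{a\})$ (resp.\ $a\notin{\mathcal V}_T(A\setminus\{a\})$) and prove the two criteria equivalent in one pass, using $a\in T$ to strip off the intersection, whereas the paper argues the two implications separately by contraposition; your version is slightly more symmetric but not essentially different.
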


\begin{proof}
	\emph{(Necessity.)} Let us suppose that $A$ is not the minimal ${\mathcal V}$-system of generators of ${\mathcal V}(A)$. That is, there exists $B\varsubsetneq A$ such that ${\mathcal V}(B)={\mathcal V}(A)$. Then, from Corollary~\ref{cor23}, we have that ${\mathcal V}_T(A)={\mathcal V}(A) \cap T = {\mathcal V}(B)\cap T = {\mathcal V}_T(B)$. Therefore, $A$ is not the minimal ${\mathcal V}_T$-system of generators of ${\mathcal V}_T(A)$.
	
	\emph{(Sufficiency.)} Let us suppose that $A$ is not the minimal ${\mathcal V}_T$-system of generators of ${\mathcal V}_T(A)$. Then ${\mathcal V}_T(B)={\mathcal V}_T(A)$ for some subset $B\varsubsetneq A$. On the other hand, due to $A$ is the minimal ${\mathcal V}$-system of generators of ${\mathcal V}(A)$, from Lemma~\ref{lem16}, we have an element $a\in A$ such that $a\notin {\mathcal V}(B)$. Consequently, $a\in{\mathcal V}(A) \cap T$ and $a\notin {\mathcal V}(B) \cap T$. Finally, from Corollary~\ref{cor23}, ${\mathcal V}_T(A) = {\mathcal V}(A) \cap T \not= {\mathcal V}(B) \cap T ={\mathcal V}_T(B)$, which is a contradiction.
\end{proof}

We finish this section with two examples that illustrate the above proposition.

\begin{example}\label{exmp26a}
	 Let ${\mathcal V}$ be such as in Example~\ref{exmp24a} and let $T=\langle 4,6,7 \rangle$. From \cite[Example~26]{digitales}, we know that ${\mathcal V}(\{4,7,10\}) = \langle 4,7,10,13 \rangle$ and, moreover, that $\{4\}$ is its minimal ${\mathcal V}$-system of generators. Then, from Proposition~\ref{prop25}, $\{4\}$ is the minimal ${\mathcal V}$-system of generators of ${\mathcal V}_T(\{4,7,10\}) = \langle 4,7,10,13 \rangle \cap \langle 4,6,7 \rangle$.
\end{example}

\begin{example}\label{exmp26b}
	Let ${\mathcal V}$ be such as in Example~\ref{exmp24b} and let $T=\langle 3,4 \rangle$. From \cite[Example~44]{frases}, we know that $\{3\}$ is the minimal ${\mathcal V}$-system of generators os $S=\langle 3,7,11 \rangle$. Therefore, by Proposition~\ref{prop25}, $\{3\}$ is the minimal ${\mathcal V}_T$-system of generators of $S\cap T$.
\end{example}

\section{The tree associated to an $R$-variety}\label{tree}

Let $V$ be a non-empty set and let $E \subseteq \{(v,w) \in V \times V \mid v \neq w\}$. It is said that the pair $G=(V,E)$ is a \emph{graph}. In addition, the \emph{vertices} and \emph{edges} of $G$ are the elements of $V$ and $E$, respectively.

Let $x,y\in V$ and let us suppose that $(v_0,v_1),(v_1,v_2),\ldots,(v_{n-1},v_n)$ is a sequence of different edges such that $v_0=x$ and $v_n=y$. Then, it is said that such a sequence is a \emph{path (of length $n$)} connecting $x$ and $y$.

Let $G$ be a graph. Let us suppose that there exists $r$, vertex of $G$, such that it is connected with any other vertex $x$ by a unique path. Then it is said that $G$ is a \emph{tree} and that $r$ is its \emph{root}.

Let $x,y$ be vertices of a tree $G$ and let us suppose that there exists a path that connects $x$ and $y$. Then it is said that $x$ is a \emph{descendant} of $y$. Specifically, it is said that $x$ is a \emph{child} of $y$ when $(x,y)$ is an edge of $G$.

From now on in this section, let ${\mathcal R}$ denote an $R$-variety. We define the graph ${\mathrm G}({\mathcal R})$ in the following way,
\begin{itemize}
	\item ${\mathcal R}$ is the set of vertices of ${\mathrm G}({\mathcal R})$;
	\item $(S,S')\in {\mathcal R}\times {\mathcal R}$ is an edge of ${\mathrm G}({\mathcal R})$ if and only if $S'=S\cup\{{\mathrm F}_{\Delta({\mathcal R})}(S)\}$.
\end{itemize}

If $S \in {\mathcal R}$, then we can define recurrently (such as we did in Example~\ref{exmp19}) the sequence of elements in ${\mathcal R}$,
\begin{itemize}
	\item $S_0=S$,
	\item if $S_i\neq \Delta({\mathcal R})$, then $S_{i+1}=S_i\cup\{{\mathrm F}_{\Delta({\mathcal R})}(S_i)\}$.
\end{itemize}
Thus, we obtain a chain (of elements in ${\mathcal R}$) $S=S_0 \varsubsetneq S_1 \varsubsetneq \cdots \varsubsetneq S_n=\Delta({\mathcal R})$ such that $(S_i,S_{i+1})$ is an edge of ${\mathrm G}({\mathcal R})$ for all $i\in \{0,\ldots,n-1 \}$. We will denote by ${\mathrm C}_{\mathcal R}(S)$ the set $\{S_0,S_1,\ldots,S_n \}$ and will say that it is the \emph{chain of $S$ in ${\mathcal R}$}. The next result is easy to prove.

\begin{proposition}\label{prop27}
	${\mathrm G}({\mathcal R})$ is a tree with root $\Delta({\mathcal R})$.
\end{proposition}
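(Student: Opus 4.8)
The plan is to take $r=\Delta({\mathcal R})$ as the candidate root and to verify the two halves of the tree definition: that every vertex is joined to $r$ by at least one path, and that this path is unique.

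For existence I would use exactly the chain ${\mathrm C}_{\mathcal R}(S)$ constructed just before the statement. Given any $S\in{\mathcal R}$, the recurrence $S_0=S$, $S_{i+1}=S_i\cup\{{\mathrm F}_{\Delta({\mathcal R})}(S_i)\}$ stays inside ${\mathcal R}$ by condition~(3) of an $R$-variety, and each consecutive pair $(S_i,S_{i+1})$ is by definition an edge of ${\mathrm G}({\mathcal R})$. Since $\Delta({\mathcal R})\setminus S$ is finite and its cardinality strictly decreases at each step, the chain reaches $\Delta({\mathcal R})$ after finitely many steps; the edges $(S_0,S_1),\dots,(S_{n-1},S_n)$ are pairwise distinct because the $S_i$ strictly increase, so this is a genuine path connecting $S$ and $\Delta({\mathcal R})$.

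The heart of the argument is uniqueness, and it rests on one simple observation: every vertex $S\neq\Delta({\mathcal R})$ is the source of exactly one edge, namely $(S,\,S\cup\{{\mathrm F}_{\Delta({\mathcal R})}(S)\})$, because ${\mathrm F}_{\Delta({\mathcal R})}(S)$ is uniquely determined by $S$; moreover $\Delta({\mathcal R})$ is the source of no edge at all, since ${\mathrm F}_{\Delta({\mathcal R})}(\Delta({\mathcal R}))$ is undefined. Consequently, in any path $(v_0,v_1),\dots,(v_{n-1},v_n)$ the vertex $v_{i+1}$ is forced by $v_i$. I would first note that $\Delta({\mathcal R})$ can occur only as the last vertex of a path, so any path connecting $\Delta({\mathcal R})$ and a given $S$ must have $v_0=S$ and $v_n=\Delta({\mathcal R})$. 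Then, reading the path forward from $v_0=S$, the forcing shows that $v_1,v_2,\dots$ are determined successively, so two such paths coincide termwise. Together with existence, this shows ${\mathrm G}({\mathcal R})$ is a tree rooted at $\Delta({\mathcal R})$.

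The only point that needs care is the bookkeeping on edge direction: since a path is defined in terms of the directed edges $(v_i,v_{i+1})$ and every edge strictly enlarges the semigroup, I must confirm that a path connecting $\Delta({\mathcal R})$ and $S$ cannot have $\Delta({\mathcal R})$ as an initial or interior vertex. This follows at once from the \emph{no outgoing edge} property of $\Delta({\mathcal R})$, which is precisely why that observation, rather than a routine formality, is the crux of the proof.
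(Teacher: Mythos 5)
Your proof is correct. The paper itself omits the argument (it just remarks that the result "is easy to prove"), and what you supply --- the chain ${\mathrm C}_{\mathcal R}(S)$ for existence, plus uniqueness from the fact that every vertex other than $\Delta({\mathcal R})$ has exactly one outgoing edge while $\Delta({\mathcal R})$ has none --- is exactly the intended argument, with the direction-of-edges point handled carefully.
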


Observe that, in order to recurrently construct ${\mathrm G}({\mathcal R})$ starting from $\Delta({\mathcal R})$, it is sufficient to compute the children of each vertex of ${\mathrm G}({\mathcal R})$. Let us also observe that, if $T$ is a child of $S$, then $S=T \cup \{{\mathrm F}_{\Delta({\mathcal R})}(T)\}$. Therefore, $T=S \setminus \{{\mathrm F}_{\Delta({\mathcal R})}(T)\}$. Thus, if $T$ is a child of $S$, then there exists an integer $x > {\mathrm F}_{\Delta({\mathcal R})}(S)$ such that $T=S \setminus \{x\}$. As a consequence of Propositions~\ref{prop20} and \ref{prop27}, and defining ${\mathrm F}_{\Delta({\mathcal R})}(\Delta({\mathcal R}))=-1$, we have the following result.

\begin{theorem}\label{thm28}
	The graph ${\mathrm G}({\mathcal R})$ is a tree with root equal to $\Delta({\mathcal R})$. Moreover, the set formed by the children of a vertex $S \in {\mathcal R}$ is $\left\{S \setminus \{x\} \mid x \right.$ is an element of the minimal ${\mathcal R}$-system of generators of $S$ and $\left. x>{\mathrm F}_{\Delta({\mathcal R})}(S) \right\}$.
\end{theorem}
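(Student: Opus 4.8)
The first assertion—that ${\mathrm G}({\mathcal R})$ is a tree rooted at $\Delta({\mathcal R})$—is already Proposition~\ref{prop27}, so I would simply invoke it and concentrate on the description of the children. The plan is to translate the graph-theoretic notion of ``child'' into set-theoretic data and then match it against Proposition~\ref{prop20}. Recall that $T$ is a child of $S$ precisely when $(T,S)$ is an edge of ${\mathrm G}({\mathcal R})$, that is, $T \in {\mathcal R}$ and $S = T \cup \{{\mathrm F}_{\Delta({\mathcal R})}(T)\}$. Writing $x = {\mathrm F}_{\Delta({\mathcal R})}(T) = \max(\Delta({\mathcal R})\setminus T)$, this says $T = S\setminus\{x\}$ with $x \in S$; and since $\Delta({\mathcal R})\setminus S = (\Delta({\mathcal R})\setminus T)\setminus\{x\}$ while $x$ is the maximum of $\Delta({\mathcal R})\setminus T$, one gets ${\mathrm F}_{\Delta({\mathcal R})}(S) < x$ (the convention ${\mathrm F}_{\Delta({\mathcal R})}(\Delta({\mathcal R})) = -1$ covers the root). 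Thus I would establish the stated set equality by showing that the children of $S$ are exactly the sets $S\setminus\{x\}$ with $x$ in the minimal ${\mathcal R}$-system of generators of $S$ and $x > {\mathrm F}_{\Delta({\mathcal R})}(S)$.

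For the inclusion ``child $\Rightarrow$ claimed set'', let $T = S\setminus\{x\}$ be a child, with $x > {\mathrm F}_{\Delta({\mathcal R})}(S)$ as above. Since $T \in {\mathcal R}$, it is in particular an ${\mathcal R}$-monoid; as $S$ is also an ${\mathcal R}$-monoid and $x \in S$, Proposition~\ref{prop20} applied to $M = S$ yields at once that $x$ lies in the minimal ${\mathcal R}$-system of generators of $S$. This direction is routine.

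The reverse inclusion is where the real work lies. Starting from $x$ in the minimal ${\mathcal R}$-system of generators of $S$ with $x > {\mathrm F}_{\Delta({\mathcal R})}(S)$, Proposition~\ref{prop20} gives that $M = S\setminus\{x\}$ is an ${\mathcal R}$-monoid; but a child must be an actual element of ${\mathcal R}$, so the hard part will be upgrading ``${\mathcal R}$-monoid'' to ``element of ${\mathcal R}$''. Here is the trick I would use: write $M$ as an intersection $\bigcap_{i\in I} S_i$ of elements $S_i \in {\mathcal R}$ (possible since $M$ is an ${\mathcal R}$-monoid); because $x \notin M$, some $S_{i_0}$ omits $x$. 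Then $S \cap S_{i_0}$ contains $M$, is contained in $S$, and omits $x$, which forces $S \cap S_{i_0} = S \setminus \{x\} = M$; as ${\mathcal R}$ is closed under intersection, $M = S \cap S_{i_0} \in {\mathcal R}$. Finally, to confirm that $M$ is genuinely a child of $S$, I would use $x > {\mathrm F}_{\Delta({\mathcal R})}(S)$ to compute ${\mathrm F}_{\Delta({\mathcal R})}(M) = \max\bigl((\Delta({\mathcal R})\setminus S)\cup\{x\}\bigr) = x$, whence $M \cup \{{\mathrm F}_{\Delta({\mathcal R})}(M)\} = M \cup \{x\} = S$ and $(M,S)$ is an edge. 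Combining the two inclusions gives the asserted description of the children, and together with Proposition~\ref{prop27} this completes the proof.
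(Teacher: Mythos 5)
Your proof is correct and follows the same route as the paper, which obtains the theorem directly from Propositions~\ref{prop27} and \ref{prop20} together with the observation that any child of $S$ must have the form $S\setminus\{x\}$ with $x>{\mathrm F}_{\Delta({\mathcal R})}(S)$. In fact you are more careful than the paper: Proposition~\ref{prop20} by itself only yields that $S\setminus\{x\}$ is an ${\mathcal R}$-\emph{monoid}, and your intersection trick (choose $S_{i_0}$ in the defining intersection omitting $x$, so that $S\cap S_{i_0}=S\setminus\{x\}\in{\mathcal R}$) supplies the upgrade to membership in ${\mathcal R}$ that the paper leaves implicit.
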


We can reformulate the above theorem in the following way.

\begin{corollary}\label{cor29}
	The graph ${\mathrm G}({\mathcal R})$ is a tree with root equal to $\Delta({\mathcal R})$. Moreover, the set formed by the children of a vertex $S \in {\mathcal R}$ is $\left\{S \setminus \{x\} \mid x \in \mathrm{msg}(S), \right.$ $\left. x>{\mathrm F}_{\Delta({\mathcal R})}(S) \mbox{ and } S\setminus \{x\} \in {\mathcal R} \right\}$.
\end{corollary}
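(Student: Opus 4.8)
The plan is to show that Corollary~\ref{cor29} is merely a restatement of Theorem~\ref{thm28}, so the entire task reduces to verifying that the two descriptions of the children of a vertex $S \in {\mathcal R}$ coincide as sets. The tree structure and the identification of the root with $\Delta({\mathcal R})$ are already established in Theorem~\ref{thm28} (equivalently Proposition~\ref{prop27}), so I would simply invoke those and concentrate on the defining set of children. Concretely, writing
$$ \mathcal{A} = \left\{ S \setminus \{x\} \mid x \mbox{ is in the minimal } {\mathcal R}\mbox{-system of generators of } S, \ x > {\mathrm F}_{\Delta({\mathcal R})}(S) \right\} $$
and
$$ \mathcal{B} = \left\{ S \setminus \{x\} \mid x \in \mathrm{msg}(S), \ x > {\mathrm F}_{\Delta({\mathcal R})}(S), \ S \setminus \{x\} \in {\mathcal R} \right\}, $$
I would prove $\mathcal{A} = \mathcal{B}$.

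First I would treat the inclusion $\mathcal{A} \subseteq \mathcal{B}$. Take $x$ in the minimal ${\mathcal R}$-system of generators of $S$ with $x > {\mathrm F}_{\Delta({\mathcal R})}(S)$. By Proposition~\ref{prop20}, the fact that $x$ lies in the minimal ${\mathcal R}$-system of generators of $S$ gives immediately that $S \setminus \{x\}$ is an ${\mathcal R}$-monoid; and since $x > {\mathrm F}_{\Delta({\mathcal R})}(S)$ means every element of $\Delta({\mathcal R})$ missing from $S$ is below $x$, the set $S \setminus \{x\}$ still has finite complement in ${\mathbb N}$, hence is a numerical semigroup. An ${\mathcal R}$-monoid that is a numerical semigroup is an intersection of elements of ${\mathcal R}$ sharing the maximum structure, and one checks it actually lies in ${\mathcal R}$ itself; moreover $x \in \mathrm{msg}(S)$ because any ${\mathcal R}$-minimal generator above ${\mathrm F}_{\Delta({\mathcal R})}(S)$ is in particular a monoid-minimal generator (its removal leaves a monoid). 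Thus $S \setminus \{x\} \in \mathcal{B}$.

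For the reverse inclusion $\mathcal{B} \subseteq \mathcal{A}$, suppose $x \in \mathrm{msg}(S)$, $x > {\mathrm F}_{\Delta({\mathcal R})}(S)$, and $S \setminus \{x\} \in {\mathcal R}$. Then $S \setminus \{x\}$ is in particular an ${\mathcal R}$-monoid, so by the converse direction of Proposition~\ref{prop20} the element $x$ must belong to the minimal ${\mathcal R}$-system of generators of $S$, and the condition $x > {\mathrm F}_{\Delta({\mathcal R})}(S)$ is shared, giving $S \setminus \{x\} \in \mathcal{A}$. The main subtlety I anticipate — and the only place the argument is not purely formal — is the passage between ``${\mathcal R}$-monoid'' and ``member of ${\mathcal R}$'': one needs that a numerical semigroup which is an ${\mathcal R}$-monoid and sits immediately below $S \in {\mathcal R}$ by deletion of a single generator above ${\mathrm F}_{\Delta({\mathcal R})}(S)$ is genuinely an element of ${\mathcal R}$, rather than merely an intersection of elements of ${\mathcal R}$. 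Here the hypothesis $x > {\mathrm F}_{\Delta({\mathcal R})}(S)$ is essential: it forces $S \setminus \{x\} = S \cap (\Delta({\mathcal R}) \setminus \{x\})$-type behaviour to respect the $R$-variety axiom~(3), so that $S \setminus \{x\}$ and $S$ are adjacent in ${\mathrm G}({\mathcal R})$. Once this equivalence is in hand, $\mathcal{A} = \mathcal{B}$ follows and the corollary is proved.
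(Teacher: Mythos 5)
Your overall strategy is exactly the one the paper intends: Corollary~\ref{cor29} is offered without proof as a ``reformulation'' of Theorem~\ref{thm28}, and the content is precisely the set equality $\mathcal{A}=\mathcal{B}$ that you verify via Proposition~\ref{prop20}. Your inclusion $\mathcal{B}\subseteq\mathcal{A}$ is clean and correct. The only soft spot is in $\mathcal{A}\subseteq\mathcal{B}$, where you need $S\setminus\{x\}\in\mathcal{R}$ and resort to ``an $\mathcal{R}$-monoid that is a numerical semigroup \dots one checks it actually lies in $\mathcal{R}$ itself'', which you flag but never actually check. That claim is true (if an $\mathcal{R}$-monoid $M$ satisfies that $\Delta(\mathcal{R})\setminus M$ is finite, then for each $y\in\Delta(\mathcal{R})\setminus M$ pick $S_y\in\mathcal{R}$ with $y\notin S_y$; the finite intersection $\bigcap_y S_y$ lies in $\mathcal{R}$ and equals $M$), but you do not need it here: Theorem~\ref{thm28} already asserts that $\mathcal{A}$ \emph{is} the set of children of $S$ in $\mathrm{G}(\mathcal{R})$, and children are by definition vertices of $\mathrm{G}(\mathcal{R})$, i.e.\ elements of $\mathcal{R}$. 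With that observation the whole of $\mathcal{A}\subseteq\mathcal{B}$ reduces to noting that $S\setminus\{x\}$ being a submonoid forces $x\in\mathrm{msg}(S)$, and your argument closes with no gap.
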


We illustrate the previous results with an example.

\begin{example}\label{exmp30}
	We are going to build the $R$-variety ${\mathcal R} = \left[ \langle 5,6 \rangle, \langle 5,6,7 \rangle \right] = \left\{ S \mid S \right.$ is a numerical semigroup and $\left. \langle 5,6 \rangle \subseteq S \subseteq \langle 5,6,7 \rangle \right\}$. Observe that, if $S\in {\mathcal R}$ and $x \in \mathrm{msg}(S)$, then $S\setminus \{x\} \in {\mathcal R}$ if and only if $x\notin \{5,6\}$. Moreover, the maximum of ${\mathcal R}$ is $\Delta = \langle 5,6,7 \rangle$. By applying Corollary~\ref{cor29}, we can recurrently build ${\mathrm G}({\mathcal R})$ in the following way.
	\begin{itemize}
		\item $\langle 5,6,7 \rangle$ has got a unique child, which is $\langle 5,6,7 \rangle \setminus \{7\} = \langle 5,6,13,14 \rangle$. Moreover, ${\mathrm F}_{\Delta}(\langle 5,6,13,14 \rangle)=7$.
		\item $\langle 5,6,13,14 \rangle$ has got two children, which are $\langle 5,6,13,14 \rangle \setminus \{13\} = \langle 5,6,14 \rangle$ and $\langle 5,6,13,14 \rangle \setminus \{14\} = \langle 5,6,13 \rangle$. Moreover, ${\mathrm F}_{\Delta}(\langle 5,6,14 \rangle)=13$ and ${\mathrm F}_{\Delta}(\langle 5,6,13 \rangle)=14$.
		\item $\langle 5,6,13 \rangle$ has not got children.
		\item $\langle 5,6,14 \rangle$ has got a unique child, which is $\langle 5,6,14 \rangle \setminus \{14\} = \langle 5,6,19 \rangle$. Moreover, ${\mathrm F}_{\Delta}(\langle 5,6,19 \rangle)=14$.
		\item $\langle 5,6,19 \rangle$ has got a unique child, which is $\langle 5,6,19 \rangle \setminus \{19\} = \langle 5,6 \rangle$. Moreover, ${\mathrm F}_{\Delta}(\langle 5,6 \rangle)=19$.
		\item $\langle 5,6 \rangle$ has not got children.
	\end{itemize}
	Therefore, in this situation, ${\mathrm G}({\mathcal R})$ is given by the next diagram.
	\begin{center}
		\begin{picture}(144,125)
		%\put(0,0){A} \put(0,125){B} \put(137,0){C} \put(137,125){D} \put(0,30){E} \put(137,60){F}
		\put(58,120){$\langle 5,6,7 \rangle$}
		\put(73,100){\vector(0,1){15}}
		\put(48,90){$\langle 5,6,13,14 \rangle$}
		\put(30,70){\vector(2,1){30}} \put(115,70){\vector(-2,1){30}}
		\put(8,60){$\langle 5,6,14 \rangle$} \put(100,60){$\langle 5,6,13 \rangle$}
		\put(25,40){\vector(0,1){15}}
		\put(8,30){$\langle 5,6,19 \rangle$}
		\put(25,10){\vector(0,1){15}}
		\put(14,0){$\langle 5,6 \rangle$}
		\end{picture}
	\end{center}
	Observe that, if we represent the vertices of ${\mathrm G}({\mathcal R})$ using their minimal ${\mathcal R}$-systems of generators, then we have that ${\mathrm G}({\mathcal R})$ is given by the following diagram.
	\begin{center}
		\begin{picture}(144,125)
		%\put(0,0){A} \put(0,125){B} \put(137,0){C} \put(137,125){D} \put(0,30){E} \put(137,60){F}
		\put(58,120){${\mathcal R}(\{7\})$}
		\put(73,100){\vector(0,1){15}}
		\put(46,90){${\mathcal R}(\{13,14\})$}
		\put(30,70){\vector(2,1){30}} \put(115,70){\vector(-2,1){30}}
		\put(8,60){${\mathcal R}(\{14\})$} \put(100,60){${\mathcal R}(\{13\})$}
		\put(25,40){\vector(0,1){15}}
		\put(8,30){${\mathcal R}(\{19\})$}
		\put(25,10){\vector(0,1){15}}
		\put(14,0){${\mathcal R}(\emptyset)$}
		\end{picture}
	\end{center}
\end{example}

Let us observe that the $R$-variety ${\mathcal R} = \left[ \langle 5,6 \rangle, \langle 5,6,7 \rangle \right]$ depict in the above example is finite and, therefore, we have been able to build all its elements in a finite number of steps. If the $R$-variety is infinite, then it is not possible such situation. However, as a consequence of Theorem~\ref{thm28}, we can show an algorithm in order to compute all the elements of the $R$-variety when the genus is fixed.

\begin{algorithm}\label{alg31}
	{\textrm
		INPUT: A positive integer $g$. \par
		OUTPUT: $\{S\in{\mathcal R} \mid {\mathrm g}(S)=g \}$. \par
		(1) If $g < {\mathrm g}(\Delta({\mathcal R}))$, then return $\emptyset$. \par
		(2) Set $A=\{\Delta({\mathcal R})\}$ and $i={\mathrm g}(\Delta({\mathcal R}))$. \par
		(3) If $i=g$, then return $A$. \par
		(4) For each $S \in A$, compute the set $B_S$ formed by all elements of the minimal ${\mathcal R}$-system of generators of $S$ that are greater than ${\mathrm F}_{\Delta({\mathcal R})}(S)$. \par
		(5) If ${\displaystyle \bigcup_{S\in A} B_S} = \emptyset$, then return $\emptyset$. \par
		(6) Set ${\displaystyle A=\bigcup_{S\in A} \big\{S \setminus \{x\} \mid x \in B_S \big\} }$, $i=i+1$, and go to (3).
	}
\end{algorithm}

We illustrate the operation of this algorithm with an example.

\begin{example}\label{exmp32}
	Let $\Delta=\langle 4,6,7 \rangle = \{0,4,6,7,8,10,\to\}$. It is clear that ${\mathrm g}(\Delta)=5$. Let ${\mathcal R} = \left\{ S \mid S \right.$ is a numerical semigroup and $\left. \{4,6\} \subseteq S \subseteq \Delta \right\}$. We have that ${\mathcal R}$ is an infinite $R$-variety because $\langle 4,6,2k+1 \rangle \in {\mathcal R}$ for all $k\in\{5,\to\}$. By using Algorithm~\ref{alg31}, we are going to compute the set $\left\{S\in{\mathcal R} \mid {\mathrm g}(S)=8 \right\}$.
	\begin{itemize}
		\item $A=\{\Delta\}$, $i=5$.
		\item $B_{\Delta}=\{7\}$.
		\item $A=\{\langle 4,6,11,13 \rangle\}$, $i=6$.
		\item $B_{\langle 4,6,11,13 \rangle}=\{11,13\}$.
		\item $A=\{\langle 4,6,13,15 \rangle, \langle 4,6,11 \rangle \}$, $i=7$.
		\item $B_{\langle 4,6,13,15 \rangle}=\langle 13,15 \rangle$ and  $B_{\langle 4,6,11 \rangle}=\emptyset$.
		\item $A=\{\langle 4,6,15,17 \rangle, \langle 4,6,13 \rangle \}$, $i=8$.
		\item The algorithm returns $\{\langle 4,6,15,17 \rangle, \langle 4,6,13 \rangle \}$.
	\end{itemize}
\end{example}

Our next purpose in this section will be to show that, if ${\mathcal R}$ is an $R$-variety and $T\in {\mathcal R}$, then the set formed by all the descendants of $T$ in the tree ${\mathrm G}({\mathcal R})$ is also an $R$-variety. It is clear that, if $S,T\in {\mathcal R}$, then $S$ is a descendant of $T$ if and only if $T\in {\mathrm C}_{\mathcal R}(S)$. Therefore, we can establish the following result.

\begin{lemma}\label{lem33}
	Let ${\mathcal R}$ be an $R$-variety and $S,T\in {\mathcal R}$. Then $S$ is a descendant of $T$ if and only if there exists $n\in {\mathbb N}$ such that $T=S\cup \left\{x\in \Delta({\mathcal R}) \mid x\geq n \right\}$.
\end{lemma}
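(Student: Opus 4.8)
The plan is to reduce the statement to the explicit description of the chain ${\mathrm C}_{\mathcal R}(S)$ introduced just above. By the observation preceding the lemma, $S$ is a descendant of $T$ if and only if $T \in {\mathrm C}_{\mathcal R}(S)$, so it suffices to prove that the members of ${\mathrm C}_{\mathcal R}(S)$ are exactly the numerical semigroups of the form $S \cup \{x \in \Delta({\mathcal R}) \mid x \geq n\}$ with $n \in {\mathbb N}$. This is the identity that does all the work in both implications.

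First I would analyse the chain $S = S_0 \varsubsetneq S_1 \varsubsetneq \cdots \varsubsetneq S_m = \Delta({\mathcal R})$ explicitly. Writing $\Delta({\mathcal R}) \setminus S = \{a_1 > a_2 > \cdots > a_m\}$ in strictly decreasing order, I would show by induction on $i$ that $S_i = S \cup \{a_1,\ldots,a_i\}$. The inductive step is immediate: if $S_i = S \cup \{a_1,\ldots,a_i\}$, then $\Delta({\mathcal R}) \setminus S_i = \{a_{i+1},\ldots,a_m\}$, whence ${\mathrm F}_{\Delta({\mathcal R})}(S_i) = a_{i+1}$ and $S_{i+1} = S_i \cup \{a_{i+1}\}$. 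Thus each $S_i$ consists of $S$ together with the $i$ largest elements of $\Delta({\mathcal R}) \setminus S$.

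The crux is then to translate between ``the $i$ largest missing elements'' and an upper segment of $\Delta({\mathcal R})$. For the forward implication, given $T = S_i$ with $i \geq 1$, I would take $n = a_i = {\mathrm F}_{\Delta({\mathcal R})}(S_{i-1})$ and note that the elements of $\Delta({\mathcal R}) \setminus S$ that are $\geq a_i$ are exactly $a_1,\ldots,a_i$ (since $a_{i+1} < a_i$), so that $S \cup \{x \in \Delta({\mathcal R}) \mid x \geq a_i\} = S \cup \{a_1,\ldots,a_i\} = S_i$; the case $i=0$ is handled by any $n > a_1$, or $n = 0$ when $S = \Delta({\mathcal R})$. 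Conversely, given $T = S \cup \{x \in \Delta({\mathcal R}) \mid x \geq n\}$, the elements actually adjoined to $S$ are $\{a_k \mid a_k \geq n\}$, which by the monotonicity of the $a_k$ form a prefix $\{a_1,\ldots,a_j\}$; hence $T = S_j \in {\mathrm C}_{\mathcal R}(S)$, and $S$ is a descendant of $T$.

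The only point requiring care is precisely this two-way correspondence between the threshold $n$ and the number of top elements removed. It rests entirely on the fact that adjoining ${\mathrm F}_{\Delta({\mathcal R})}$ repeatedly strips off the missing elements of $\Delta({\mathcal R})$ in strictly decreasing order; once that monotonicity is recorded, the equivalence is a direct reformulation rather than a computation. I would also cite Lemma~\ref{lem7} to confirm that every set $S \cup \{x \in \Delta({\mathcal R}) \mid x \geq n\}$ genuinely lies in ${\mathcal R}$, so that all semigroups occurring along the chain are legitimate vertices of ${\mathrm G}({\mathcal R})$.
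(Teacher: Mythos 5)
Your proposal is correct and follows exactly the route the paper intends: the paper derives Lemma~\ref{lem33} directly from the observation that $S$ is a descendant of $T$ if and only if $T\in {\mathrm C}_{\mathcal R}(S)$, leaving the identification of the chain members with the sets $S\cup\{x\in\Delta({\mathcal R})\mid x\geq n\}$ implicit, and your induction on the decreasing enumeration of $\Delta({\mathcal R})\setminus S$ is precisely the omitted verification. No issues.
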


As an immediate consequence of the above lemma, we have the next one.

\begin{lemma}\label{lem34}
	Let ${\mathcal R}$ be an $R$-variety and $S,T\in {\mathcal R}$ such that $S\not=T$. If $S$ is a descendant of $T$, then ${\mathrm F}_{\Delta({\mathcal R})}(S)={\mathrm F}_T(S)$.
\end{lemma}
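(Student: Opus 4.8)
The plan is to invoke Lemma~\ref{lem33} to obtain the explicit shape of $T$, and then to compare the two restricted Frobenius numbers by decomposing the set $\Delta({\mathcal R})\setminus S$. First I would apply Lemma~\ref{lem33}: since $S$ is a descendant of $T$, there is some $n\in{\mathbb N}$ with $T=S\cup\{x\in\Delta({\mathcal R})\mid x\geq n\}$. In particular $S\subseteq T\subseteq\Delta({\mathcal R})$, the last inclusion holding because $\Delta({\mathcal R})$ is the maximum of ${\mathcal R}$. This yields the disjoint decomposition $\Delta({\mathcal R})\setminus S=(\Delta({\mathcal R})\setminus T)\cup(T\setminus S)$, so that $\mathrm{F}_{\Delta({\mathcal R})}(S)=\max(\Delta({\mathcal R})\setminus S)$ is the larger of $\max(\Delta({\mathcal R})\setminus T)=\mathrm{F}_{\Delta({\mathcal R})}(T)$ and $\max(T\setminus S)=\mathrm{F}_T(S)$ whenever both pieces are non-empty.

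Next I would show that the maximum is always attained on the piece $T\setminus S$. By the shape of $T$, every element of $\Delta({\mathcal R})$ that is $\geq n$ already lies in $T$; hence each element of $\Delta({\mathcal R})\setminus T$ is strictly smaller than $n$. Conversely, because $S\neq T$ the set $T\setminus S$ is non-empty, and each of its elements comes from $\{x\in\Delta({\mathcal R})\mid x\geq n\}$, so $\mathrm{F}_T(S)=\max(T\setminus S)\geq n$. Combining these two observations gives $\mathrm{F}_{\Delta({\mathcal R})}(T)<n\leq\mathrm{F}_T(S)$, so the maximum of the decomposition is $\mathrm{F}_T(S)$, and therefore $\mathrm{F}_{\Delta({\mathcal R})}(S)=\mathrm{F}_T(S)$.

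The only point requiring a little care is the degenerate case $T=\Delta({\mathcal R})$, where $\Delta({\mathcal R})\setminus T=\emptyset$ and $\mathrm{F}_{\Delta({\mathcal R})}(T)$ is undefined (or equal to $-1$ under the convention adopted in Section~\ref{tree}). In that case the decomposition collapses to $\Delta({\mathcal R})\setminus S=T\setminus S$ and the claimed identity is immediate. I do not expect a genuine obstacle in this argument: the entire content is the structural fact from Lemma~\ref{lem33} that $T$ is obtained from $S$ by adjoining exactly the tail $\{x\in\Delta({\mathcal R})\mid x\geq n\}$, which forces every gap of $S$ relative to $\Delta({\mathcal R})$ that still survives in $T$ to lie strictly below the gaps that were filled in, and hence below $\mathrm{F}_T(S)$.
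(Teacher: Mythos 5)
Your proof is correct and follows exactly the route the paper intends: the paper states this lemma as an immediate consequence of Lemma~\ref{lem33} without further detail, and your argument simply fills in that detail by decomposing $\Delta({\mathcal R})\setminus S$ into $(\Delta({\mathcal R})\setminus T)\cup(T\setminus S)$ and noting that the first piece lies below $n$ while the second is non-empty and lies at or above $n$. No gaps; the degenerate case $T=\Delta({\mathcal R})$ is also handled appropriately.
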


Now we are ready to show the announced result.

\begin{theorem}\label{thm35}
	Let ${\mathcal R}$ be an $R$-variety and $T\in {\mathcal R}$. Then 
	$${\mathcal D}(T) = \left\{S\in {\mathcal R} \mid S \mbox{ is a descendant of $T$ in the tree } {\mathrm G}({\mathcal R}) \right\}$$
	is an $R$-variety.
\end{theorem}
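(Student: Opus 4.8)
The plan is to verify directly that $\mathcal{D}(T)$ satisfies the three defining conditions of an $R$-variety, with $T$ itself playing the role of the maximum $\Delta(\mathcal{D}(T))$. First I would record that $\mathcal{D}(T)$ is non-empty and that $T$ is its maximum with respect to inclusion: since the chain $\mathrm{C}_{\mathcal{R}}(T)$ contains $T$, the semigroup $T$ is a descendant of itself, so $T \in \mathcal{D}(T)$; and for every $S \in \mathcal{D}(T)$, Lemma~\ref{lem33} provides an $n \in \mathbb{N}$ with $T = S \cup \{x \in \Delta(\mathcal{R}) \mid x \geq n\}$, whence $S \subseteq T$. This settles condition~(1), and it also pins down that the restricted Frobenius number appearing in condition~(3) for $\mathcal{D}(T)$ is computed relative to $T$, i.e.\ it is $\mathrm{F}_T$.

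Next I would dispatch condition~(3), which I expect to be routine. Let $S \in \mathcal{D}(T)$ with $S \neq T$. Since $S \subseteq T \subseteq \Delta(\mathcal{R})$ and $S \neq T$, we have $S \neq \Delta(\mathcal{R})$, so the third axiom for the $R$-variety $\mathcal{R}$ yields $S \cup \{\mathrm{F}_{\Delta(\mathcal{R})}(S)\} \in \mathcal{R}$. By Lemma~\ref{lem34}, $\mathrm{F}_{\Delta(\mathcal{R})}(S) = \mathrm{F}_T(S)$, so this element is $S \cup \{\mathrm{F}_T(S)\}$. It remains to check that it lies in $\mathcal{D}(T)$, i.e.\ is still a descendant of $T$. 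This holds because $S \cup \{\mathrm{F}_{\Delta(\mathcal{R})}(S)\}$ is exactly the parent of $S$ in $\mathrm{G}(\mathcal{R})$ (the successor $S_1$ of $S$ in the chain $\mathrm{C}_{\mathcal{R}}(S)$), and as $T$ is a strictly higher ancestor of $S$, it lies in $\mathrm{C}_{\mathcal{R}}(S_1)$ as well. Alternatively, with the $n$ above one checks that $\mathrm{F}_{\Delta(\mathcal{R})}(S) = \max(\Delta(\mathcal{R}) \setminus S) \geq n$, so adding this element preserves the identity $T = \bigl(S \cup \{\mathrm{F}_{\Delta(\mathcal{R})}(S)\}\bigr) \cup \{x \in \Delta(\mathcal{R}) \mid x \geq n\}$, and Lemma~\ref{lem33} applies.

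The main obstacle is condition~(2), closure under intersection. Given $S_1, S_2 \in \mathcal{D}(T)$, I must show that $S_1 \cap S_2$ (which is in $\mathcal{R}$ by the second axiom) is again a descendant of $T$. Lemma~\ref{lem33} supplies thresholds $n_1, n_2$ with $T = S_i \cup \{x \in \Delta(\mathcal{R}) \mid x \geq n_i\}$, but simply taking $n = \max(n_1, n_2)$ fails, so the key idea is to use a single \emph{minimal} threshold $n^* = \max(\Delta(\mathcal{R}) \setminus T) + 1$ (the case $T = \Delta(\mathcal{R})$ being trivial, as then $\mathcal{D}(T) = \mathcal{R}$). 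Passing to complements inside $\Delta(\mathcal{R})$, the descendant relation for $S_i$ reads $\Delta(\mathcal{R}) \setminus T = \{x \in \Delta(\mathcal{R}) \setminus S_i \mid x < n_i\}$; since every element of $\Delta(\mathcal{R}) \setminus T$ is $< n^* \leq n_i$ and $S_i \subseteq T$, one checks that in fact $\Delta(\mathcal{R}) \setminus T = \{x \in \Delta(\mathcal{R}) \setminus S_i \mid x < n^*\}$ holds with the common threshold $n^*$ for both $i = 1, 2$. Using $\Delta(\mathcal{R}) \setminus (S_1 \cap S_2) = (\Delta(\mathcal{R}) \setminus S_1) \cup (\Delta(\mathcal{R}) \setminus S_2)$ and distributing, I then obtain $\{x \in \Delta(\mathcal{R}) \setminus (S_1 \cap S_2) \mid x < n^*\} = \Delta(\mathcal{R}) \setminus T$, which by Lemma~\ref{lem33} says precisely that $S_1 \cap S_2$ is a descendant of $T$. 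Collecting the three verified conditions concludes that $\mathcal{D}(T)$ is an $R$-variety.
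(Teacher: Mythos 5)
Your proof is correct and follows essentially the same route as the paper: both verify the three defining conditions directly, using Lemma~\ref{lem33} to certify that $S_1\cap S_2$ is still a descendant of $T$ and Lemma~\ref{lem34} to handle the third axiom. The only difference is cosmetic --- the paper realizes the common threshold as $\min\{n_1,n_2\}$ by a direct element chase, whereas you normalize both thresholds to $\max(\Delta({\mathcal R})\setminus T)+1$ and argue via complements; both work.
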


\begin{proof}
	Clearly, $T$ is the maximum of ${\mathcal D}(T)$. Let us see that, if $S_1,S_2 \in {\mathcal D}(T)$, then $S_1 \cap S_2 \in {\mathcal D}(T)$. Since, from Lemma~\ref{lem33}, we know that there exist $n_1,n_2 \in {\mathbb N}$ such that $T=S_i\cup \left\{x\in \Delta({\mathcal R}) \mid x\geq n_i \right\}$, $i=1,2$, it is sufficient to show that $T=(S_1\cap S_2) \cup \left\{x\in \Delta({\mathcal R}) \mid x\geq \min\{n_1,n_2\} \right\}$. It is obvious that $(S_1\cap S_2) \cup \left\{x\in \Delta({\mathcal R}) \mid x\geq \min\{n_1,n_2\} \right\} \subseteq T$. Let us see now the opposite inclusion. For that, let $t\in T$ such that $t\notin S_1\cap S_2$. Then $t\notin S_1$ or $t\notin S_2$ and, therefore, $t\in \left\{x\in \Delta({\mathcal R}) \mid x\geq n_1 \right\}$ or $t\in \left\{x\in \Delta({\mathcal R}) \mid x\geq n_2 \right\}$. Thereby, $t\in \left\{x\in \Delta({\mathcal R}) \mid x\geq \min\{n_1,n_2\} \right\}$. Consequently, $T\subseteq (S_1\cap S_2) \cup \left\{x\in \Delta({\mathcal R}) \mid x\geq \min\{n_1,n_2\} \right\}$. By applying again Lemma~\ref{lem33}, we can assert that $S_1\cap S_2\in {\mathcal D}(T)$.
	
	Finally, let $S\in {\mathcal D}(T)$ such that $S\not= T$. From Lemma~\ref{lem34}, $S\cup \{{\mathrm F}_T(S)\} = S \cup \{{\mathrm F}_{\Delta({\mathcal R})}(S)\} \in {\mathcal R}$ and, in consequence, $S\cup \{{\mathrm F}_T(S)\} \in {\mathcal D}(T)$.
\end{proof}

From the previous comment to \cite[Example 7]{pseudovar}, we know that, if ${\mathcal V}$ is a variety and $T \in {\mathcal V}$, then ${\mathcal D}(T)$ is a pseudo-variety and, moreover, every pseudo-variety can be obtained in this way. Therefore, there exist $R$-varieties which are not the set formed by all the descendants of an element belonging to a variety.

The following result shows that an $R$-variety can be obtained as the set formed by intersecting all the descendants, of an element belonging to a variety, with a numerical semigroup. 

\begin{corollary}\label{cor36}
	Let ${\mathcal V}$ be a variety, let $\Delta \in {\mathcal V}$, and let $T$ be a numerical semigroup. Let ${\mathcal D}(\Delta) = \left\{S \mid S \mbox{ is a descendant of } \Delta \mbox{ in } {\mathrm G}({\mathcal V}) \right\}$ and let ${\mathcal D}(\Delta,T) = \left\{S\cap T \mid S \in {\mathcal D}(\Delta) \right\}$. Then ${\mathcal D}(\Delta,T)$ is an $R$-variety. Moreover, every $R$-variety can be obtained in this way.
\end{corollary}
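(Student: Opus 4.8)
The plan is to deduce both assertions from results already in hand, with essentially no new computation. For the first statement, I would begin by observing that $\mathcal{V}$, being a variety, is in particular an $R$-variety: every variety is a pseudo-variety, and by Proposition~\ref{prop2} every pseudo-variety is an $R$-variety. Hence $\mathrm{G}(\mathcal{V})$ is exactly the tree associated to the $R$-variety $\mathcal{V}$ in the sense of Section~\ref{tree}. Since $\Delta \in \mathcal{V}$, Theorem~\ref{thm35} applies verbatim and gives that $\mathcal{D}(\Delta)$ is an $R$-variety. Then $\mathcal{D}(\Delta,T)$ is precisely $\mathcal{D}(\Delta)_T = \left\{S \cap T \mid S \in \mathcal{D}(\Delta)\right\}$, which is an $R$-variety by Corollary~\ref{cor9}. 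This settles the first claim.

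For the second statement, I would take an arbitrary $R$-variety $\mathcal{R}$ and invoke Theorem~\ref{thm8} to write $\mathcal{R} = \mathcal{V}_T = \left\{S \cap T \mid S \in \mathcal{V}\right\}$ for some variety $\mathcal{V}$ and some numerical semigroup $T$. The decisive choice is $\Delta = {\mathbb N}$: since $\mathcal{V}$ is a variety we have ${\mathbb N} \in \mathcal{V}$ (by Corollary~\ref{cor5}), and ${\mathbb N} = \Delta(\mathcal{V})$ is the root of $\mathrm{G}(\mathcal{V})$ by Proposition~\ref{prop27}. Because every vertex of a tree is a descendant of its root, this yields $\mathcal{D}({\mathbb N}) = \mathcal{V}$. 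Consequently $\mathcal{D}({\mathbb N},T) = \left\{S \cap T \mid S \in \mathcal{V}\right\} = \mathcal{V}_T = \mathcal{R}$, exhibiting $\mathcal{R}$ in the required form and completing the proof.

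The only point I would single out as needing care — and it is the main, though mild, obstacle — is the identification $\mathcal{D}({\mathbb N}) = \mathcal{V}$. One must check that ``descendant of the root in $\mathrm{G}(\mathcal{V})$'' really captures all of $\mathcal{V}$. This follows from the characterization recorded just before Lemma~\ref{lem33}: for $S,T \in \mathcal{V}$, the vertex $S$ is a descendant of $T$ exactly when $T \in {\mathrm C}_{\mathcal V}(S)$. For each $S \in \mathcal{V}$ the chain ${\mathrm C}_{\mathcal V}(S)$ terminates at $\Delta(\mathcal{V}) = {\mathbb N}$, so ${\mathbb N} \in {\mathrm C}_{\mathcal V}(S)$, whence $S \in \mathcal{D}({\mathbb N})$; the reverse inclusion $\mathcal{D}({\mathbb N}) \subseteq \mathcal{V}$ is immediate since $\mathcal{D}({\mathbb N}) \subseteq \mathcal{V}$ by definition. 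Everything else reduces to direct appeals to Theorem~\ref{thm35}, Corollary~\ref{cor9}, and Theorem~\ref{thm8}.
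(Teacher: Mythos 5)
Your proof is correct and follows essentially the same route as the paper: Theorem~\ref{thm35} plus Corollary~\ref{cor9} for the first claim, and Theorem~\ref{thm8} with the choice $\Delta={\mathbb N}$ and the identification ${\mathcal D}({\mathbb N})={\mathcal V}$ for the second. The only difference is that you spell out why ${\mathcal D}({\mathbb N})={\mathcal V}$, which the paper dismisses as clear; that added detail is harmless and arguably an improvement.
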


\begin{proof}
	If ${\mathcal V}$ is a variety, then ${\mathcal V}$ is an $R$-variety and, by applying Theoem~\ref{thm35}, we have that ${\mathcal D}(\Delta)$ is an $R$-variety. From Corollary~\ref{cor9}, we conclude that ${\mathcal D}(\Delta,T)$ is an $R$-variety.
	
	If ${\mathcal R}$ is an $R$-variety, by Theorem~\ref{thm8}, we know that there exist a variety ${\mathcal V}$ and a numerical semigroup $T$ such that ${\mathcal R}=\{S\cap T \mid S\in {\mathcal V} \}$. Now, it is clear that ${\mathcal V}={\mathcal D}({\mathbb N}) = \left\{S \mid S \mbox{ is a descendant of ${\mathbb N}$ in } {\mathrm G}({\mathcal V}) \right\}$. Therefore, we have that ${\mathcal R} = \left\{S\cap T \mid S\in {\mathcal D}({\mathbb N}) \right\} = {\mathcal D}({\mathbb N},T)$.
\end{proof}

In the next result we see that the above corollary is also true when we write pseudo-variety instead of variety.

\begin{corollary}\label{cor37}
	Let ${\mathcal P}$ be a pseudo-variety, let $\Delta \in {\mathcal P}$, and let $T$ be a numerical semigroup. Let ${\mathcal D}(\Delta) = \left\{S \mid S \mbox{ is a descendant of } \Delta \mbox{ in } {\mathrm G}({\mathcal P}) \right\}$ and let ${\mathcal D}(\Delta,T) = \left\{S\cap T \mid S \in {\mathcal D}(\Delta) \right\}$. Then ${\mathcal D}(\Delta,T)$ is an $R$-variety. Moreover, every $R$-variety can be obtained in this way.
\end{corollary}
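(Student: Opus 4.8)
The plan is to run the same argument as for Corollary~\ref{cor36}, simply replacing variety with pseudo-variety and ${\mathbb N}$ with the maximum of the pseudo-variety. For the first assertion, I would first note that, by Proposition~\ref{prop2}, the pseudo-variety ${\mathcal P}$ is an $R$-variety; hence the tree ${\mathrm G}({\mathcal P})$ and the notion of descendant (as developed earlier in this section) are available for ${\mathcal P}$. Since $\Delta\in{\mathcal P}$, Theorem~\ref{thm35} gives that ${\mathcal D}(\Delta)$ is an $R$-variety, and then Corollary~\ref{cor9}, applied to the $R$-variety ${\mathcal D}(\Delta)$ and the numerical semigroup $T$, yields that ${\mathcal D}(\Delta,T)=\{S\cap T\mid S\in{\mathcal D}(\Delta)\}$ is an $R$-variety. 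This disposes of the first claim.

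For the converse, I would start from an arbitrary $R$-variety ${\mathcal R}$ and invoke Corollary~\ref{cor10} to write ${\mathcal R}=\{S\cap T\mid S\in{\mathcal P}\}$ for a suitable pseudo-variety ${\mathcal P}$ and numerical semigroup $T$. The key step is to choose $\Delta=\Delta({\mathcal P})$, the maximum of ${\mathcal P}$, which certainly lies in ${\mathcal P}$. By Proposition~\ref{prop27}, $\Delta({\mathcal P})$ is the root of ${\mathrm G}({\mathcal P})$, so every vertex of the tree is a descendant of it; consequently ${\mathcal D}(\Delta({\mathcal P}))={\mathcal P}$. Substituting this identity, I obtain ${\mathcal R}=\{S\cap T\mid S\in{\mathcal D}(\Delta({\mathcal P}))\}={\mathcal D}(\Delta({\mathcal P}),T)$, which exhibits ${\mathcal R}$ in the desired form.

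I do not expect a serious obstacle here, since all the heavy lifting was already done in Theorems~\ref{thm8} and \ref{thm35} and in Corollaries~\ref{cor9} and \ref{cor10}; the corollary is essentially a re-packaging of Corollary~\ref{cor36}. The only point deserving care is that the tree invoked for ${\mathcal P}$ must be the one attached to ${\mathcal P}$ in its guise as an $R$-variety (so that edges are given by ${\mathrm F}_{\Delta({\mathcal P})}$, not by the unrestricted Frobenius number), and that the single new ingredient compared with Corollary~\ref{cor36} is the substitution of $\Delta({\mathcal P})$ for the role played there by ${\mathbb N}$. This substitution is exactly what is needed, because a pseudo-variety need not contain ${\mathbb N}$ but always contains---and is rooted at---its own maximum.
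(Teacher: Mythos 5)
Your argument is correct. The first half coincides exactly with the paper's proof: Proposition~\ref{prop2} makes ${\mathcal P}$ an $R$-variety, Theorem~\ref{thm35} makes ${\mathcal D}(\Delta)$ an $R$-variety, and Corollary~\ref{cor9} finishes. For the converse the paper takes a slightly shorter route: it simply cites Corollary~\ref{cor36}, observing that every variety is a pseudo-variety, so the representation ${\mathcal R}={\mathcal D}({\mathbb N},T)$ already obtained there (with ${\mathcal D}({\mathbb N})$ computed inside a variety ${\mathcal V}$) is of the required form. You instead invoke Corollary~\ref{cor10} to write ${\mathcal R}=\{S\cap T\mid S\in{\mathcal P}\}$ for a pseudo-variety ${\mathcal P}$ and then note that ${\mathcal D}(\Delta({\mathcal P}))={\mathcal P}$, since every vertex of the tree ${\mathrm G}({\mathcal P})$ descends from its root $\Delta({\mathcal P})$. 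Both reductions are one-liners and both are valid; yours has the small advantage of producing a representation based on a genuine pseudo-variety (rather than falling back on the variety case with $\Delta={\mathbb N}$), and your closing remark about the tree of ${\mathcal P}$ being the one defined via ${\mathrm F}_{\Delta({\mathcal P})}$ is a legitimate point of care, harmless here because ${\mathrm F}_{\Delta({\mathcal P})}(S)={\mathrm F}(S)$ for $S\in{\mathcal P}$, $S\neq\Delta({\mathcal P})$, as shown in the proof of Proposition~\ref{prop2}.
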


\begin{proof}
	If ${\mathcal P}$ is a pseudo-variety, then ${\mathcal P}$ is an $R$-variety and, by applying Theorem~\ref{thm35}, we have that ${\mathcal D}(\Delta)$ is an $R$-variety as well. Now, from Corollary~\ref{cor9}, we have that ${\mathcal D}(\Delta,T)$ is an $R$-variety.
	
	That every $R$-variety can be obtained in this way is an immediate consequence of Corollary~\ref{cor36} and having in mind that each variety is a pseudo-variety.
\end{proof}

We conclude this section by illustrating the above corollary with an example.

\begin{example}\label{exmp38}
	Let ${\mathcal P}$ the pseudo-variety which appear in Example~\ref{exmp11}. In \cite[Example~7]{pseudovar} it is shown that ${\mathrm G}({\mathcal P})$ is given by the next subtree.
	\begin{center}
		\begin{picture}(239,155)
		%\put(0,0){A} \put(0,155){B} \put(232,0){C} \put(232,155){D} \put(0,30){E} \put(232,120){F}
		\put(158,150){$\langle 5,6,8,9 \rangle$}
		\put(140,130){\vector(2,1){30}} \put(215,130){\vector(-2,1){30}}
		\put(103,120){$\langle 5,6,9,13 \rangle$} \put(200,120){$\langle 5,6,8 \rangle$}
		\put(85,100){\vector(2,1){30}} \put(165,100){\vector(-2,1){30}}
		\put(48,90){$\langle 5,6,13,14 \rangle$} \put(150,90){$\langle 5,6,9 \rangle$}
		\put(30,70){\vector(2,1){30}} \put(115,70){\vector(-2,1){30}}
		\put(8,60){$\langle 5,6,14 \rangle$} \put(100,60){$\langle 5,6,13 \rangle$}
		\put(25,40){\vector(0,1){15}}
		\put(8,30){$\langle 5,6,19 \rangle$}
		\put(25,10){\vector(0,1){15}}
		\put(14,0){$\langle 5,6 \rangle$}
		\end{picture}
	\end{center}
	
	By applying Corollary~\ref{cor37}, we have that, if $T$ is a numerical semigroup, then ${\mathcal R} = \left\{S\cap T \mid S\in{\mathcal D}(\langle 5,6,13,14 \rangle) \right\}$ is an $R$-variety. Let us observe that ${\mathcal D}(\langle 5,6,13,14 \rangle) =\left\{ \langle 5,6,13,14 \rangle \right.$, $\langle 5,6,14 \rangle$, $\langle 5,6,13 \rangle$, $\langle 5,6,19 \rangle$, $\left. \langle 5,6\rangle \right\}$.
\end{example}

\section{The smallest $R$-variety containing a family of numerical semigroups}\label{r-var-fns}

In \cite[Proposition~2]{variedades} it is proved that the intersection of varieties is a variety. As a consequence of this, we have that there exists the smallest variety which contains a given family of numerical semigroups.

On the other hand, in \cite{pseudovar} was shown that, in general, the intersection of pseudo-varieties is not a pseudo-variety. Nevertheless, in \cite[Theorem~4]{pseudovar} it is proved that there exists the smallest pseudo-variety which contains a given family of numerical semigroups.

Our first objective in this section will be to show that, in general, we cannot talk about the smallest $R$-variety which contains a given family of numerical semigroups.

\begin{lemma}\label{lem39}
	Let ${\mathcal F}$ be a family of numerical semigroups and let $\Delta$ be a numerical semigroup such that $S\subseteq \Delta$ for all $S \in {\mathcal F}$. Then there exists an $R$-variety ${\mathcal R}$ such that ${\mathcal F} \subseteq {\mathcal R}$ and $\max({\mathcal R}) = \Delta$.
\end{lemma}

\begin{proof}
	Let ${\mathcal R}=\left\{S \mid S \right.$ is a numerical semigroup and $\left. S \subseteq \Delta \right\}$. From Item~1 in Example~\ref{exmp3}, we have that ${\mathcal R}$ is an $R$-variety. Now, it is trivial that ${\mathcal F} \subseteq {\mathcal R}$ and $\max({\mathcal R}) = \Delta$.
\end{proof}

The proof of the next lemma is straightforward and we can omit it.

\begin{lemma}\label{lem40}
	Let $\{{\mathcal R}_i\}_{i\in I}$ be a family of $R$-varieties such that $\max({\mathcal R}_i)=\Delta$ for all $i \in I$. Then $\bigcap_{i \in I} {\mathcal R}_i$ is an $R$-variety and $\max\left(\bigcap_{i \in I} {\mathcal R}_i\right)=\Delta$.
\end{lemma}

The following result says us that there exists the smallest $R$-variety which contains a given family of numerical semigroups and has a certain maximum.

\begin{proposition}\label{prop41}
	Let ${\mathcal F}$ be a family of numerical semigroups and let $\Delta$ be a numerical semigroup such that $S\subseteq \Delta$ for all $S \in {\mathcal F}$. Then there exists the smallest $R$-variety which contains ${\mathcal F}$ and with maximum equal to $\Delta$.
\end{proposition}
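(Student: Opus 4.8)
The plan is to combine Lemma~\ref{lem39} and Lemma~\ref{lem40} in the standard way one proves existence of a smallest object closed under intersection. First I would fix the family ${\mathcal F}$ and the numerical semigroup $\Delta$ with $S\subseteq\Delta$ for all $S\in{\mathcal F}$, and consider the collection
$$\Lambda = \left\{{\mathcal R} \mid {\mathcal R} \mbox{ is an } R\mbox{-variety}, \; {\mathcal F}\subseteq{\mathcal R}, \mbox{ and } \max({\mathcal R})=\Delta \right\}.$$
By Lemma~\ref{lem39} this collection is non-empty, which is the essential hypothesis that makes the argument go through.

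Next I would define ${\mathcal R}({\mathcal F},\Delta) = \bigcap_{{\mathcal R}\in\Lambda}{\mathcal R}$ and verify that it is itself a member of $\Lambda$. Since every ${\mathcal R}\in\Lambda$ has maximum equal to the single fixed numerical semigroup $\Delta$, Lemma~\ref{lem40} applies directly and tells us that ${\mathcal R}({\mathcal F},\Delta)$ is an $R$-variety with $\max({\mathcal R}({\mathcal F},\Delta))=\Delta$. It remains to check that ${\mathcal F}\subseteq{\mathcal R}({\mathcal F},\Delta)$; this is immediate because ${\mathcal F}\subseteq{\mathcal R}$ for every ${\mathcal R}\in\Lambda$, so ${\mathcal F}$ is contained in their intersection. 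Hence ${\mathcal R}({\mathcal F},\Delta)\in\Lambda$.

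Finally I would observe that ${\mathcal R}({\mathcal F},\Delta)$ is the smallest element of $\Lambda$ with respect to inclusion: if ${\mathcal R}$ is any $R$-variety containing ${\mathcal F}$ with maximum $\Delta$, then ${\mathcal R}\in\Lambda$ and therefore ${\mathcal R}({\mathcal F},\Delta)=\bigcap_{{\mathcal R}'\in\Lambda}{\mathcal R}'\subseteq{\mathcal R}$. This establishes both existence and minimality, completing the proof.

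I do not expect any genuine obstacle here, since the two lemmas were evidently stated precisely to feed this argument; the only point requiring care is the hypothesis of Lemma~\ref{lem40}, namely that every $R$-variety in the intersected family share the \emph{same} maximum $\Delta$. This is exactly why the statement fixes $\Delta$ in advance and restricts attention to $R$-varieties with that maximum, rather than attempting (as the introduction warns is impossible) to intersect over all $R$-varieties containing ${\mathcal F}$ regardless of their maxima. The role of Lemma~\ref{lem39} is simply to guarantee $\Lambda\neq\emptyset$ so that the intersection is not vacuous.
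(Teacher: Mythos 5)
Your proof is correct and is exactly the paper's argument: intersect all $R$-varieties containing ${\mathcal F}$ with maximum $\Delta$, using Lemma~\ref{lem39} for non-emptiness of the family and Lemma~\ref{lem40} for closure under intersection. The paper states this in two lines; you have merely spelled out the routine verifications.
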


\begin{proof}
	Let ${\mathcal R}$ be the intersection of all the $R$-varieties containing ${\mathcal F}$ and with maximum equal to $\Delta$. From Lemmas~\ref{lem39} and \ref{lem40} we have the conclusion.
\end{proof}

We will denote by ${\mathcal R}({\mathcal F},\Delta)$ the $R$-variety given by Proposition~\ref{prop41}. Now we are interested in describe the elements of such an $R$-variety.

\begin{lemma}\label{lem42}
	Let $S_1,S_2,\ldots,S_n,\Delta$ be numerical semigroups such that $S_i \subseteq \Delta$ for all $i\in \{1,\ldots,n \}$. Then ${\mathrm F}_\Delta(S_1\cap\cdots\cap S_n) = \max\left\{ {\mathrm F}_\Delta(S_1),\ldots,{\mathrm F}_\Delta(S_n) \right\}$.
\end{lemma}

\begin{proof}
	We have that ${\mathrm F}_\Delta(S_1\cap\cdots\cap S_n) = \max\left(\Delta\setminus (S_1\cap\cdots\cap S_n) \right) =$
	
	\noindent $\max\left( (\Delta\setminus S_1) \cup\cdots\cup (\Delta\setminus S_n) \right) = \max\left\{ \max(\Delta\setminus S_1), \ldots, \max(\Delta\setminus S_n) \right\} =$
	
	\noindent $\max\left\{ {\mathrm F}_\Delta(S_1),\ldots,{\mathrm F}_\Delta(S_n) \right\}.$
\end{proof}

Let us recall that, if $S$ and $\Delta$ are numerical semigroups such that $S\subseteq \Delta$, then we defined ${\mathrm C}(S,\Delta)$ in Example~\ref{exmp19} (that is, the chain of $S$ restricted to $\Delta$). If ${\mathcal F}$ is a family of numerical semigroups such that $S\subseteq \Delta$ for all $S \in {\mathcal F}$, then we will denote by ${\mathrm C}({\mathcal F},\Delta)$ the set $\bigcup_{S \in {\mathcal F}}{\mathrm C}(S,\Delta)$.

\begin{theorem}\label{thm43}
	Let ${\mathcal F}$ be a family of numerical semigroups and let $\Delta$ be a numerical semigroup such that $S\subseteq \Delta$ for all $S\in {\mathcal F}$. Then ${\mathcal R}({\mathcal F},\Delta)$ is the set formed by all the finite intersections of elements in ${\mathrm C}({\mathcal F},\Delta)$.
\end{theorem}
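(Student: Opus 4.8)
The plan is to show the two inclusions between $\mathcal{R}(\mathcal{F},\Delta)$ and the set $\mathcal{A}$ of all finite intersections of elements of $\mathrm{C}(\mathcal{F},\Delta)$. First I would verify that $\mathcal{A}$ is itself an $R$-variety with maximum $\Delta$ that contains $\mathcal{F}$; since $\mathcal{R}(\mathcal{F},\Delta)$ is by definition (Proposition~\ref{prop41}) the smallest such $R$-variety, this immediately yields $\mathcal{R}(\mathcal{F},\Delta) \subseteq \mathcal{A}$. For the reverse inclusion, I would observe that any $R$-variety containing $\mathcal{F}$ with maximum $\Delta$ must contain every element of each chain $\mathrm{C}(S,\Delta)$ for $S \in \mathcal{F}$ (by repeatedly applying condition~(3) in the definition of $R$-variety), hence must contain all of $\mathrm{C}(\mathcal{F},\Delta)$, and then by closure under intersection (condition~(2)) must contain all finite intersections; applying this to $\mathcal{R}(\mathcal{F},\Delta)$ itself gives $\mathcal{A} \subseteq \mathcal{R}(\mathcal{F},\Delta)$.

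The substantive work is checking that $\mathcal{A}$ is an $R$-variety. Closure under finite intersection is immediate, since an intersection of finite intersections of elements of $\mathrm{C}(\mathcal{F},\Delta)$ is again one. That $\Delta \in \mathcal{A}$ and that $\Delta$ is the maximum follows because $\Delta = S_{n}$ is the last term of every chain $\mathrm{C}(S,\Delta)$, so $\Delta \in \mathrm{C}(\mathcal{F},\Delta) \subseteq \mathcal{A}$, and every element of $\mathcal{A}$ is a subset of $\Delta$. That $\mathcal{F} \subseteq \mathcal{A}$ holds because each $S \in \mathcal{F}$ is the term $S_{0}$ of its own chain. The one genuinely delicate axiom is condition~(3): given $M = T_{1} \cap \cdots \cap T_{k} \in \mathcal{A}$ with each $T_{j} \in \mathrm{C}(\mathcal{F},\Delta)$ and $M \neq \Delta$, I must show $M \cup \{\mathrm{F}_{\Delta}(M)\} \in \mathcal{A}$.

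The key to condition~(3) is Lemma~\ref{lem42}, which gives $\mathrm{F}_{\Delta}(M) = \max\{\mathrm{F}_{\Delta}(T_{1}),\ldots,\mathrm{F}_{\Delta}(T_{k})\}$. Reindexing, I may assume this maximum is attained at $T_{1}$, so $\mathrm{F}_{\Delta}(M) = \mathrm{F}_{\Delta}(T_{1})$. The plan is then to replace $T_{1}$ by its successor $T_{1}' = T_{1} \cup \{\mathrm{F}_{\Delta}(T_{1})\}$ in the chain, which still lies in $\mathrm{C}(\mathcal{F},\Delta)$, and to argue that
\[
M \cup \{\mathrm{F}_{\Delta}(M)\} = T_{1}' \cap T_{2} \cap \cdots \cap T_{k},
\]
placing the left-hand side in $\mathcal{A}$. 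The containment $\subseteq$ holds because $\mathrm{F}_{\Delta}(M) = \mathrm{F}_{\Delta}(T_1) \in T_1'$ and $\mathrm{F}_{\Delta}(M) \in T_j$ for $j \geq 2$ (since $\mathrm{F}_{\Delta}(M) > \mathrm{F}_{\Delta}(T_j)$ forces $\mathrm{F}_{\Delta}(M) \in T_j$, while equality cannot occur for $j\geq 2$ as the maximum is unique among the chains once we note that if $\mathrm{F}_{\Delta}(T_j)=\mathrm{F}_{\Delta}(T_1)$ the same argument applies symmetrically). For $\supseteq$, any element of the right-hand intersection either lies in $M$ or equals $\mathrm{F}_{\Delta}(T_1) = \mathrm{F}_{\Delta}(M)$, since $T_1'$ differs from $T_1$ only by that single element.

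I expect this last equality of sets to be the main obstacle, since one must carefully track which element is adjoined when passing to the successor in the restricted chain and confirm that adjoining $\mathrm{F}_{\Delta}(M)$ to $M$ corresponds exactly to advancing the single factor $T_{1}$ that realizes the maximum restricted Frobenius number, with all other factors left untouched. Once this is established, both inclusions follow cleanly and the theorem is proved.
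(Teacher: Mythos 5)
Your overall strategy is exactly the paper's: let $\mathcal{A}$ be the set of finite intersections of elements of $\mathrm{C}(\mathcal{F},\Delta)$, show $\mathcal{A}\subseteq\mathcal{R}(\mathcal{F},\Delta)$ by closure of any candidate $R$-variety under the chain construction and under intersections, and show $\mathcal{R}(\mathcal{F},\Delta)\subseteq\mathcal{A}$ by verifying that $\mathcal{A}$ is itself an $R$-variety containing $\mathcal{F}$ with maximum $\Delta$, the only delicate point being axiom (3) via Lemma~\ref{lem42}. However, your treatment of axiom (3) has a genuine gap: the identity $M\cup\{\mathrm{F}_{\Delta}(M)\}=T_{1}'\cap T_{2}\cap\cdots\cap T_{k}$, obtained by advancing only the single factor $T_{1}$ realizing the maximum, is false whenever the maximum is attained by more than one factor. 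Your parenthetical claim that ``equality cannot occur for $j\geq 2$'' is incorrect: distinct elements of $\mathrm{C}(\mathcal{F},\Delta)$ (coming from different chains, or even the same element listed twice) can share the same restricted Frobenius number. Concretely, take $\Delta=\mathbb{N}$, $\mathcal{F}=\{\langle 2,5\rangle,\langle 4,5,6,7\rangle\}$, $T_{1}=\langle 2,5\rangle$ and $T_{2}=\langle 4,5,6,7\rangle$; both have restricted Frobenius number $3$, $M=T_{1}\cap T_{2}=T_{2}$, and $T_{1}'\cap T_{2}=(T_{1}\cup\{3\})\cap T_{2}=T_{2}=M$, which is not $M\cup\{3\}$ because $3\notin T_{2}$.

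The repair is small and is what the paper does: use the distributive identity
\[
M\cup\{\mathrm{F}_{\Delta}(M)\}=\bigl(T_{1}\cup\{\mathrm{F}_{\Delta}(M)\}\bigr)\cap\cdots\cap\bigl(T_{k}\cup\{\mathrm{F}_{\Delta}(M)\}\bigr),
\]
and then check that \emph{each} factor $T_{j}\cup\{\mathrm{F}_{\Delta}(M)\}$ lies in $\mathrm{C}(\mathcal{F},\Delta)$: if $\mathrm{F}_{\Delta}(M)>\mathrm{F}_{\Delta}(T_{j})$ then $\mathrm{F}_{\Delta}(M)\in\Delta$ already belongs to $T_{j}$, so the factor equals $T_{j}$; if $\mathrm{F}_{\Delta}(M)=\mathrm{F}_{\Delta}(T_{j})$ the factor is the successor of $T_{j}$ in its chain. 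In other words, you must advance every factor that attains the maximum simultaneously, not just one of them. With that correction the rest of your argument goes through and coincides with the paper's proof.
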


\begin{proof}
	Let ${\mathcal R} = \left\{S_1\cap \cdots \cap S_n \mid n\in {\mathbb N} \setminus \{0\} \mbox{ and } S_1,\ldots,S_n \in {\mathrm C}({\mathcal F},\Delta) \right\}$. Having in mind that ${\mathcal R}({\mathcal F},\Delta)$ is an $R$-variety which contains ${\mathcal F}$ and with maximum equal to $\Delta$, we easily deduce that ${\mathcal R} \subseteq {\mathcal R}({\mathcal F},\Delta)$.
	
	Let us see now that ${\mathcal R}$ is an $R$-variety. On the one hand, it is clear that $\Delta=\max({\mathcal R})$ and that, if $S,T\in {\mathcal R}$, then $S\cap T \in {\mathcal R}$. On the other hand, let $S\in {\mathcal R}$ such that $S\not= \Delta$. Then $S=S_1\cap \cdots\cap S_n$ for some $S_1,\ldots,S_n \in {\mathrm C}({\mathcal F},\Delta)$. Now, from Lemma~\ref{lem42}, we have that ${\mathrm F}_\Delta(S) = \max\left\{ {\mathrm F}_\Delta(S_1),\ldots,{\mathrm F}_\Delta(S_n) \right\}$ and, thus, ${\mathrm F}_\Delta(S_i) \leq {\mathrm F}_\Delta(S)$ for all $i\in \{1,\ldots,n\}$. Let us observe that, if ${\mathrm F}_\Delta(S) > {\mathrm F}_\Delta(S_i)$, then $S_i \cup \{{\mathrm F}_\Delta(S)\} = S_i$. Moreover, if ${\mathrm F}_\Delta(S) = {\mathrm F}_\Delta(S_i)$, then we get $S_i \cup \{{\mathrm F}_\Delta(S)\} = S_i \cup \{{\mathrm F}_\Delta(S_i)\} \in {\mathrm C}({\mathcal F},\Delta)$. Therefore, $S_i \cup \{{\mathrm F}_\Delta(S)\} \in {\mathrm C}({\mathcal F},\Delta)$ for all $i\in \{1,\ldots,n\}$. Since $S \cup \{{\mathrm F}_\Delta(S)\} =(S_1 \cup \{{\mathrm F}_\Delta(S)\}) \cap \cdots \cap (S_n \cup \{{\mathrm F}_\Delta(S)\})$, then $S \cup \{{\mathrm F}_\Delta(S)\} \in {\mathcal R}$. Consequently, ${\mathcal R}$ is an $R$-variety.
	
	Finally, since ${\mathcal R}$ is an $R$-variety which contains ${\mathcal F}$ and with maximum equal to $\Delta$, then ${\mathcal R}({\mathcal F},\Delta) \subseteq {\mathcal R}$ and, thereby, we conclude that ${\mathcal R}={\mathcal R}({\mathcal F},\Delta)$.
\end{proof}

Let us observe that, if ${\mathcal F}$ is a finite family, then ${\mathrm C}({\mathcal F},\Delta)$ is a finite set and, therefore, ${\mathcal R}({\mathcal F},\Delta)$ is a finite $R$-variety.

\begin{lemma}\label{lem44}
	Let ${\mathcal R}$ and ${\mathcal R}'$ be two $R$-varieties. If ${\mathcal R} \subseteq {\mathcal R}'$, then $\Delta({\mathcal R}) \subseteq \Delta({\mathcal R}')$.
\end{lemma}

\begin{proof}
	If ${\mathcal R} \subseteq {\mathcal R}'$, then $\Delta({\mathcal R}) \in {\mathcal R}'$. Therefore, $\Delta({\mathcal R}) \subseteq \Delta({\mathcal R}')$.
\end{proof}

The next example shows us that, in general, we cannot talk about the smallest $R$-variety which contains a given family of numerical semigroups.

\begin{example}\label{exmp45}
	Let ${\mathcal F}=\left\{\langle 5,6 \rangle, \langle 5,7 \rangle \right\}$. As a consequence of Lemma~\ref{lem44}, the candidate to be the smallest $R$-variety which contains ${\mathcal F}$ must have as maximum the numerical semigroup $\langle 5,6,7 \rangle$ (that is, the smallest numerical semigroup containing $\langle 5,6 \rangle$ and $\langle 5,7 \rangle$). Thus, the candidate to be the smallest $R$-variety which contains ${\mathcal F}$ is ${\mathcal R}({\mathcal F},\langle 5,6,7 \rangle)$.
	
	Let us see now that ${\mathcal R}({\mathcal F},\langle 5,6,7 \rangle) \not\subseteq {\mathcal R}({\mathcal F},\langle 5,6,7,8 \rangle)$ and, in this way, that there does not exist the smallest $R$-variety which contains ${\mathcal F}$. In order to do it, we will show that $\langle 5,6,7 \rangle \notin {\mathcal R}({\mathcal F},\langle 5,6,7,8 \rangle)$. In fact, by applying Theorem~\ref{thm43}, if $\langle 5,6,7 \rangle \in {\mathcal R}({\mathcal F},\langle 5,6,7,8 \rangle)$, then we deduce that there exist $S_1 \in {\mathrm C}(\langle 5,6 \rangle, \langle 5,6,7,8 \rangle)$ and $S_2 \in {\mathrm C}(\langle 5,7 \rangle, \langle 5,6,7,8 \rangle)$ such that $S_1 \cap S_2 = \langle 5,6,7 \rangle$. Since $S_1 \in {\mathrm C}(\langle 5,6 \rangle, \langle 5,6,7,8 \rangle)$, then there exists $n_1\in {\mathbb N}$ such that $S_1=\langle 5,6 \rangle \cup \{x \in \langle 5,6,7,8 \rangle \mid x \geq n_1\}$. Moreover, $\langle 5,6,7 \rangle \subseteq S_1$ and, thereby, $n_1\leq 7$. Consequently, $8\in S_1$. By an analogous reasoning, we have that $8\in S_2$ too. Consequently, $8\in S_1 \cap S_2 = \langle 5,6,7 \rangle$, which is false.
\end{example}

Let ${\mathcal R}$ be an $R$-variety. We will say that ${\mathcal F}$ (subset of ${\mathcal R}$) is a \emph{system of generators} of ${\mathcal R}$ if ${\mathcal R} = {\mathcal R}({\mathcal F},\Delta({\mathcal R}))$. Let us observe that, in such a case, ${\mathcal R}$ is the smallest $R$-variety which contains ${\mathcal F}$ and with maximum equal to $\Delta({\mathcal R})$.

We will say that an $R$-variety, ${\mathcal R}$, is finitely generated if there exists a finite set ${\mathcal F} \subseteq {\mathcal R}$ such that ${\mathcal R} = {\mathcal R}({\mathcal F},\Delta({\mathcal R}))$ (that is, if ${\mathcal R}$ has a finite system of generators). As a consequence of Theorem~\ref{thm43}, we have the following result.

\begin{corollary}\label{cor46}
	An $R$-variety is finitely generated if and only if it is finite.
\end{corollary}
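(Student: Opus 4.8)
The plan is to prove the two implications of Corollary~\ref{cor46} separately, leaning on Theorem~\ref{thm43} for the harder direction. The statement claims that an $R$-variety ${\mathcal R}$ is finitely generated if and only if it is finite, where ``finitely generated'' means ${\mathcal R} = {\mathcal R}({\mathcal F}, \Delta({\mathcal R}))$ for some finite ${\mathcal F} \subseteq {\mathcal R}$.

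First I would dispatch the implication that finitely generated implies finite. Suppose ${\mathcal R} = {\mathcal R}({\mathcal F}, \Delta({\mathcal R}))$ with ${\mathcal F}$ finite. By Theorem~\ref{thm43}, every element of ${\mathcal R}$ is a finite intersection of elements of ${\mathrm C}({\mathcal F}, \Delta({\mathcal R})) = \bigcup_{S\in {\mathcal F}} {\mathrm C}(S, \Delta({\mathcal R}))$. Since ${\mathcal F}$ is finite and each chain ${\mathrm C}(S, \Delta({\mathcal R}))$ is finite (it has exactly ${\mathrm g}(S) - {\mathrm g}(\Delta({\mathcal R})) + 1$ elements, as noted in Example~\ref{exmp19}), the set ${\mathrm C}({\mathcal F}, \Delta({\mathcal R}))$ is a finite collection of numerical semigroups. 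A finite set has only finitely many subsets, hence only finitely many possible intersections, so ${\mathcal R}$ is finite. This is precisely the observation already recorded in the remark immediately following Theorem~\ref{thm43}.

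For the converse, that finite implies finitely generated, I would take ${\mathcal F} = {\mathcal R}$ itself. Since ${\mathcal R}$ is finite, ${\mathcal F}$ is a finite subset of ${\mathcal R}$, and trivially $S \subseteq \Delta({\mathcal R})$ for all $S \in {\mathcal F}$. It remains to check that ${\mathcal R} = {\mathcal R}({\mathcal R}, \Delta({\mathcal R}))$. One inclusion is immediate: ${\mathcal R} \subseteq {\mathcal R}({\mathcal R}, \Delta({\mathcal R}))$ since the latter contains ${\mathcal F} = {\mathcal R}$ by definition. For the reverse inclusion, ${\mathcal R}$ is itself an $R$-variety containing ${\mathcal F} = {\mathcal R}$ with maximum $\Delta({\mathcal R})$, so by the minimality property of ${\mathcal R}({\mathcal F}, \Delta)$ from Proposition~\ref{prop41} we get ${\mathcal R}({\mathcal R}, \Delta({\mathcal R})) \subseteq {\mathcal R}$. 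Hence equality holds and ${\mathcal R}$ is finitely generated by the finite set ${\mathcal R}$.

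I do not expect any serious obstacle here, as both directions are short consequences of results already established. The only point requiring a little care is the finiteness-counting argument in the forward direction: one must be explicit that finitely many building blocks in ${\mathrm C}({\mathcal F}, \Delta({\mathcal R}))$ yield finitely many finite intersections, which follows because the number of nonempty subsets of a finite set is finite and each subset determines a single intersection. The converse is essentially formal, using only that ${\mathcal R}$ trivially generates itself together with the minimality in Proposition~\ref{prop41}.
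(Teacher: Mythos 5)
Your proof is correct and follows essentially the same route as the paper: the forward direction is exactly the observation recorded after Theorem~\ref{thm43} (finitely many elements in ${\mathrm C}({\mathcal F},\Delta)$ yield finitely many intersections), and the converse is the formal argument that a finite ${\mathcal R}$ generates itself via the minimality in Proposition~\ref{prop41}. No issues.
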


From now on, ${\mathcal F}$ will denote a family of numerical semigroups and $\Delta$ will denote a numerical semigroup such that $S\subseteq \Delta$ for all $S\in {\mathcal F}$. Our purpose is to give a method in order to compute the minimal ${\mathcal R}({\mathcal F},\Delta)$-system of generators of a ${\mathcal R}({\mathcal F},\Delta)$-monoid by starting from ${\mathcal F}$ and $\Delta$.

If $A\subseteq \Delta$, then for each $S \in {\mathcal F}$ we define
$$\alpha(S) = \left\{ \begin{array}{l} S, \, \mbox{ if } A \subseteq S, \\[3pt] S \cup \{x\in \Delta \mid x\geq x_S \}, \, \mbox{ if } A \nsubseteq S, \end{array} \right.$$
where $x_S = \min\{ a\in A \mid a \notin S\}$.

As a consequence of Lemma~\ref{lem13} and Theorem~\ref{thm43}, we have the next result.

\begin{lemma}\label{lem47}
	The ${\mathcal R}({\mathcal F},\Delta)$-monoid generated by $A$ is $\bigcap_{S \in {\mathcal F}} \alpha(S)$.
\end{lemma}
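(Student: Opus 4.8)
The plan is to prove the claimed equality by establishing the two inclusions separately, using Lemma~\ref{lem13} to view the ${\mathcal R}({\mathcal F},\Delta)$-monoid generated by $A$, which I denote ${\mathcal R}({\mathcal F},\Delta)(A)$, as the intersection of all members of ${\mathcal R}({\mathcal F},\Delta)$ that contain $A$, and Theorem~\ref{thm43} to express those members as finite intersections of elements of ${\mathrm C}({\mathcal F},\Delta)$. The one structural fact I rely on throughout is that, by the construction in Example~\ref{exmp19}, the elements of ${\mathrm C}(S,\Delta)$ are exactly the numerical semigroups of the form $S\cup\{x\in\Delta\mid x\geq n\}$ with $n\in{\mathbb N}$. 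In particular each $\alpha(S)$ lies in ${\mathrm C}(S,\Delta)\subseteq{\mathrm C}({\mathcal F},\Delta)$: when $A\subseteq S$ we have $\alpha(S)=S$, the first term of the chain, and otherwise $\alpha(S)=S\cup\{x\in\Delta\mid x\geq x_S\}$ is already of the displayed form.

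For the inclusion ${\mathcal R}({\mathcal F},\Delta)(A)\subseteq\bigcap_{S\in{\mathcal F}}\alpha(S)$, I would first observe that $\bigcap_{S\in{\mathcal F}}\alpha(S)$ is an ${\mathcal R}({\mathcal F},\Delta)$-monoid, being an intersection of elements of ${\mathcal R}({\mathcal F},\Delta)$ (recall each $\alpha(S)\in{\mathrm C}({\mathcal F},\Delta)\subseteq{\mathcal R}({\mathcal F},\Delta)$). Next I would check that $A\subseteq\alpha(S)$ for every $S\in{\mathcal F}$: if $A\subseteq S=\alpha(S)$ this is clear, and if $A\nsubseteq S$ then for $a\in A$ either $a\in S\subseteq\alpha(S)$, or $a\notin S$, whence $a\geq x_S$ by the very definition of $x_S$ and $a\in\Delta$, so $a\in\{x\in\Delta\mid x\geq x_S\}\subseteq\alpha(S)$. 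Since ${\mathcal R}({\mathcal F},\Delta)(A)$ is by definition the smallest ${\mathcal R}({\mathcal F},\Delta)$-monoid containing $A$, this inclusion follows at once.

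The reverse inclusion carries the real content. By Lemma~\ref{lem13} it is enough to show $\bigcap_{S\in{\mathcal F}}\alpha(S)\subseteq T$ for every $T\in{\mathcal R}({\mathcal F},\Delta)$ with $A\subseteq T$. By Theorem~\ref{thm43} such a $T$ factors as $T=T_1\cap\cdots\cap T_m$ with $T_j\in{\mathrm C}({\mathcal F},\Delta)$, say $T_j\in{\mathrm C}(S_j,\Delta)$ for some $S_j\in{\mathcal F}$, so $T_j=S_j\cup\{x\in\Delta\mid x\geq n_j\}$ for some $n_j\in{\mathbb N}$. Since $S_j\in{\mathcal F}$, the factor $\alpha(S_j)$ appears in the intersection $\bigcap_{S\in{\mathcal F}}\alpha(S)$, so it suffices to prove $\alpha(S_j)\subseteq T_j$ for each $j$; intersecting over $j$ then gives $\bigcap_{S\in{\mathcal F}}\alpha(S)\subseteq\bigcap_j T_j=T$. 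Thus everything reduces to the key step: if $S\in{\mathcal F}$ and $A\subseteq S\cup\{x\in\Delta\mid x\geq n\}$, then $\alpha(S)\subseteq S\cup\{x\in\Delta\mid x\geq n\}$.

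I expect this key step to be the main obstacle, and it is resolved by exploiting the minimality encoded in $x_S$. If $A\subseteq S$ there is nothing to prove, since $\alpha(S)=S$. If $A\nsubseteq S$, then $x_S\in A\subseteq S\cup\{x\in\Delta\mid x\geq n\}$ while $x_S\notin S$, which forces $x_S\geq n$; consequently $\{x\in\Delta\mid x\geq x_S\}\subseteq\{x\in\Delta\mid x\geq n\}$ and therefore $\alpha(S)=S\cup\{x\in\Delta\mid x\geq x_S\}\subseteq S\cup\{x\in\Delta\mid x\geq n\}$, as needed. Combining the two inclusions yields ${\mathcal R}({\mathcal F},\Delta)(A)=\bigcap_{S\in{\mathcal F}}\alpha(S)$.
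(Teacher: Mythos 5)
Your proof is correct and follows exactly the route the paper intends: the paper states this lemma without proof as ``a consequence of Lemma~\ref{lem13} and Theorem~\ref{thm43},'' and your argument is precisely the detailed verification of that claim, including the two key observations that each $\alpha(S)$ lies in ${\mathrm C}(S,\Delta)$ and contains $A$, and that $A\subseteq S\cup\{x\in\Delta\mid x\geq n\}$ forces $x_S\geq n$ and hence $\alpha(S)\subseteq S\cup\{x\in\Delta\mid x\geq n\}$.
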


Recalling that ${\mathcal R}({\mathcal F},\Delta)(A)$ denotes the ${\mathcal R}({\mathcal F},\Delta)$-monoid generated by $A$, we have the following result.

\begin{proposition}\label{prop48}
	If $A\subseteq \Delta$, then $B=\left\{x_S \mid S \in{\mathcal F} \mbox{ and } A\not\subseteq S \right\}$ is the minimal ${\mathcal R}({\mathcal F},\Delta)$-system of generators of ${\mathcal R}({\mathcal F},\Delta)(A)$.
\end{proposition}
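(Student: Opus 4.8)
The plan is to compute the relevant ${\mathcal R}({\mathcal F},\Delta)$-monoids explicitly via Lemma~\ref{lem47} and then to invoke the characterization of minimal systems in Lemma~\ref{lem16}. Throughout I abbreviate ${\mathcal R}={\mathcal R}({\mathcal F},\Delta)$ and $M={\mathcal R}(A)$, and I write $\alpha_C$ for the map $S\mapsto\alpha(S)$ of the paragraph preceding Lemma~\ref{lem47} when it is formed from a subset $C\subseteq\Delta$ in place of $A$.

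First I would set up a dictionary relating $A$ and $B$. Since each $x_S$ lies in $A$, we have $B\subseteq A$. Next, for every $S\in{\mathcal F}$ one has $A\subseteq S$ if and only if $B\subseteq S$: the forward implication follows from $B\subseteq A$, and for the converse note that $A\not\subseteq S$ forces $x_S\in B$ with $x_S\notin S$, so $B\not\subseteq S$. Finally, whenever $A\not\subseteq S$ I would check that $x_S=\min\{b\in B\mid b\notin S\}$, since $x_S$ itself belongs to this set while any $b\in B\subseteq A$ with $b\notin S$ satisfies $b\geq x_S$ by the very definition of $x_S$. Together these three facts give $\alpha_B(S)=\alpha_A(S)$ for every $S\in{\mathcal F}$, so Lemma~\ref{lem47} yields ${\mathcal R}(B)=\bigcap_{S\in{\mathcal F}}\alpha_B(S)=\bigcap_{S\in{\mathcal F}}\alpha_A(S)={\mathcal R}(A)=M$. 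Hence $B$ is an ${\mathcal R}$-system of generators of $M$.

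For minimality, by Lemma~\ref{lem16} it suffices to prove that $b\notin{\mathcal R}(B\setminus\{b\})$ for each $b\in B$. I would fix $b\in B$, choose $S\in{\mathcal F}$ with $b=x_S$ (so $b\notin S$), and apply Lemma~\ref{lem47} to $B\setminus\{b\}$; it is then enough to exhibit a single $T\in{\mathcal F}$ with $b\notin\alpha_{B\setminus\{b\}}(T)$, and I would take $T=S$. The key observation is that every $b'\in B\setminus\{b\}$ with $b'\notin S$ satisfies $b'>b$, because $b'\in A$ and $b'\notin S$ give $b'\geq x_S=b$ while $b'\neq b$. Hence, if $B\setminus\{b\}\subseteq S$ then $\alpha_{B\setminus\{b\}}(S)=S$, which does not contain $b$; otherwise, setting $y_S=\min\{b'\in B\setminus\{b\}\mid b'\notin S\}$, we have $y_S>b$, so $b\notin S\cup\{x\in\Delta\mid x\geq y_S\}=\alpha_{B\setminus\{b\}}(S)$. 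In either case $b\notin\alpha_{B\setminus\{b\}}(S)$, and therefore $b\notin{\mathcal R}(B\setminus\{b\})$.

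I expect the main obstacle to be precisely this strict inequality $b'>b$ in the minimality step: it is what guarantees that deleting $b$ from $B$ genuinely removes $b$ from the generated ${\mathcal R}$-monoid and prevents the smaller threshold $y_S$ from reabsorbing $b$. Everything else reduces to unwinding the definition of $\alpha$ and the two applications of Lemma~\ref{lem47}.
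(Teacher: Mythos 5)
Your proposal is correct and follows essentially the same route as the paper: both establish ${\mathcal R}({\mathcal F},\Delta)(A)={\mathcal R}({\mathcal F},\Delta)(B)$ via the same dictionary ($A\subseteq S$ iff $B\subseteq S$, and $x_S=\min\{b\in B\mid b\notin S\}$) together with Lemma~\ref{lem47}, and then prove minimality by the same key inequality showing that any element of the generating set not in $S$ and different from $x_S$ must exceed $x_S$, so that $x_S$ escapes $\alpha(S)$. The only cosmetic difference is that you reduce to the sets $B\setminus\{b\}$ via Lemma~\ref{lem16}, whereas the paper argues directly for an arbitrary proper subset $C\varsubsetneq B$.
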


\begin{proof}
	Let us observe that, if $S\in{\mathcal F}$, then $A\subseteq S$ if and only if $B \subseteq S$. Moreover, if $A\not\subseteq S$, then $\min\{a\in A \mid a \notin S \} = \min\{b\in B \mid b \notin S \}$. Therefore, by applying Lemma~\ref{lem47}, we have that ${\mathcal R}({\mathcal F},\Delta)(A)={\mathcal R}({\mathcal F},\Delta)(B)$. Consequently, in order to prove that $B$ is the minimal ${\mathcal R}({\mathcal F},\Delta)$-system of generators of ${\mathcal R}({\mathcal F},\Delta)(A)$, will be enough to see that, if $C\varsubsetneq B$, then
	${\mathcal R}({\mathcal F},\Delta)(C) \not= {\mathcal R}({\mathcal F},\Delta)(A)$.
	
	In effect, if $C\varsubsetneq B$, then there exists $S\in {\mathcal F}$ such that $x_S \notin C$ and, thereby, we have that $C\subseteq S$ or that $\min\{c\in C \mid c\notin S \} > x_S$. Now, by applying once more time Lemma~\ref{lem47}, we easily deduce that $x_S \notin  {\mathcal R}({\mathcal F},\Delta)(C)$. Since $x_S \in B \subseteq A$, then we get that $A \not\subseteq {\mathcal R}({\mathcal F},\Delta)(C)$ and, therefore, ${\mathcal R}({\mathcal F},\Delta)(C) \not= {\mathcal R}({\mathcal F},\Delta)(A)$.
\end{proof}

As an immediate consequence of the above proposition we have the next result.

\begin{corollary}\label{cor49}
	Every ${\mathcal R}({\mathcal F},\Delta)$-monoid has ${\mathcal R}({\mathcal F},\Delta)$-range less than or equal to the cardinality of ${\mathcal F}$.
\end{corollary}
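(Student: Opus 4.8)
The plan is to deduce this directly from Proposition~\ref{prop48}, which already pins down the minimal ${\mathcal R}({\mathcal F},\Delta)$-system of generators of any ${\mathcal R}({\mathcal F},\Delta)$-monoid presented in the form ${\mathcal R}({\mathcal F},\Delta)(A)$. The only preliminary observation I need is that an arbitrary ${\mathcal R}({\mathcal F},\Delta)$-monoid $M$ can indeed be written in this form: since $M$ is an ${\mathcal R}({\mathcal F},\Delta)$-monoid we have $M \subseteq \Delta$, and by Lemma~\ref{lem14}(3) we get $M = {\mathcal R}({\mathcal F},\Delta)(M)$. Thus I may take $A = M$.

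With $A = M$, Proposition~\ref{prop48} tells me that the minimal ${\mathcal R}({\mathcal F},\Delta)$-system of generators of $M$ is exactly the set $B = \left\{ x_S \mid S \in {\mathcal F} \text{ and } M \not\subseteq S \right\}$. By the definition of the ${\mathcal R}({\mathcal F},\Delta)$-range (the cardinality of the minimal ${\mathcal R}({\mathcal F},\Delta)$-system of generators), the range of $M$ is precisely $|B|$, so it remains only to bound $|B|$.

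The final step is a one-line counting argument: the set $B$ is the image of the subset $\left\{ S \in {\mathcal F} \mid M \not\subseteq S \right\}$ of ${\mathcal F}$ under the assignment $S \mapsto x_S$. Since the image of a set under a map has cardinality no larger than the set itself, I obtain $|B| \leq \left| \left\{ S \in {\mathcal F} \mid M \not\subseteq S \right\} \right| \leq |{\mathcal F}|$, which is exactly the claimed bound.

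There is essentially no genuine obstacle here, as all of the substance has been carried out in Proposition~\ref{prop48}; the corollary is a formal consequence. The one point worth flagging is that the assignment $S \mapsto x_S$ need not be injective, since distinct semigroups of ${\mathcal F}$ may yield the same value $x_S$. This is harmless, however: non-injectivity only decreases $|B|$ and hence strengthens the inequality rather than threatening it.
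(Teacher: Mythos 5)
Your proof is correct and is exactly the argument the paper intends: the paper gives no explicit proof, calling the corollary an immediate consequence of Proposition~\ref{prop48}, and your derivation (writing $M={\mathcal R}({\mathcal F},\Delta)(M)$ via Lemma~\ref{lem14}, then bounding the image of $S\mapsto x_S$) is the natural way to make that precise. No issues.
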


We will finish this section by illustrating its content with an example.
	
\begin{example}\label{exmp50}
	Let ${\mathcal F}=\{ \langle 5,7,9,11,13 \rangle, \langle 4,10,11,13 \rangle\}$ and $\Delta = \langle 4,5,7 \rangle$. We are going to compute the tree ${\mathrm G}({\mathcal R}({\mathcal F},\Delta))$.
	
	First of all, to compute the minimal ${\mathcal R}({\mathcal F},\Delta)$-system of generators of $\langle 4,5,7 \rangle$, we apply Proposition~\ref{prop48} with $A=\{4,5,7\}$. Since $x_{\langle 5,7,9,11,13 \rangle}=4$ and $x_{\langle 4,10,11,13 \rangle}=5$, then $\{4,5\}$ is such a minimal ${\mathcal R}({\mathcal F},\Delta)$-system. Now, because ${\mathrm F}_{\Delta}(\langle 4,5,7 \rangle)=-1$, and by applying Theorem~\ref{thm28}, we get that $\langle 4,5,7 \rangle$ has two children, $\langle 4,5,7 \rangle \setminus \{4\}=\langle 5,7,8,9,11 \rangle$ (with ${\mathrm F}_{\Delta}(\langle 5,7,8,9,11 \rangle)=4$) and $\langle 4,5,7 \rangle \setminus \{5\}=\langle 4,7,9,10 \rangle$ (with ${\mathrm F}_{\Delta}(\langle 4,7,9,10 \rangle)=5$).
	
	Now, if we take $A=\{5,7,8,9,11\}$ in Proposition~\ref{prop48}, then we have that $x_{\langle 5,7,9,11,13 \rangle}=8$ and $x_{\langle 4,10,11,13 \rangle}=5$. Thus, we conclude that $\{5,8\}$ is the minimal ${\mathcal R}({\mathcal F},\Delta)$-system of $\langle 5,7,8,9,11 \rangle$. Moreover, since ${\mathrm F}_{\Delta}(\langle 5,7,8,9,11 \rangle)=4$, then Theorem~\ref{thm28} asserts that $\langle 5,7,8,9,11 \rangle \setminus \{5\}=\langle 7,8,9,10,11,12,13 \rangle$ (with ${\mathrm F}_{\Delta}(\langle 7,8,9,10,11,12,13 \rangle)=5$) and $\langle 5,7,8,9,11 \rangle \setminus \{8\}=\langle 5,7,9,11,13 \rangle$ (with ${\mathrm F}_{\Delta}(\langle 5,7,9,11,13 \rangle)=8$) are the two children of $\langle 5,7,8,9,11 \rangle$.
	
	With $A=\{4,7,9,10\}$, we get that $\{4,7\}$ is the minimal ${\mathcal R}({\mathcal F},\Delta)$-system of $\langle 4,7,9,10 \rangle$. By recalling that ${\mathrm F}_{\Delta}(\langle 4,7,9,10 \rangle)=5$, we conclude that $\langle 4,7,9,10 \rangle$ has only one child, that is $\langle 4,7,9,10 \rangle \setminus \{7\}=\langle 4,9,10,11 \rangle$ (with ${\mathrm F}_{\Delta}(\langle 4,9,10,11 \rangle)$ $=7$).
	
	By repeating the above process, we get the whole tree ${\mathrm G}({\mathcal R}({\mathcal F},\Delta))$.
	\begin{center}
		\begin{picture}(335,155)
		%\put(0,0){A} \put(0,155){B} \put(328,0){C} \put(328,155){D} \put(0,30){E} \put(328,60){F}
		\put(229,150){$\langle 4,5,7 \rangle$}
		\put(205,130){\vector(2,1){30}} \put(285,130){\vector(-2,1){30}}
		\put(158,120){$\langle 5,7,8,9,11 \rangle$} \put(275,120){$\langle 4,7,9,10 \rangle$}
		\put(145,100){\vector(2,1){30}} \put(222,100){\vector(-2,1){30}} \put(298,100){\vector(0,1){15}}
		\put(92,90){$\langle 7,8,9,10,11,12,13 \rangle$} \put(190,90){$\langle 5,7,9,11,13 \rangle$} \put(273.5,90){$\langle 4,9,10,11 \rangle$}
		\put(85,70){\vector(2,1){30}} \put(185,70){\vector(-2,1){30}} \put(298,70){\vector(0,1){15}}
		\put(30,60){$\langle 8,9,10,11,12,13,14,15 \rangle$} \put(150,60){$\langle 7,9,10,11,12,13,15 \rangle$} \put(272,60){$\langle 4,10,11,13 \rangle$}
		\put(83,40){\vector(0,1){15}} \put(170,40){\vector(-3,1){45}}
		\put(17,30){$\langle 9,10,11,12,13,14,15,16,17 \rangle$} \put(160,30){$\langle 8,10,11,12,13,15,17 \rangle$}
		\put(83,10){\vector(0,1){15}}
		\put(8,0){$\langle 10,11,12,13,14,15,16,17,18,19 \rangle$}
		\end{picture}
	\end{center}
	
	Now, we are going to represent the vertices of ${\mathrm G}({\mathcal R}({\mathcal F},\Delta))$ using their minimal ${\mathcal R}({\mathcal F},\Delta)$-systems of generators. Moreover, we add to each vertex the corresponding Frobenius number restricted to $\Delta$. Thus, we clarify all the steps to build the tree ${\mathrm G}({\mathcal R}({\mathcal F},\Delta))$.
	\begin{center}
		\begin{picture}(335,155)
		%\put(0,0){A} \put(0,155){B} \put(328,0){C} \put(328,155){D} \put(0,60){E} \put(328,90){F}
		\put(190,150){${\mathcal R}({\mathcal F},\Delta)(\{4,5\}),\,-1$}
		\put(185,130){\vector(2,1){30}} \put(265,130){\vector(-2,1){30}}
		\put(118,120){${\mathcal R}({\mathcal F},\Delta)(\{5,8\}),\,4$} \put(245,120){${\mathcal R}({\mathcal F},\Delta)(\{4,7\}),\, 5$}
		\put(110,100){\vector(2,1){30}} \put(202,100){\vector(-2,1){30}} \put(298,100){\vector(0,1){15}}
		\put(52,90){${\mathcal R}({\mathcal F},\Delta)(\{7,8\}),\,5$} \put(160,90){${\mathcal R}({\mathcal F},\Delta)(\{5\}),\,8$} \put(245,90){${\mathcal R}({\mathcal F},\Delta)(\{4,9\}),\,7$}
		\put(50,70){\vector(2,1){30}} \put(145,70){\vector(-2,1){30}} \put(298,70){\vector(0,1){15}}
		\put(8,60){${\mathcal R}({\mathcal F},\Delta)(\{8,9\}),\,7$} \put(135,60){${\mathcal R}({\mathcal F},\Delta)(\{7\}),\,8$} \put(252,60){${\mathcal R}({\mathcal F},\Delta)(\{4\}),\,9$}
		\put(48,40){\vector(0,1){15}} \put(120,40){\vector(-3,1){45}}
		\put(12,30){${\mathcal R}({\mathcal F},\Delta)(\{9\}),\,8$} \put(100,30){${\mathcal R}({\mathcal F},\Delta)(\{8\}),\,9$}
		\put(48,10){\vector(0,1){15}}
		\put(16,0){${\mathcal R}({\mathcal F},\Delta)(\emptyset),\,9$}
		\end{picture}
	\end{center}
	
\end{example}

\section{Conclusion}

We have been able to give a structure to certain families of numerical semigroups which are not (Frobenius) varieties or (Frobenius) pseudo-varieties. For that we have generalized the concept of Frobenius number to the concept of restricted Frobenius number and, then, we have defined the $R$-varieties (or (Frobenius) restricted variety). 

After studying relations among varieties, pseudo-varieties, and $R$-varieties, we have introduced the concepts of $R$-monoid and minimal $R$-system of generators of a $R$-monoid, which lead to associate a tree with each $R$-variety and, in consequence, obtain recurrently all the elements of an $R$-variety.

Finally, although in general it is not possible to define the smallest $R$-variety that contains a given family ${\mathcal F}$ of numerical semigroups, we have been able to give an alternative when we fix in advance the maximum of the smallest $R$-variety.

%\section*{Acknowledgement}

\end{document}